\newtheorem{theorem}{Theorem}[section]
\newtheorem{lemma}{Lemma}[section]
\newtheorem{example}{Example}[section]
\newtheorem{corollary}{Corollary}[section]
\numberwithin{equation}{section}
\def\d{\displaystyle}
\def\p{\partial}
\begin{document}

\title{Determining superconvergence points for $L2-1_\sigma$ scheme of variable-exponent subdiffusion and error estimate}

\date{\today}

\author[1]{Hongying Huang} 
\author[1,*]{Huili Zhang}
\author[2]{Xiangcheng Zheng} 
%\equalcont{These authors contributed equally to this work.}

\affil[1]{School of Arts and Science, Guangzhou Maritime University, Guangzhou, 510725, Guangdong, China. email: huanghy@lsec.cc.ac.cn  }%

\affil[2]{School of Mathematics, Shandong University, Jinan, 250100, Shandong, China. email: xzheng@sdu.edu.cn} %

\affil[*]{Corresponding author: zhang.huili0203@163.com}

\maketitle

\begin{abstract}
 We develop a numerical scheme for subdiffusion of variable exponent by combining the $L2-1_\sigma$ temporal discretization with finite element spatial approximation.  In existing works, determining the superconvergence points requires solving a nonlinear equation related to the variable exponent at each time step. This work relaxes the selection criterion of superconvergence points without affecting the numerical accuracy, which may reduce the cost of determining superconvergence points. To handle the initial singularity of the solution, we employ a graded temporal mesh.  Then we prove the stability and  error estimates with a convergence rate $O\left(N^{-\min\{r\delta,2\}}+h^{\mu}\right)$ for the $L2-1_\sigma$ scheme of variable-exponent subdiffusion. Numerical results are performed to substantiate the theoretical findings.
\end{abstract} 

{\bf Keywords:}{variable-exponent subdiffusion, Caputo  derivatives, superconvergent points, $L2-1_\sigma$ scheme, finite element method.}

%%\pacs[JEL Classification]{D8, H51}

%\pacs[MSC Classification]{35A01, 65L10, 65L12, 65L20, 65L70}

\section{Introduction}\label{sec1} 

We consider the following subdiffusion of variable exponent \cite{HUANG202343,HUANG2023108559}
\begin{eqnarray}\label{eq:tFDE}
& \ ^C_0D_t^{\alpha(t)}u(\mathbf{x},t)+\mathscr{L}u(\mathbf{x},t)=f(\mathbf{x},t), \ \ (\mathbf{x},t)\in\Omega\times (0,T];\\
& u(\mathbf{x},0)=u_0(\mathbf{x}),\ \mathbf{x}\in\Omega;\quad u(\mathbf{x},t)=0,\ \ (\mathbf{x},t)\in\p\Omega\times[0,T].\label{eq:tFDE1}
 \end{eqnarray}
 Here $\Omega\subset\mathbb{R}^d (d=1,2,3)$ is a simply-connected bounded domain with the piecewise smooth boundary $\p\Omega$ with convex corners, $\mathbf{x}:=(x_1,\cdots,x_d)^T$ with $1\leq d\leq 3$ denotes the spatial variables, $\mathscr{L} := -\nabla\cdot(\mathbf{K}(\mathbf{x})\nabla)$ with $\nabla := (\p/\p x_1,\cdots,\p/\p x_d)^T$ and $\mathbf{K}(\mathbf{x}) := (k_{ij}(\mathbf{x}))_{i,j=1}^d$ is the diffusion tensor. The variable-exponent fractional integral operator $_aI_t^{\alpha(t)}$ and Caputo differential operator $^C_aD_t^{\alpha(t)}$ are defined by
 \begin{equation}\label{eq:caputo}
 _aI_t^{\alpha(t)}g(t) :=\frac{1}{\Gamma(\alpha(t))}\int_a^t\frac{g(s)}{(t-s)^{1-\alpha(t)}}ds,\quad
 ^C_aD_t^{\alpha(t)}g(t)= _aI_t^{1-\alpha(t)}g'(t),
 \end{equation}
where $\alpha(t)\in [0,1)$ is the order of the Caputo fractional derivative.

During the last few decades, the constant-order fractional differential equations have been
widely used in the field of physics, engineering science, and mathematics \cite{METZLER20001,METZLER20006308}. However, some
research findings indicate that variable-exponent fractional differential equations can be used to better describe some complex physical phenomena \cite{Podlubny1999,WANG20191778} including signature verification, algebraic structure, viscoelastic materials, and noise reduction.
 Due to the  weak
singularity kernels and non-local feature of the variable-exponent fractional differential operator, their numerical discretization and simulation  are usually difficult and time-consuming \cite{LI20211011,SUN20201825}.
Consequently, it is meaningful and important to construct efficient numerical algorithm for them.

Umarov and Steinberg proved the existence and uniqueness of solutions for the variable-exponent time-fractional differential equations, supposing the variable order was a piecewise-constant function with respect to
time \cite{UMAROV2009431}. Without this assumption, Wang et al. proved the well-posedness of a variable-exponent linear time-fractional mobile/immobile transport equation, and pointed out that its solution has full regularity or exhibits singularity if the variable exponent has an integer limit or has a non-integer value at the initial time, respectively \cite{WANG20191778}. Subsequently, they further proved the well-posedness of a nonlinear variable-exponent time-fractional differential equations \cite{WANG20192647}.
Recently, Zheng \cite{Zheng2024} developed a convolution
method to study the well-posedness, regularity, an inverse problem and numerical
approximation for the variable-exponent subdiffusion.

In the numerical solution of equations involving a Caputo-type time-fractional derivative, while spatial derivatives are commonly discretized via finite difference or finite element schemes, the chosen method for approximating the Caputo derivative dictates the order of accuracy in the time discretization. For constant-order Caputo fractional derivative, Sun and Wu proposed the  $L1$-formula   \cite{LIN20071533,SUN2006193} with $(2-\alpha)$-order accuracy, which has been widely used for solving the time-fractional differential equations with the Caputo derivative \cite{ALIKHANOV20123938,ALIKHANOV201512,Chen20101740,Taukenova20061785,YANG2022111467}. To further improve the accuracy, Gao et al. developed a $L1-2$
formula with $(3-\alpha)$-order accuracy \cite{GAO201433}. Subsequently, the $L2-1_{\sigma}$ formula \cite{ALIKHANOV2015424} with second-order accuracy in time was proposed by Alikhanov for solving the time-fractional differential equations \cite{GUO2018157,YIN2022631,WANG202160,ZHANG2017573,NIKAN2020104443,PENG202452}. Guan et al. \cite{GUAN2022133} used the $L2-1_{\sigma}$ formula to solve the nonlinear time-fractional mobile-immobile transport equation and obtained the optimal $L^2$-norm error estimate without the stepsize restriction condition $\tau=O(h^{d/4})$. In addition, a numerical method with third-order accuracy in time was proposed by Luo et al. \cite{LUO2016252}.

Numerical scheme of the variable-exponent time-fractional differential equation is more complicated. Wang and Zheng  developed a finite difference method using graded mesh for a nonlinear variable-exponent time-fractional differential equations without the spatial variables to recover the optimal-order convergence $O(\tau)$ \cite{WANG20192647}.
  Zheng and Wang \cite{ZHENG20211522} used $L1$-formula to approximate the variable-exponent Caputo time derivative and finite element method to discretize the spatial variables  and finally obtain optimal first-order accuracy in time for solving the time-fractional differential equations. Ma et al. used $L1$ fully-discrete stabilizer-free weak Galerkin
 finite element method for the initial-boundary
value problem of variable-exponent Caputo time-fractional diffusion equation and obtain first-order accuracy \cite{MA20232096}. Du et al. \cite{DU20202952} developed $L2-1_{\sigma}$ formula to approximate the variable-exponent Caputo fractional derivative and obtained  a temporal second-order difference schemes and spatial fourth-order finite different method for the multi-dimensional variable-exponent time-fractional subdiffusion equations. However at every time step, they need solve a nonlinear equation by Newton's iterative method to generate the parameter $\sigma_n$ in $L2-1_{\sigma}$ formula for variable-exponent Caputo derivative. 
  Zhang et al. \cite{ZHANG2022200} combined $L2-1_{\sigma}$ formula \cite{DU20202952} with the exponential-sum-approximation  technique \cite{Zhang2022323} to propose a fast second-order approximation to the variable-exponent Caputo fractional derivative. Some computational methods for variable-exponent fractional nonlinear  equations are seen in \cite{GU20232124,HEYDARI20202,HEYDARI2019235}.

  In this paper, motivated by \cite{DU20202952}, we propose a temporal second-order difference schemes based on $L2-1_{\sigma}$ formula and spatial  finite element method for variable-exponent time-fractional differential equation. In order to obtain the superconvergence point $t_{n-\theta_n}$, authors in \cite{DU20202952}  have to  use Newton's method for solving the nonlinear equation $\theta_n=\alpha(t_{n-\theta_n})/2$ to generate the parameter $\theta_n$ at the $n$th time step.  
  However, in this paper it is sufficient to choose $\theta_n$ satisfying $\theta_n\leq\alpha(t_{n-\theta_n})/2$ and  $2\theta_n$ lies within the range of values of the order function $\alpha(t)$ over the interval $[t_{n-1},t_n]$. This relaxes the selection criterion of $theta_n$. In particular, when $\alpha(t)$ is monotonically increasing on $[t_{n-1},t_n]$, we may choose any $\alpha_n\in [\alpha(t_{n-1}),\alpha(t_{n-1/2})]$ and $\theta_n=\alpha_n/2$; when it is monotonically decreasing, we may take $\alpha_n=\alpha(t_{n-\alpha(t_n)/s})$ with $s\geq 2$ and $\theta_n=\alpha_n/2$. As a result, at each time step, infinitely many values of  $\theta_n$ can be obtained. To account for the initial singularity of the solutions, we apply graded temporal mesh to achieve the $O(N^{-\min\{r\delta, 2\}}+h^{\mu})$ accuracy. 

 The rest of the paper is organized as follows: in Section 2, we present time approximate scheme for variable-exponent Caputo time derivatives on  graded temporal meshes, the truncate error estimates and some properties of the coefficients $c_{k,n}^{(\alpha_n^*)}$. In Section 3, we give full discrete scheme for variable-exponent subdiffusion equations. In Section 4, we prove the stability and optimal convergent order.  Numerical experiments are performed  in Section 5 to substantiate the theorematic analysis and in Section 5.2 we generalize the method to solve variable-exponent mobile-immobile time-fractional differential equations.

Throughout the work,  let $m\in\mathbb{N}$, where $\mathbb{N}$ is the set of non-negative integers, $1\leq p\leq\infty$. Let $L_p(\Omega)$ be the space of the $p$th power Lebesque integrable functions in $\Omega$ and $W^{m,p}(\Omega)$ be the Sobolev space of functions
with derivatives of order up to $m$ in $L_p(\Omega)$. Let $H^m(\Omega):= W^{m,2}(\Omega)$ and $H^m_0(\Omega)$
be the completion of $C_0^{\infty}(\Omega)$, the space of infinitely differentiable functions with compact support in $\Omega$, in $H^m(\Omega)$. For any $s\geq 0$ the fractional Sobolev space $H^s(\Omega)$ is defined
by interpolation \cite{Adams2003}. 
For convenience, we use $(\cdot,\cdot)$, $\|\cdot\|$ and $\|\cdot\|_p$ denote the inner product, the norm in the space $L_2(\Omega)$ and the norm in the space $L_p(\Omega)$, respectively.
%
%For a Banach space $\mathscr{X}$ we introduce the Sobolev space involving time
%\begin{equation}
%W^{m,p}(\mathscr{I};\mathscr{X}):=\{f: f_t^{(l)}(\cdot,t)\in\mathscr{X}, t\in\mathscr{I}, \|f_t^{(l)}(\cdot,t)\|_{\mathscr{X}}\in L_p(\mathscr{I}), 0\leq l\leq m\}.
%\end{equation}
%In particular $W^{0,p}(\mathscr{I};\mathscr{X})=L_p(\mathscr{I};\mathscr{X})$ for $1\leq p\leq\infty$. We let $C^m(\mathscr{I};\mathscr{X})$ be the space of functions with continuous derivatives up to order $m$ on $\mathscr{I}$ equipped with the norm
%$$\|f\|_{C^m(\mathscr{I};\mathscr{X})}:=\max_{0\leq l\leq m}\sup_{t\in \mathscr{I}}\|f_t^{(l)}(\cdot,t)\|_{\mathscr{X}}.$$
%
Furthermore, we make the following assumptions:\\
{\bf Assumption A.} Suppose that $\alpha\in C^1[0,T]$ and $0\leq \alpha(t)\leq\alpha^*<1$ on $[0,T]$; There exist two positive constants $K_*, K^*$ such that $0<K_*\leq \mathbf{\xi}^T\mathbf{K}\mathbf{\xi}\leq K^*<\infty$ for any $\mathbf{\xi}\in\mathbb{R}^d$
  with $|\mathbf{\xi}|=1$, $k_{ij}\in C^1(\overline{\Omega}), 1\leq i,j\leq d$.
The solution $u$ of the problem (\ref{eq:tFDE})-(\ref{eq:tFDE1})
 satisfies $u(\cdot,t)\in H^s(\Omega), s>1$ for any $t\in [0,T]$ and
\begin{equation}\label{eq:assump-solution}
\|u(\cdot,t)\|+\|D_t^{\alpha(t)}u(\cdot,t)\|\leq Q_0,\quad \|\p^l_tu(\cdot,t)\|\leq Q(1+t^{\delta-l})\ \
 \text{for}\ \ l=0,1,2,3,
\end{equation}
where $\delta\in (0,1)$ is a positive constant. % and satisfies $\sigma\geq\alpha*$.

\section{Temporal discretization}\label{sec3}

Following the ideas of \cite{DU20202952,HUANG202343,ZHANG2022200}, we consider the $L2-1_{\sigma}$ formula for the variable-exponent fractional derivative. Du et al. \cite{DU20202952} used the $L2-1_{\sigma}$ formula to discretize the variable-exponent Caputo derivative, where the parameter $\theta_n$ in the superconvergence point $t_{n-\theta_n}$ is determined by solving the (nonlinear) equation $\theta_n=\alpha(t_{n-\theta_n})/2$. In subsequent analysis, we will relax this selection criterion: every $\kappa$ value meeting 
\begin{equation}\label{cond}
\alpha(t_{n-\frac{\kappa}{2}})\geq \kappa,\quad \text{for} \ \ 
\kappa\in [\min_{t\in[t_{n-1},t_n]}\alpha(t),\max_{t\in[t_{n-1},t_n]}\alpha(t)]
\end{equation}
can determine a superconvergence point $t_{n-\theta_n}$ in $[t_{n-1},t_n]$ with $\theta_n={\kappa}/{2}$, thus yielding infinitely many superconvergence points. Denote $\alpha_n^*:=\alpha(t_{n-\theta_n})$.

The condition (\ref{cond}) provides a more flexible way in determining the superconvergence points. For instance, $\kappa=\min_{t\in [t_{n-1},t_n]}\alpha(t)$ always satisfies the condition (\ref{cond}). Thus, if the variable exponent is monotonic (not necessarily linear), one could determine $\min_{t\in [t_{n-1},t_n]}\alpha(t)$ immediately without any calculation.

 We divide the interval $[0,T]$ into $N$  graded mesh and denote $t_n=T(n/N)^r$, $0\leq n\leq N, r\geq 1$,
\begin{equation}\label{eq:taun_rhon_def}
\tau_n=t_n-t_{n-1},  \  \tau=\max_{1\leq k\leq N}\tau_k=\tau_N,\   \rho_k=\tau_k/\tau_{k+1},  \  \rho=\max_{1\leq k\leq N-1}\rho_k.\end{equation}
 and $t_{n-\theta_n}:=\theta_n t_{n-1}+(1-\theta_n)t_n$. Let $\alpha_n$ is any $\kappa$ value satisfying the condition (\ref{cond}), $\theta_n:=\alpha_n/2$ and $\alpha_n^*:=\alpha(t_{n-\theta_n})$.  Denote $v^n:=v(\mathbf{x},t_n),\ v^{n-\theta_n}:=\theta_nv(\mathbf{x},t_{n-1})+(1-\theta_n)v(\mathbf{x},t_n)$
 and $\triangledown v_{\tau}^k:=v(\mathbf{x},t_k)-v(\mathbf{x},t_{k-1})$.

Take $t=t_{n-\theta_n}$ in the equation (\ref{eq:tFDE}), then
\begin{equation}\label{eq:tn_alphan00}
 \ ^C_0D_t^{\alpha(t_{n-\theta_n})}u(\mathbf{x},t_{n-\theta_n})
+\mathscr{L}u(\mathbf{x},t_{n-\theta_n})=f(\mathbf{x},t_{n-\theta_n}).
\end{equation}

Referring to \cite{DU20202952}, we use $L2-1_\sigma$ formula to discretize the Caputo time-fractional derivative at $t=t_{n-\theta_n}$.
The definition (\ref{eq:caputo}) of the Caputo  derivative implies that
\begin{equation}\label{eq:Caputo_div} ^C_0D_t^{\alpha(t_{n-\theta_n})}v(t_{n-\theta_n})=\frac{1}{\Gamma(1-\alpha_n^*)}\left(\int^{t_{n-1}}_{t_0}\frac{v'(s)}
{(t_{n-\theta_n}-s)^{\alpha_n^*}}ds+\int^{t_{n-\theta_n}}_{t_{n-1}}\frac{v'(s)}
{(t_{n-\theta_n}-s)^{\alpha_n^*}}ds\right),\end{equation}
where $\alpha_n^*=\alpha(t_{n-\theta_n})$.
Using the quadratic interpolation polynomial $\Pi_{2,k}v(t)$ of the function $v$ at three nodes $t_{k-2}, t_{k-1}, t_k$ to approximate $v(s)$ in the interval $[t_{k-1}, t_k]$ in the right first term of the above formula, substituting $v(s)$ in right second term with the linear interpolation polynomial $\Pi_{1,k}v(t)$ of the function $v$ at two nodes $t_{k-1}, t_k$, then we can obtain the approximate formula of the Caputo derivation (\ref{eq:Caputo_div}) in the following for $1\leq n\leq N$
\begin{equation*} 
(D_{\tau}^{\alpha_n^*}v)^{n-\theta_n}:=\frac{1}{\Gamma(1-\alpha_n^*)}\left(\sum_{k=1}^{n-1}\int_{t_{k-1}}^{t_k}\frac{ (\Pi_{2,k}v(t))'}
{\left(t_{n-\theta_n}-s\right)^{\alpha_n^*}}ds+\int^{t_{n-\theta_n}}_{t_{n-1}}\frac{v(t_n)-v(t_{n-1})}
{\tau_n\left(t_{n-\theta_n}-s\right)^{\alpha_n^*}}ds\right). \end{equation*}
By directly computing, we can obtain
\begin{eqnarray}
(D_{\tau}^{\alpha_n^*}v)^{n-\theta_n}
=a_0^{(\alpha_n^*)}\triangledown_{\tau}v^n+\sum_{k=1}^{n-1}\left(a_{n-k}^{(\alpha_n^*)}\triangledown_{\tau}v^k
-b_{n-k}^{(\alpha_n^*)}\triangledown_{\tau}v^k+\rho_kb_{n-k}^{(\alpha_n^*)}\triangledown_{\tau}v^{k+1}\right)\nonumber   =\sum_{k=1}^{n} c_{n-k,n}^{(\alpha_n^*)}\triangledown_{\tau}v^k.\label{eq:Caputo_v_disc} 
\end{eqnarray}
where
\begin{equation}
 a_{n-k}^{(\alpha_n^*)}=\frac{1}{\tau_k}\int_{t_{k-1}}^{\min\{t_k, t_{n-\theta_n}\}}\frac{\left(t_{n-\theta_n}-s\right)^{-\alpha_n^*}}{\Gamma(1-\alpha_n^*)}ds,\  0\leq k\leq n\label{eq:ak_def}\end{equation}
\begin{equation}b_{n-k}^{(\alpha_n^*)}=\frac{2}{\tau_k(\tau_k+\tau_{k+1})}\int_{t_{k-1}}^{t_k}\frac{(s-t_{k-1/2})(t_{n-\theta_n}-s)^{-\alpha_n^*}}
{\Gamma(1-\alpha_n^*)}ds,\  1\leq k\leq n-1. \label{eq:bk_def}
\end{equation}
and
\begin{equation}\label{eq:ckn_def}
{c}_{n-k,n}^{(\alpha_n^*)}:=\left\{\begin{array}{ll}
a_0^{(\alpha_n^*)},\quad & \text{for}\ \  k=n=1,\\
a_0^{(\alpha_n^*)}+\rho_{n-1}b_1^{(\alpha_n^*)}, & \text{for}\ \ k=n\geq 2,\\
a_{n-k}^{(\alpha_n^*)}+\rho_{k-1}b_{n-k+1}^{(\alpha_n^*)}-b_{n-k}^{(\alpha_n^*)}, &   \text{for}\ \ 2\leq k\leq n-1,\ \ 2\leq n,\\
a_{n-1}^{(\alpha_n^*)}-b_{n-1}^{(\alpha_n^*)},  &  \text{for}\ \  k=1,\ \  2\leq n.\end{array}\right.
\end{equation}

Applying the notation of (\ref{eq:Caputo_v_disc}), we define the following complementary discrete kernels $\mathbb{P}_j^{(n)}$ such that
\begin{equation*}\label{eq:P_def}
\sum_{k=m}^n\mathbb{P}_{n-k}^{(n)}c_{k-m,k}^{(\alpha_k^*)}=1,\ \ \text{for}\ \ 1\leq m\leq n\leq N.
\end{equation*}
Thus the complementary discrete kernels may be defined via the recursion \cite{HUANG202343,LIAO2019218}
$$\mathbb{P}_0^{(n)}=\frac{1}{c_{0,n}^{(\alpha_n^*)}},\ \mathbb{P}_{j}^{(n)}=\frac{1}{c_{0,m}^{(\alpha_n^*)}}\sum_{k=0}^{j-1}\mathbb{P}_k^{(n)}
\left(c_{j-k-1,n-k}^{(\alpha_{n-k}^*)}-c_{j-k,n-k}^{(\alpha_{n-k}^*)}\right), $$
for $1\leq j\leq n-1,\ 1\leq n\leq N$.

In the following, referring to the papers \cite{HUANG202343,CHEN2019624}, we present the local truncation error analysis.

\begin{lemma}\label{lem:v_format1}
Suppose $\|\p_t^lv(\cdot,t)\|\leq C_v(1+t^{\delta-l})$ with $\delta\in (0,1)$ and $l=0,1,2$ and $\theta_n\in [0,1/2]$ and $\alpha_n^*=\alpha(t_{n-\theta_n})$, then one has
\begin{equation}\label{eq:localerror0}
\|v(\cdot,t_{n-\theta_n})-v^{n-\theta_n}\|\leq\left\{\begin{array}{ll}
C_1t_{n-\theta_n}^{-\alpha_n^*}N^{-2},& r\geq {2}/{(\delta+\alpha_n^*)},\\
C_2t_{n-\theta_n}^{-\alpha_n^*}N^{-r(\delta+\alpha_n^*)}\leq C_2t_{n-\theta_n}^{-\alpha_n^*}N^{-r\delta},&  1\leq r<{2}/{(\delta+\alpha_n^*)}.
\end{array}\right.
\end{equation}
\end{lemma}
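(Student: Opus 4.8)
The plan is to read the left-hand side as the remainder of \emph{linear} interpolation in time: since $t_{n-\theta_n}=\theta_n t_{n-1}+(1-\theta_n)t_n$ with $\theta_n\in[0,1/2]$, the quantity $v^{n-\theta_n}=\theta_n v(\cdot,t_{n-1})+(1-\theta_n)v(\cdot,t_n)$ is exactly $(\Pi_{1,n}v)(t_{n-\theta_n})$, the linear interpolant of $v$ at $t_{n-1},t_n$ evaluated at the interior node $t_{n-\theta_n}$. So the task is to estimate $v(\cdot,t_{n-\theta_n})-(\Pi_{1,n}v)(t_{n-\theta_n})$ in $L^2(\Omega)$ and then substitute the graded-mesh data. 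I would treat $n\ge 2$ and $n=1$ separately, because the regularity hypothesis only controls $\partial_t^2 v$ away from the origin.

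For $n\ge 2$ the interval $[t_{n-1},t_n]$ is bounded away from $t=0$, so the Peano-kernel form of the linear-interpolation error gives
\[
\|v(\cdot,t_{n-\theta_n})-v^{n-\theta_n}\|\le \tfrac12(t_{n-\theta_n}-t_{n-1})(t_n-t_{n-\theta_n})\max_{s\in[t_{n-1},t_n]}\|\partial_t^2 v(\cdot,s)\|\le \tfrac18\,\tau_n^2\,C_v\big(1+t_{n-1}^{\delta-2}\big).
\]
Since $\delta<1$ and $t_{n-1}\le T$, one has $1+t_{n-1}^{\delta-2}\le(1+T^{2-\delta})t_{n-1}^{\delta-2}$, so this collapses to $C\tau_n^2 t_{n-1}^{\delta-2}$ with $C$ depending only on $C_v,T,\delta$. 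For $n=1$ the Peano argument fails, so I would instead use the exact identity
\[
v(\cdot,t_{1-\theta_1})-v^{1-\theta_1}=\theta_1\int_0^{t_{1-\theta_1}}\partial_t v(\cdot,s)\,ds-(1-\theta_1)\int_{t_{1-\theta_1}}^{t_1}\partial_t v(\cdot,s)\,ds
\]
together with $\|\partial_t v(\cdot,s)\|\le C_v(1+s^{\delta-1})$, which is integrable on $(0,t_1)$ because $\delta-1>-1$; this yields $\|v(\cdot,t_{1-\theta_1})-v^{1-\theta_1}\|\le C_v\int_0^{t_1}(1+s^{\delta-1})\,ds\le C t_1^{\delta}$, the lower-order term $t_1$ being absorbed into $t_1^\delta$ via $t_1\le T$ and $\delta<1$.

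It then remains to insert the graded mesh $t_n=T(n/N)^r$, $r\ge1$. The mean value theorem gives $\tau_n\le TrN^{-r}n^{r-1}$; for $n\ge2$ one has $n-1\ge n/2$, hence $t_{n-1}\ge 2^{-r}T(n/N)^r$ and also $t_{n-1}\le t_{n-\theta_n}\le t_n=T(n/N)^r$; for $n=1$, $t_1=TN^{-r}$ and $t_{1-\theta_1}=(1-\theta_1)t_1\ge t_1/2$. For $n\ge2$ the bound $C\tau_n^2 t_{n-1}^{\delta-2}$ becomes, after substitution, $\le C\,N^{-r\delta}n^{r\delta-2}$; using $t_{n-\theta_n}^{-\alpha_n^*}\ge T^{-\alpha_n^*}(n/N)^{-r\alpha_n^*}$ I rewrite it as $C\,t_{n-\theta_n}^{-\alpha_n^*}\,N^{-r(\delta+\alpha_n^*)}\,n^{\,r(\delta+\alpha_n^*)-2}$. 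A case split on the sign of $r(\delta+\alpha_n^*)-2$ then finishes it: when $r\ge 2/(\delta+\alpha_n^*)$ the exponent is $\ge0$ and $n\le N$ gives $n^{\,r(\delta+\alpha_n^*)-2}\le N^{\,r(\delta+\alpha_n^*)-2}$, so the bound becomes $C_1 t_{n-\theta_n}^{-\alpha_n^*}N^{-2}$; when $1\le r<2/(\delta+\alpha_n^*)$ the exponent is $<0$ and $n\ge1$ gives $n^{\,r(\delta+\alpha_n^*)-2}\le1$, leaving $C_2 t_{n-\theta_n}^{-\alpha_n^*}N^{-r(\delta+\alpha_n^*)}$, which is $\le C_2 t_{n-\theta_n}^{-\alpha_n^*}N^{-r\delta}$ since $\alpha_n^*\ge0$. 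The $n=1$ estimate is identical in form: $C t_1^\delta=CT^\delta N^{-r\delta}\le C\,t_{1-\theta_1}^{-\alpha_1^*}N^{-r(\delta+\alpha_1^*)}$, which in turn is $\le C N^{-2}$ precisely when $r\ge 2/(\delta+\alpha_1^*)$.

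No single step is deep; the real work is bookkeeping. The main obstacle I anticipate is twofold: isolating the $n=1$ case cleanly, since the Peano-kernel representation genuinely breaks down there and must be replaced by the integral identity above; and keeping the powers of $n$ and $N$ straight so that the two regimes of $r$ glue together into exactly the stated $\min$-type rate, while checking that the suppressed constants $C_1,C_2$ depend only on $C_v,T,\delta,r$ and not on $n$ or $N$.
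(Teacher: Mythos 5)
Your proof is correct: the identification of $v^{n-\theta_n}$ with the linear interpolant at $t_{n-\theta_n}$, the Peano-kernel bound $\tfrac18\tau_n^2\max\|\partial_t^2v\|$ for $n\ge 2$, the first-derivative integral identity for $n=1$, and the graded-mesh bookkeeping with the case split on the sign of $r(\delta+\alpha_n^*)-2$ all check out, with constants depending only on $C_v,T,\delta,r,\alpha^*$. The paper itself gives no argument but simply cites Lemma 4.7 of \cite{HUANG202343}, which proceeds along this same interpolation-remainder route, so your writeup essentially supplies the details the paper omits.
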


\begin{proof}The detailed proof is seen in Lemma 4.7 in \cite{HUANG202343}.
\end{proof}

 Let $$^C_0D_t^{\alpha_n^*}v(\cdot,t_{n-\theta_n})-(D_{\tau}^{\alpha_n^*}v)^{n-\theta_n}=R_n^{(1)}+R_n^{(2)},$$
 where
\begin{eqnarray*}
R_n^{(1)}&:=&\sum_{k=1}^{n-1}R_{n,k}^{(1)}:=\frac{1}{\Gamma(1-\alpha_n^*)}\sum_{i=1}^{n-1}\int_{t_{k-1}}^{t_k}\frac{\frac{\p}{\p s}(v(\cdot,s)-  \Pi_{2,k}v(\cdot,s))
}{(t_{n-\theta_n}-s)^{\alpha_n^*}}ds,\ \ n\geq 2.\\
R_n^{(2)}&:=&\frac{1}{\Gamma(1-\alpha_n^*)}\int_{t_{n-1}}^{t_{n-\theta_n}}\frac{\frac{\p}{\p s}v(\cdot,s)-(v(\cdot,t_n)-v(\cdot,t_{n-1}))/\tau_n}
{(t_{n-\theta_n}-s)^{\alpha_n^*}}ds,\ \ n\geq 1.
\end{eqnarray*}

\begin{lemma}\label{lem:CaputoEst1}
Suppose $\|\p_t^lv(\cdot,t)\|\leq C_v(1+t^{\delta-l})$ with $\delta\in (0,1)$ and $l=0,1,2,3$. Let $\theta_n:=\alpha_n/2$ and $\alpha_n^*:=\alpha(t_{n-\theta_n})$ where $\alpha_n\in [\min_{t\in[t_{n-1},t_n]}\alpha(t),$ $\max_{t\in[t_{n-1},t_n]}\alpha(t)]$.
  Then one has
\begin{equation}\label{eq:CaputoEst1}
\left\|^C_0D_t^{\alpha_n^*}v(\cdot,t_{n-\theta_n})-(D_{\tau}^{\alpha_n^*}v)^{n-\theta_n}\right\|\leq C_Lt_{n-\theta_n}^{-\alpha_n^*}
N^{-\min\{3-\alpha^*, r\delta\}},\quad 1\leq n\leq N.
\end{equation}
\end{lemma}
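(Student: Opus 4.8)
The plan is to work from the decomposition $^C_0D_t^{\alpha_n^*}v(\cdot,t_{n-\theta_n})-D_\tau^{\alpha_n^*}v^{n-\theta_n}=R_n^{(1)}+R_n^{(2)}$ introduced just above the statement, and to bound the two pieces by different mechanisms, since only $R_n^{(2)}$ is sensitive to the choice of superconvergence point. The term $R_n^{(1)}$ (present only for $n\ge2$) is the consistency error created on the ``history'' subintervals $[t_{k-1},t_k]$, $1\le k\le n-1$, by replacing $v$ with a piecewise interpolant; here the role of $\theta_n\in[0,1)$ is merely to keep $t_{n-\theta_n}>t_{n-1}\ge t_k$ so that $(t_{n-\theta_n}-s)^{-\alpha_n^*}$ stays bounded on each $[t_{k-1},t_k]$, and the particular rule $\theta_n=\alpha_n/2$ plays no part. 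On each interval with $k$ away from $0$ one has the standard estimate $\|\p_s\bigl(v(\cdot,s)-\Pi_{2,k}v(\cdot,s)\bigr)\|\le C\tau_k^{2}\max_{t\in[t_{k-2},t_k]}\|\p_t^3v(\cdot,t)\|$ (constant depending only on the mesh ratios $\rho_k\le1$), while the first one or two subintervals, where $v$ has limited regularity near $t=0$ and a backward node may be unavailable, receive a lower-order treatment using only the crude bounds $\|\p_t^lv(\cdot,t)\|\le C_v(1+t^{\delta-l})$ of Assumption~A. Inserting these into $R_n^{(1)}=\frac{1}{\Gamma(1-\alpha_n^*)}\sum_{k=1}^{n-1}\int_{t_{k-1}}^{t_k}\frac{\p_s(v-\Pi_{2,k}v)}{(t_{n-\theta_n}-s)^{\alpha_n^*}}\,ds$, controlling $\int_{t_{k-1}}^{t_k}(t_{n-\theta_n}-s)^{-\alpha_n^*}\,ds$ and comparing the resulting sum with an integral in the usual way, gives $\|R_n^{(1)}\|\le C\,t_{n-\theta_n}^{-\alpha_n^*}N^{-\min\{3-\alpha^*,\,r\delta\}}$; since only the number $\alpha_n^*=\alpha(t_{n-\theta_n})\in(0,\alpha^*]$ and the inclusion $t_{n-\theta_n}\in(t_{n-1},t_n]$ enter, this is exactly the argument of \cite{HUANG202343,CHEN2019624}, which I would invoke rather than reproduce.

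The genuinely new ingredient is $R_n^{(2)}$, and this is where the relaxed rule $\theta_n=\alpha_n/2$ must be exploited. Write $\frac{v^n-v^{n-1}}{\tau_n}=\frac{1}{\tau_n}\int_{t_{n-1}}^{t_n}v'(\cdot,\xi)\,d\xi$ and Taylor-expand $v'$ about the midpoint $t_{n-1/2}=\frac{t_{n-1}+t_n}{2}$, so that $v'(\cdot,s)-\frac{v^n-v^{n-1}}{\tau_n}=v''(\cdot,t_{n-1/2})\,(s-t_{n-1/2})+E(\cdot,s)$ with $\|E(\cdot,s)\|\le C\tau_n^{2}\max_{t\in[t_{n-1},t_n]}\|\p_t^3v(\cdot,t)\|$ uniformly in $s\in[t_{n-1},t_{n-\theta_n}]$. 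Since $\int_{t_{n-1}}^{t_{n-\theta_n}}(t_{n-\theta_n}-s)^{-\alpha_n^*}\,ds=\frac{((1-\theta_n)\tau_n)^{1-\alpha_n^*}}{1-\alpha_n^*}$, the $E$-part of $R_n^{(2)}$ is at most $C\tau_n^{3-\alpha_n^*}\max_{t\in[t_{n-1},t_n]}\|\p_t^3v(\cdot,t)\|$. For the linear part, the substitution $u=t_{n-\theta_n}-s$ yields, with $L=(1-\theta_n)\tau_n$,
\[
\int_{t_{n-1}}^{t_{n-\theta_n}}(s-t_{n-1/2})(t_{n-\theta_n}-s)^{-\alpha_n^*}\,ds
=\frac{L^{1-\alpha_n^*}\tau_n}{(1-\alpha_n^*)(2-\alpha_n^*)}\,g(\theta_n),\qquad
g(\theta):=\Bigl(\tfrac12-\theta\Bigr)(2-\alpha_n^*)-(1-\alpha_n^*)(1-\theta).
\]
As $g$ is affine in $\theta$ with slope $-1$ and $g(\alpha_n^*/2)=0$, one has $g(\theta_n)=\tfrac{\alpha_n^*}{2}-\theta_n=\tfrac12(\alpha_n^*-\alpha_n)$, and because $\alpha_n^*=\alpha(t_{n-\theta_n})$ and $\alpha_n$ are two values of the $C^1$ function $\alpha$ at points of $[t_{n-1},t_n]$, $|g(\theta_n)|\le\tfrac12\|\alpha'\|_{C[0,T]}\tau_n$ (under (\ref{cond}) one in fact gets $g(\theta_n)\ge0$, but only $|g(\theta_n)|=O(\tau_n)$ is used here). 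Hence the linear part of $R_n^{(2)}$ is $\le C\|v''(\cdot,t_{n-1/2})\|\,\tau_n^{3-\alpha_n^*}$, and therefore, for $n\ge2$,
\[
\|R_n^{(2)}\|\le C\Bigl(\max_{t\in[t_{n-1},t_n]}\|\p_t^2v(\cdot,t)\|+\max_{t\in[t_{n-1},t_n]}\|\p_t^3v(\cdot,t)\|\Bigr)\tau_n^{3-\alpha_n^*}.
\]
This is precisely the mechanism behind (\ref{cond}): the relaxed node $t_{n-\alpha_n/2}$ lies within $O(\tau_n)$ of the exact superconvergence node $t_{n-\alpha_n^*/2}$, so the leading first-order term on the last subinterval is not annihilated but merely lowered by one power of $\tau_n$, which is all one needs for an $O(\tau_n^{3-\alpha_n^*})$ local error.

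It then remains to convert the last display into the asserted bound and to treat $n=1$. For $2\le n\le N$ I would insert $\|\p_t^lv(\cdot,t)\|\le C_v(1+t^{\delta-l})$ together with the equivalences $t_{n-1}\sim t_n\sim t_{n-\theta_n}$, the inequality $\tau_n\lesssim t_{n-1}$, and the graded-mesh relation $\tau_n\le C N^{-1}t_n^{1-1/r}$; a short computation (separating $r\delta\ge3-\alpha^*$ from $r\delta<3-\alpha^*$, and using $3-\alpha_n^*\ge3-\alpha^*$) then gives $\|R_n^{(2)}\|\le C\,t_{n-\theta_n}^{-\alpha_n^*}N^{-\min\{3-\alpha^*,\,r\delta\}}$. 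For $n=1$ the subinterval touches $t=0$ and the midpoint expansion fails, so one instead uses $\|v'(\cdot,s)\|\le Cs^{\delta-1}$ and $\|\frac{v^1-v^0}{\tau_1}\|\le Ct_1^{\delta-1}$ directly, and the Beta-type integral $\int_0^{t_{1-\theta_1}}(s^{\delta-1}+t_1^{\delta-1})(t_{1-\theta_1}-s)^{-\alpha_1^*}\,ds$ yields $\|R_1\|=\|R_1^{(2)}\|\le C\,t_{1-\theta_1}^{\delta-\alpha_1^*}\le C\,t_{1-\theta_1}^{-\alpha_1^*}N^{-r\delta}$, since $t_{1-\theta_1}\sim t_1=TN^{-r}$. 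Adding the bounds on $R_n^{(1)}$ and $R_n^{(2)}$ over $1\le n\le N$ produces (\ref{eq:CaputoEst1}). I expect the principal difficulty to be bookkeeping rather than conceptual: the graded-mesh summation inside $R_n^{(1)}$ and the split between the singular first intervals and the smooth range are delicate but already carried out in \cite{HUANG202343,CHEN2019624}; the one truly new step is the identity $g(\theta_n)=\tfrac12(\alpha_n^*-\alpha_n)$ and the elementary Lipschitz estimate that turns it into an $O(\tau_n)$ gain.
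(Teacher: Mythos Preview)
Your proposal is correct and follows essentially the same route as the paper. Both arguments defer the history term $R_n^{(1)}$ to \cite{HUANG202343}, Taylor-expand the integrand of $R_n^{(2)}$ about the midpoint $t_{n-1/2}$, reduce the leading contribution to the exact integral $\int_{t_{n-1}}^{t_{n-\theta_n}}(s-t_{n-1/2})(t_{n-\theta_n}-s)^{-\alpha_n^*}\,ds$, evaluate it to obtain the factor $\tfrac12(\alpha_n^*-\alpha_n)$, and then use the $C^1$ regularity of $\alpha$ to gain the extra power of $\tau_n$; the $n=1$ case is handled identically by a direct crude bound giving $t_{1-\theta_1}^{-\alpha_1^*}N^{-r\delta}$.
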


\begin{proof} The proof is referred to Lemma 4.6 in \cite{HUANG202343}.
  In \cite{HUANG202343}, the authors required $\alpha_n=\alpha_n^*$ for each temporal step $t=t_n$. In our paper, we only require that there exists
  $t_n^*\in [t_{n-1},t_n]$ such that $\alpha_n=\alpha(t_n^*)$ and need not the condition $\alpha_n=\alpha_n^*$. In the proof of Lemma 4.6 in \cite{HUANG202343}, only the estimation of $R_n^{(2)}$ depends on the condition $\alpha_n=\alpha_n^*$. In the following, we only give the estimation of   $R_n^{(2)}$ for $n\geq 2$.
   Using the Taylor's expansion of $v(\cdot,t)$ at $t=t_{n-1/2}=\frac{1}{2}(t_{n}+t_{n-1})$, we obtain
$${v(\cdot,t_n)-v(\cdot,t_{n-1})}=v_t(\cdot,t_{n-1/2}){\tau_n}+\d\frac{\tau_n^3}{24}v_{ttt}(\cdot,\xi_n), \quad \xi_n\in (t_{n-1},t_n),$$
and
$$v_t(\cdot,s)=v_t(\cdot,t_{n-1/2})+v_{tt}(\cdot,t_{n-1/2})(s-t_{n-{1}/{2}})
+\frac{1}{2}v_{ttt}(\cdot,\xi_n^*)(s-t_{n-{1}/{2}})^2,$$
where $\xi_n^*\in (t_{n-1},t_n)$.
Thus, we have
\begin{eqnarray*}
R_n^{(2)}
&=&\d\frac{1}{\Gamma(1-\alpha_n^*)}\int_{t_{n-1}}^{t_{n-\theta_n}}\frac{v_{tt}(\cdot,t_{n-1/2})(s-t_{n-\frac{1}{2}})
+\frac{1}{2}v_{ttt}(\cdot,\xi_n^*)(s-t_{n-\frac{1}{2}})^2}{(t_{n-\theta_n}-s)^{\alpha_n^*}}ds\\
&&-\d\frac{1}{\Gamma(1-\alpha_n^*)}\int_{t_{n-1}}^{t_{n-\theta_n}}\frac{
\frac{1}{24}v_{ttt}(\cdot,\xi_n)\tau_n^2}{(t_{n-\theta_n}-s)^{\alpha_n^*}}ds
:=A_1+A_2+A_3.
\end{eqnarray*}
 The inequality  $0<t_{n-\theta_n}-t_{n-1}\leq t_n-t_{n-1}$ implies that
\begin{eqnarray*}
 \|A_2\|&=&\left\|\int_{t_{n-1}}^{t_{n-\theta_n}}\frac{ \frac{1}{2}v_{ttt}(\cdot,\xi_n^*)(s-t_{n-\frac{1}{2}})^2
}{\Gamma(1-\alpha_n^*)(t_{n-\theta_n}-s)^{\alpha_n^*}}ds\right\|
\leq \frac{C_v}{4\Gamma(1-\alpha_n^*)}t_{n-1}^{\delta-3}\tau_n^2\int_{t_{n-1}}^{t_{n-\theta_n}}(t_{n-\theta_n}-s)^{-\alpha_n^*}ds\\
 &=&\frac{C_v}{4\Gamma(2-\alpha_n^*)}t_{n-1}^{\delta-3}\tau_n^2(t_{n-\theta_n}-t_{n-1})^{1-\alpha_n^*}
 \leq \frac{C_v}{4\Gamma(2-\alpha_n^*)}t_{n-1}^{\delta-3}\tau_n^{3-\alpha_n^*}.
\end{eqnarray*}
Since \begin{equation}\label{eq:tn1_tn} t_{n-1}=T\left(\frac{n-1}{N}\right)^r=t_n \left(1-\frac{1}{n}\right)^r\geq t_n2^{-r}, \ n\geq 2. \end{equation}
and
\begin{eqnarray*}\left(\frac{\tau_n}{t_n}\right)^{3-\alpha_n^*}t_n^{\delta}&\leq& r^{3-\alpha_n^*}T^{\delta}n^{r\delta-3+\alpha_n^*}N^{-r\delta}
\leq  \left\{
\begin{array}{ll}
r^{3-\alpha^*}T^{\delta}N^{-(3-\alpha_n^*)}, r\geq (3-\alpha_n^*)/\delta,\\
r^{3-\alpha^*}T^{\delta}2^{r\delta-3+\alpha_n^*}N^{-r\delta}, 1\leq r<(3-\alpha_n^*)/\delta.\end{array}\right.
 \end{eqnarray*}
 then we have
\begin{eqnarray*}
\|A_2\|\leq\frac{C_v 2^{r(3-\alpha_n^*)}}{4\Gamma(2-\alpha_n^*)}t_{n}^{\delta-3+\alpha_n^*}\tau_n^{3-\alpha_n^*}t_{n-\alpha_n}^{-\alpha_n^*}\leq C_3t_{n-\alpha_n}^{-\alpha_n^*}
N^{-\min\{3-\alpha^*, r\delta\}}.
\end{eqnarray*}
Similarly, we can obtain
$$ \|A_3\|\leq C_4N^{-\min\{3-\alpha^*, r\delta\}}. $$
Now estimate $A_1$.
Directly computing the following integration and $\theta_n=\alpha_n/2, \alpha_n=\alpha(t_n^*)$ yield
\begin{equation}\label{eq:alphan}
\begin{array}{rcl}
\d\int_{t_{n-1}}^{t_{n-\theta_n}}\d\frac{s-t_{n-\frac{1}{2}}}{(t_{n-\theta_n}-s)^{\alpha_n^*}}ds&=&
\d\frac{\left(t_{n-\theta_n}-t_{n-1}\right)^{1-\alpha_n^*}}{(2-\alpha_n^*)(1-\alpha_n^*)}
\left( \theta_n-\frac{\alpha_n^*}{2}\right)\tau_n
=\d\frac{(\alpha_n^*-\alpha_n)(1-\theta_n)^{1-\alpha_n^*}}{2(2-\alpha_n^*)(1-\alpha_n^*)}
\tau_n^{2-\alpha_n^*} \\
&\leq & \d\frac{\tau_n^{2-\alpha_n^*}}{2(1-\alpha_n^*)(2-\alpha_n^*)} \left|\alpha(t_{n-\theta_n})-\alpha(t_n^*)\right|\leq \d\frac{\tau_n^{3-\alpha_n^*}}{2(1-\alpha^*)} \max_{t\in [0,T]}|\alpha'(t)|.\end{array}
\end{equation}
The inequality (\ref{eq:alphan}) and the inference for the estimation of $A_2$ satisfy
\begin{eqnarray*}\|A_1\|\leq  \frac{C_v\max_{t\in [0,T]}|\alpha'(t)|}{2\Gamma(2-\alpha_n^*)}t_{n-1/2}^{\delta-2}\tau_n^{3-\alpha_n^*}\leq \frac{C_vT\max_{t\in [0,T]}|\alpha'(t)|}{2\Gamma(2-\alpha_n^*)}t_{n-1}^{\delta-3}\tau_n^{3-\alpha_n^*}
\leq  C_5t_{n-\alpha_n}^{-\alpha_n^*}
N^{-\min\{3-\alpha^*, r\delta\}}.
\end{eqnarray*}
Based on the estimations of $A_1, A_2, A_3$, we have
$$ \|R^{(2)}_n\|\leq \max\{C_3, C_4, C_5\}t_{n-\alpha_n}^{-\alpha_n^*}
N^{-\min\{3-\alpha^*, r\delta\}}.
$$
From Lemma 4.6 in \cite{HUANG202343}, we can obtain
  $$\|R_n^{(1)}\|\leq C_6t_{n-\alpha_n}^{-\alpha_n^*}
N^{-\min\{3-\alpha^*, r\delta\}}, $$
and
$$\|R_n^{(2)}\|\leq C_7t_{n-\alpha_n}^{-\alpha_n^*}
N^{- r\delta}, \quad n=1.$$
Combining with the estimates of $R_n^{(1)}$ and $R_n^{(2)}$, we obtain the conclusion (\ref{eq:CaputoEst1}).
\end{proof}

{\bf Remark 1.} The inequality (\ref{eq:alphan}) is crucial for $L2-1_{\sigma}$ formula to achieve  high-order accuracy.
For constant-order subdiffusion equations  \cite{ALIKHANOV2015424,GUAN2022133}, the parameter $\theta_n$ is such that $\theta_n=\alpha_n/2, \alpha_n=\alpha_n^*=\alpha$ where $\alpha$ is a constant and is  the order of the Caputo time-fractional derivative and then the integration in (\ref{eq:alphan}) is zero. For variable-exponent subdiffusion equations, the authors in \cite{DU20202952,ZHANG2022200} use Newton's method to solve the nonlinear equation $\alpha_n=\alpha(t_{n-\alpha_n/2})$, that is $\alpha_n=\alpha_n^*$, to obtain $\alpha_n$ and then it ensures that the integration in (\ref{eq:alphan}) is zero. Thus, based on the similar analysis in \cite{ALIKHANOV2015424}, they can obtain the temporal second-order accuracy for smooth exact solution. In this paper, we only choose $\alpha_n:=\alpha(t_n^*)$ for a certain value $t_n^*\in [t_{n-1},t_n]$ and then $|\alpha_n-\alpha_n^*|=|\alpha(t_n^*)-\alpha(t_{n-\alpha_n/2})|=|\alpha'(\xi_n)(t_n^*-t_{n-\alpha_n/2})|\leq |\alpha'(\xi_n)|\tau_n$. Thus, we can obtain at least $(3-\alpha_n^*)$-order accuracy at every time step $t=t_n$.

Using the similar inference in \cite{LIAO2021567}, we give the properties of the coefficients $a_k^{(\alpha_n^*)}, b_k^{(\alpha_n^*)}, c_{k,n}^{(\alpha_n^*)}$ which play an important role in the convergent analysis and stability of our method.

\begin{lemma}\label{lem:Gamma_est} Suppose that $0\leq\alpha_n^*\leq\alpha^*<1$ for every $n$, then we have
\begin{equation*}\label{eq:Gamma_est}\frac{3}{5}\leq\Gamma(1-\alpha_n^*)\leq \frac{2}{1-\alpha^*},\quad 1\leq n\leq N.
\end{equation*}
\end{lemma}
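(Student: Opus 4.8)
The statement to prove is Lemma~\ref{lem:Gamma_est}: for $0\le\alpha_n^*\le\alpha^*<1$, one has $\tfrac35\le\Gamma(1-\alpha_n^*)\le\tfrac{2}{1-\alpha^*}$.

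\textbf{Plan of proof.} The plan is to reduce everything to elementary estimates of the Gamma function on the interval $(0,1]$, since $1-\alpha_n^*\in(0,1]$. First I would establish the upper bound. Write $\Gamma(1-\alpha_n^*)=\dfrac{\Gamma(2-\alpha_n^*)}{1-\alpha_n^*}$ using the functional equation $\Gamma(z+1)=z\Gamma(z)$. On $[1,2]$ the Gamma function is bounded above by $1$ (indeed $\Gamma$ is convex there with $\Gamma(1)=\Gamma(2)=1$, so $\Gamma(2-\alpha_n^*)\le 1$). Hence $\Gamma(1-\alpha_n^*)\le \dfrac{1}{1-\alpha_n^*}\le \dfrac{1}{1-\alpha^*}\le\dfrac{2}{1-\alpha^*}$, using $0\le\alpha_n^*\le\alpha^*$ for the monotonicity of $x\mapsto 1/(1-x)$. (In fact this gives the sharper bound $1/(1-\alpha^*)$; the factor $2$ in the statement is just slack, presumably kept for later convenience.)

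For the lower bound, I would again use $\Gamma(1-\alpha_n^*)=\Gamma(2-\alpha_n^*)/(1-\alpha_n^*)$, so it suffices to bound $\Gamma(2-\alpha_n^*)$ from below on $[1,2]$ and handle the $1/(1-\alpha_n^*)\ge 1$ factor trivially: $\Gamma(1-\alpha_n^*)\ge\Gamma(2-\alpha_n^*)\ge\min_{z\in[1,2]}\Gamma(z)$. The minimum of $\Gamma$ on $(0,\infty)$ is attained near $z_0\approx1.4616$ with $\Gamma(z_0)\approx0.8856$, which is comfortably larger than $3/5$; so $\min_{[1,2]}\Gamma\ge 3/5$. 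To make this rigorous without invoking the precise location of the minimizer, I would instead note $\Gamma(2-\alpha_n^*)\ge \Gamma(z_0)$ is cleaner to cite, or, if a self-contained argument is wanted, bound $\log\Gamma$ via convexity: $\log\Gamma$ is convex on $(0,\infty)$, and $\log\Gamma(1)=\log\Gamma(2)=0$, so $\log\Gamma(z)\le 0$ on $[1,2]$ — that only recovers the upper bound. For the lower bound the simplest rigorous route is: on $[1,2]$, $\Gamma(z)=\Gamma(1)+\Gamma'(\eta)(z-1)$ for some $\eta$, and $\Gamma'(1)=-\gamma\approx-0.5772$, with $\Gamma'$ increasing (by convexity of $\Gamma$), so $\Gamma'(\eta)\ge\Gamma'(1)=-\gamma$ gives $\Gamma(z)\ge 1-\gamma(z-1)\ge 1-\gamma> 1-0.578 = 0.422$. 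That is below $3/5$, so I need a slightly better bound; using instead that the minimum value is $\Gamma(z_0)>0.88>3/5$ is the path of least resistance, citing a standard reference or a short numerical verification.

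\textbf{Main obstacle.} The only real subtlety is the lower bound: a crude first-order Taylor or convexity estimate of $\Gamma$ near $z=1$ is not quite sharp enough to clear the threshold $3/5$, so the argument must either invoke the known minimum value $\min_{z>0}\Gamma(z)=\Gamma(z_0)\approx0.8856>3/5$ or use a marginally more careful estimate (e.g.\ evaluating $\Gamma$ at a few points in $[1,2]$ together with convexity of $\Gamma$ to interpolate a lower bound). The upper bound is immediate from the functional equation and $\Gamma\le1$ on $[1,2]$. I would present the proof in the order: (i) functional equation reduction; (ii) upper bound via $\Gamma(2-\alpha_n^*)\le1$ and monotonicity of $1/(1-x)$; (iii) lower bound via $\Gamma(1-\alpha_n^*)\ge\Gamma(2-\alpha_n^*)\ge\min_{[1,2]}\Gamma>3/5$.
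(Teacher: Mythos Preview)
Your proof is correct but takes a different route from the paper's. The paper works directly with the integral representation $\Gamma(1-\alpha_n^*)=\int_0^\infty x^{-\alpha_n^*}e^{-x}\,dx$, splitting at $x=1$: on $(0,1]$ one has $1\le x^{-\alpha_n^*}\le x^{-\alpha^*}$, so $\int_0^1 x^{-\alpha_n^*}e^{-x}\,dx$ lies between $1-1/e$ and $\int_0^1 x^{-\alpha^*}\,dx=1/(1-\alpha^*)$; on $[1,\infty)$ the integrand is at most $e^{-x}$, contributing at most $1/e$. Adding gives the lower bound $1-1/e>3/5$ and the upper bound $1/(1-\alpha^*)+1/e\le 2/(1-\alpha^*)$. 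Your functional-equation reduction to $\Gamma(2-\alpha_n^*)/(1-\alpha_n^*)$ with $\Gamma\le 1$ on $[1,2]$ delivers the upper bound more cleanly (and with the sharper constant $1$ rather than $2$), but for the lower bound you are forced to invoke the numerical value $\min_{[1,2]}\Gamma\approx 0.8856$, which --- as you yourself note --- is not quite derivable from convexity and endpoint data alone and needs an external reference or ad hoc verification. The paper's route trades the slightly looser upper constant for a fully self-contained lower bound computed from the integral; if you want to avoid citing the Gamma minimum, the integral-splitting argument is the cleaner option.
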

\begin{proof}The definition of the function $\Gamma$ implies that
\begin{eqnarray*}
\Gamma(1-\alpha_n^*)=\int_0^1x^{-\alpha_n^*}e^{-x}dx+\int_1^{\infty}x^{-\alpha_n^*}e^{-x}dx
\end{eqnarray*}
If $x\in (0,1]$, then we have
$1\leq x^{-\alpha_n^*}\leq x^{-\alpha^*}$  and furthermore
$$1-\frac{1}{e}\leq \int_0^1x^{-\alpha_n^*}e^{-x}dx\leq \int_0^1x^{-\alpha^*}e^{-x}dx\leq \int_0^1x^{-\alpha^*}dx\leq \frac{1}{1-\alpha^*}.$$
If $x\in [1,\infty)$, then we can obtain $ x^{-\alpha^*}\leq x^{-\alpha_n^*}\leq 1$ and furthen
$$\int_1^{\infty}x^{-\alpha^*}e^{-x}dx\leq \int_1^{\infty}x^{-\alpha_n^*}e^{-x}dx\leq \frac{1}{e}.$$
Thus, we have
$$\frac{3}{5}<1-\frac{1}{e}<1-\frac{1}{e}+\int_1^{\infty}x^{-\alpha^*}e^{-x}dx\leq\Gamma(1-\alpha_n^*)\leq \frac{1}{1-\alpha^*}+\frac{1}{e}\leq \frac{2}{1-\alpha^*}.$$
\end{proof}

Referring to

 \begin{lemma}\label{lem:ak_bk}Let the parameter $\theta_n=\alpha_n/2, \alpha_n\in [0,1)$, and $\alpha_n^*=\alpha(t_{n-\theta_n})$.  The positive coefficients $a_{n-k}^{(\alpha_n^*)}, b_{n-k}^{(\alpha_n^*)}$ defined in (\ref{eq:ak_def}) and (\ref{eq:bk_def}) satisfy
 %\begin{equation}0<b_{n-k}^{(\alpha_n^*)}\leq \frac{\rho_k\alpha_n^*}{4(1+\rho_k)\Gamma(1-\alpha_n^*)}\int_{t_{k-1}}^{t_k}
% (t_{n-\theta_n}-s)^{-\alpha_n^*-1}ds<\frac{\rho_k\alpha_n^*}{4(1+\rho_k)(t_{n-\theta_n}-t_k)}a_{n-k}^{(\alpha_n^*)} .
% \end{equation}
 \begin{equation*}\label{eq:ak_bk} 0<b_{n-k}^{(\alpha_n^*)}\leq \frac{1}{4}a_{n-k}^{(\alpha_n^*)}, \quad 1\leq k\leq n-1,\ \ n\geq 2.
 \end{equation*}
 and
  \begin{equation*}\label{eq:ak_upperbound} a_{n-k}^{(\alpha_n^*)}\geq \frac{1}{\Gamma(1-\alpha_n^*)}(t_n-t_{k-1})^{-\alpha_n^*}, \quad 1\leq k\leq n-1,\ \ n\geq 2.
 \end{equation*}
\end{lemma}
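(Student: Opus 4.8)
The plan is to reduce both assertions to elementary monotonicity estimates for the positive weight $w(s):=(t_{n-\theta_n}-s)^{-\alpha_n^*}$ on $[t_{k-1},t_k]$. Since $\theta_n=\alpha_n/2<1$, one has $t_{n-\theta_n}>t_{n-1}\ge t_k$ whenever $1\le k\le n-1$, so $\min\{t_k,t_{n-\theta_n}\}=t_k$ in the definition of $a_{n-k}^{(\alpha_n^*)}$ and $w$ is well defined, positive, and nondecreasing on $[t_{k-1},t_k]$ (here $x\mapsto x^{-\alpha_n^*}$ is nonincreasing because $\alpha_n^*\ge0$). The lower bound on $a_{n-k}^{(\alpha_n^*)}$ is then immediate: for $s\in[t_{k-1},t_k]$ we have $0<t_{n-\theta_n}-s\le t_n-t_{k-1}$, hence $w(s)\ge(t_n-t_{k-1})^{-\alpha_n^*}$; integrating this over $[t_{k-1},t_k]$ and dividing by $\tau_k\Gamma(1-\alpha_n^*)$ gives $a_{n-k}^{(\alpha_n^*)}\ge\frac{1}{\Gamma(1-\alpha_n^*)}(t_n-t_{k-1})^{-\alpha_n^*}$.

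For the coefficients $b_{n-k}^{(\alpha_n^*)}$ the device I would use is to symmetrize the defining integral about the midpoint $t_{k-1/2}$. The change of variables $s\mapsto 2t_{k-1/2}-s$ on the left subinterval $[t_{k-1},t_{k-1/2}]$ recasts $\int_{t_{k-1}}^{t_k}(s-t_{k-1/2})\,w(s)\,ds$ as $\int_{t_{k-1/2}}^{t_k}(s-t_{k-1/2})\,[\,w(s)-w(2t_{k-1/2}-s)\,]\,ds$. Since $w$ is nondecreasing and $2t_{k-1/2}-s<s$ for $s>t_{k-1/2}$, the bracket is nonnegative, and strictly positive on a set of positive measure whenever $\alpha_n^*>0$, which gives $b_{n-k}^{(\alpha_n^*)}>0$ (it is identically $0$ in the degenerate case $\alpha_n^*=0$, which I would flag explicitly).

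To get the upper bound $b_{n-k}^{(\alpha_n^*)}\le\tfrac14 a_{n-k}^{(\alpha_n^*)}$ I would continue from the same symmetrized identity, bound $s-t_{k-1/2}\le\tau_k/2$ on $[t_{k-1/2},t_k]$, and set $P:=\int_{t_{k-1/2}}^{t_k}w$, $M:=\int_{t_{k-1}}^{t_{k-1/2}}w$ (the substitution also gives $\int_{t_{k-1/2}}^{t_k}w(2t_{k-1/2}-s)\,ds=M$). This yields $b_{n-k}^{(\alpha_n^*)}\le\frac{P-M}{(\tau_k+\tau_{k+1})\,\Gamma(1-\alpha_n^*)}$, while $a_{n-k}^{(\alpha_n^*)}=\frac{P+M}{\tau_k\,\Gamma(1-\alpha_n^*)}$ exactly, so $b_{n-k}^{(\alpha_n^*)}/a_{n-k}^{(\alpha_n^*)}\le\frac{\tau_k}{\tau_k+\tau_{k+1}}\cdot\frac{P-M}{P+M}$. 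The first factor is $\le1/2$ because the graded mesh satisfies $\rho_k\le1$, i.e. $\tau_{k+1}\ge\tau_k$. For the second factor, monotonicity of $w$ gives $P\le\frac{\tau_k}{2}(t_{n-\theta_n}-t_k)^{-\alpha_n^*}$ and $M\ge\frac{\tau_k}{2}(t_{n-\theta_n}-t_{k-1})^{-\alpha_n^*}$, hence $P/M\le(1+\tau_k/(t_{n-\theta_n}-t_k))^{\alpha_n^*}$; since $t_{n-\theta_n}-t_k>\tau_{k+1}\ge\tau_k$ when $k\le n-2$, and $t_{n-\theta_n}-t_{n-1}=(1-\theta_n)\tau_n\ge\tau_n/2\ge\tau_{n-1}/2$ when $k=n-1$ (using $\theta_n\le1/2$ and $\rho_{n-1}\le1$), in every case $P/M<3^{\alpha_n^*}<3$, so $\frac{P-M}{P+M}=\frac{P/M-1}{P/M+1}<\frac12$. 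Multiplying the two factors gives $b_{n-k}^{(\alpha_n^*)}<\tfrac14 a_{n-k}^{(\alpha_n^*)}$.

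The main obstacle is pinning the constant down to $1/4$. Each cheap simplification on its own is too lossy: dropping the negative part of the integrand and using $s-t_{k-1/2}\le\tau_k/2$ only gives $b_{n-k}^{(\alpha_n^*)}\le\frac{\tau_k}{\tau_k+\tau_{k+1}}a_{n-k}^{(\alpha_n^*)}\le\tfrac12 a_{n-k}^{(\alpha_n^*)}$, and symmetrizing without quantifying $P/M$ again only bounds $\frac{P-M}{P+M}$ by $1$. One genuinely needs both the cancellation produced by symmetrization (which turns $P$ into $P-M$) and the comparison $P/M<3$, and the two simultaneously reach their worst values in the single term $k=n-1$ with $\theta_n$ near $1/2$ and $\tau_{n-1}\approx\tau_n$, where $1/4$ is essentially attained — this is also precisely where the mesh restriction $\rho\le1$ and the constraint $\theta_n\in[0,1/2]$ are used.
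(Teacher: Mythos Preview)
Your argument is correct, but the route differs from the paper's. For the lower bound on $a_{n-k}^{(\alpha_n^*)}$ the two proofs coincide. For the bounds on $b_{n-k}^{(\alpha_n^*)}$, the paper integrates by parts once to obtain
\[
b_{n-k}^{(\alpha_n^*)}=\frac{\alpha_n^*}{\Gamma(1-\alpha_n^*)}\int_{t_{k-1}}^{t_k}\frac{(t_k-s)(s-t_{k-1})}{\tau_k(\tau_k+\tau_{k+1})}\,(t_{n-\theta_n}-s)^{-\alpha_n^*-1}\,ds,
\]
which gives $b_{n-k}^{(\alpha_n^*)}>0$ instantly; then $(t_k-s)(s-t_{k-1})\le\tau_k^2/4$ and $(t_{n-\theta_n}-s)^{-1}\le(t_{n-\theta_n}-t_k)^{-1}\le\bigl((1-\theta_n)\tau_{k+1}\bigr)^{-1}$ turn the integral back into $\tau_k a_{n-k}^{(\alpha_n^*)}$ with prefactor $\le\alpha_n^*/4\le1/4$, so no case split on $k$ is needed. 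Your symmetrization about $t_{k-1/2}$ is a legitimate alternative that avoids integration by parts and rewrites the same inequality as $\frac{\tau_k}{\tau_k+\tau_{k+1}}\cdot\frac{P-M}{P+M}\le\frac14$; the price is a separate treatment of $k=n-1$ to control $P/M$. Both proofs use exactly the same structural inputs ($\rho_k\le1$ and $\theta_n\le1/2$), and your observation that the case $k=n-1$ with $\theta_n$ near $1/2$ and $\rho_{n-1}$ near $1$ saturates the constant is correct and explains why the paper's bound is sharp as well.
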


\begin{proof}The definition (\ref{eq:bk_def}) of $b_{n-k}^{(\alpha_n^*)}$ and parts of integration imply that
\begin{equation}\label{eq:bk_def2}b_{n-k}^{(\alpha_n^*)}=\frac{\alpha_n^*}{\Gamma(1-\alpha_n^*)}\int_{t_{k-1}}^{t_k}\frac{(t_k-s)(s-t_{k-1})}{\tau_k(\tau_k+\tau_{k+1})}
(t_{n-\theta_n}-s)^{-\alpha_n^*-1}ds,
\end{equation}
Since $$(t_k-s)(s-t_{k-1})\leq \frac{\tau_k^2}{4}$$
 and
 $$(t_{n-\theta_n}-s)^{-1}\leq (t_{n-\theta_n}-t_k)^{-1}=\frac{1}{(1-\theta_n)\tau_{k+1}}>0,\ s\in [t_{k-1},t_k], 1\leq k\leq n-1$$
and $1-\theta_n\leq 1/2$,
then we have
\begin{eqnarray*}
 b_{n-k}^{(\alpha_n^*)}\leq \frac{\alpha_n^*\tau_k}{2\tau_{k+1}(\tau_k+\tau_{k+1})}\frac{1}{\Gamma(1-\alpha_n^*) } \int_{t_{k-1}}^{t_k}\
(t_{n-\theta_n}-s)^{-\alpha_n^*}ds
=\frac{\alpha_n^*\tau_k^2}{2\tau_{k+1}(\tau_k+\tau_{k+1})}a_{n-k}^{(\alpha_n^*)}\leq\frac{1}{4}a_{n-k}^{(\alpha_n^*)}.
\end{eqnarray*}
The definition (\ref{eq:ak_def}) of $a_{n-k}^{(\alpha_n^*)}$ yields
$$a_{n-k}^{(\alpha_n^*)}\geq \frac{1}{\Gamma(1-\alpha_n^*)}(t_{n-\theta_n}-t_{k-1})^{-\alpha_n^*}\geq \frac{1}{\Gamma(1-\alpha_n^*)}(t_n-t_{k-1})^{-\alpha_n^*}.$$
It completes the proof.
\end{proof}

Let
$$I_{n-k}^{(\alpha_n^*)}:=\frac{\alpha_n^*}{\Gamma(1-\alpha_n^*)}\int_{t_{k-1}}^{t_k}\frac{t_k-s}{\tau_k}(t_{n-\theta_n}-s)^{-\alpha_n^*-1}ds, 1\leq k\leq n-1,$$
and
$$J_{n-k}^{(\alpha_n^*)}:=\frac{\alpha_n^*}{\Gamma(1-\alpha_n^*)}\int_{t_{k-1}}^{t_k}\frac{s-t_{k-1}}{\tau_k}(t_{n-\theta_n}-s)^{-\alpha_n^*-1}ds, 1\leq k\leq n-1.$$

\begin{lemma}\label{lem:Ink_bnk_Jnk}
Let the parameter $\theta_n=\alpha_n/2, \alpha_n\in [0,1)$, and $\alpha_n^*=\alpha(t_{n-\theta_n})$. The positive coefficients $b_{n-k}^{(\alpha_n^*)}, I_{n-k}^{(\alpha_n^*)}$ and $J_{n-k}^{(\alpha_n^*)}$  satisfy

I) for $1\leq k\leq n-1\ (2\leq n\leq N)$
\begin{equation*}\label{eq:Ink_bnk}
I_{n-k}^{(\alpha_n^*)}\geq \frac{\rho_k+1}{\rho_k}b_{n-k}^{(\alpha_n^*)}.
\end{equation*}

II) for $1\leq k\leq n-2\ (3\leq n\leq N)$
\begin{equation*}\label{eq:Ink_Ink}
\rho_kI_{n-k-1}^{(\alpha_n^*)}\geq I_{n-k}^{(\alpha_n^*)}.
\end{equation*}

III) for $1\leq k\leq n-1\ (2\leq n\leq N)$
\begin{equation*}\label{eq:Jnk_Ink}
J_{n-k}^{(\alpha_n^*)}\geq I_{n-k}^{(\alpha_n^*)}.
\end{equation*}
\end{lemma}

\begin{proof}First prove I). From the definition of $I_{n-k}^{(\alpha_n^*)}$ and (\ref{eq:bk_def2}), we have
$$b_{n-k}^{(\alpha_n^*)}\leq \frac{1}{\tau_k+\tau_{k+1}}\frac{\alpha_n^*}{\Gamma(1-\alpha_n^*)}\int_{t_{k-1}}^{t_k}\frac{t_k-s}{(t_{n-\theta_n}-s)^{\alpha_n^*+1}}ds= \frac{\rho_k}{\rho_k+1}I_{n-k}^{(\alpha_n^*)}.$$
Now consider II). Letting $s=t_{k-1}+\xi \tau_k$ and integration by substitution yield
$$I_{n-k}^{(\alpha_n^*)}=\frac{\alpha_n^*\tau_k}{\Gamma(1-\alpha_n^*)}\int_0^1(1-\xi)(t_{n-\theta_n}-t_{k-1}-\xi\tau_k)^{-\alpha_n^*-1}d\xi.$$
Similarly, we can obtain
$$I_{n-k-1}^{(\alpha_n^*)}=\frac{\alpha_n^*\tau_{k+1}}{\Gamma(1-\alpha_n^*)}\int_0^1(1-\xi)(t_{n-\theta_n}-t_k-\xi\tau_{k+1})^{-\alpha_n^*-1}d\xi.$$
Since $t_{n-\theta_n}-t_{k-1}-\xi\tau_k>t_{n-\theta_n}-t_k-\xi\tau_{k+1}>(1-\xi)\tau_{k+1}>0$ for $\xi\in [0,1], n\geq k+1$, then
$$I_{n-k-1}^{(\alpha_n^*)}\geq \frac{\tau_{k+1}}{\tau_k}I_{n-k}^{(\alpha_n^*)}.$$
Finally consider III). Integration by parts satisfy
\begin{eqnarray*}
J_{n-k}^{(\alpha_n^*)}- I_{n-k}^{(\alpha_n^*)}=\frac{\alpha_n^*}{\Gamma(1-\alpha_n^*){\tau_k}}\int_{t_{k-1}}^{t_k}\frac{2s-t_{k-1}-t_k}{(t_{n-\theta_n}-s)^{\alpha_n^*+1}}ds
=\frac{\alpha_n^*(1+\alpha_n^*)}{\Gamma(1-\alpha_n^*){\tau_k}}\int_{t_{k-1}}^{t_k}\frac{(s-t_{k-1})(t_k-s)}{(t_{n-\theta_n}-s)^{\alpha_n^*+2}}ds,
\end{eqnarray*}
Then we have the inequality \eqref{eq:Jnk_Ink}.

\end{proof}

\begin{lemma}\label{lem:akn_akn} Let the parameter $\theta_n=\alpha_n/2, \alpha_n\in [0,1)$, and $\alpha_n^*=\alpha(t_{n-\theta_n})$. The  coefficients $a_{n-k}^{(\alpha_n^*)}$ and $b_{n-k}^{(\alpha_n^*)}$ satisfy
 \begin{equation*} a_{n-k-1}^{(\alpha_n^*)}-a_{n-k}^{(\alpha_n^*)}\geq \left\{\begin{array}{ll}
 b_{n-2}^{(\alpha_n^*)}+\frac{3}{2}I_{n-1}^{(\alpha_n^*)},& k=1,\\
 b_{n-k-1}^{(\alpha_n^*)}+\rho_{k-1}b_{n-k+1}^{(\alpha_n^*)}+I_{n-k}^{(\alpha_n^*)},& k>1,
 \end{array}\right.
 \end{equation*}
 for $1\leq k\leq n-2, 3\leq n\leq N$, and for $k=n-1, 2\leq n\leq N$, if the condition $\alpha_n^*\geq\theta_n$ holds, then we have
  \begin{equation*} a_{0}^{(\alpha_n^*)}-a_{1}^{(\alpha_n^*)}\geq \left\{\begin{array}{ll}
  I_{1}^{(\alpha_n^*)},& n=2,\\
  \rho_{n-2}b_{2}^{(\alpha_n^*)}+\frac{1}{2}I_{1}^{(\alpha_n^*)},& n>2.
 \end{array}\right.
 \end{equation*}
\end{lemma}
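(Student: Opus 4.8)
The plan is to reduce the two chains of inequalities to the one-cell sign and monotonicity properties collected in Lemma~\ref{lem:Ink_bnk_Jnk}, supplemented by a single explicit evaluation at the last cell. Abbreviate $\varphi_n(s):=(t_{n-\theta_n}-s)^{-\alpha_n^*}/\Gamma(1-\alpha_n^*)$, so that for $1\le k\le n-1$ the coefficient $a_{n-k}^{(\alpha_n^*)}$ is simply the average of $\varphi_n$ over $[t_{k-1},t_k]$. Using $\partial_s\big[(t_{n-\theta_n}-s)^{-\alpha_n^*}\big]=\alpha_n^*(t_{n-\theta_n}-s)^{-\alpha_n^*-1}$ and integrating by parts in the definitions of $I_{n-k}^{(\alpha_n^*)}$ and $J_{n-k}^{(\alpha_n^*)}$, the linear weights produce boundary terms $-\varphi_n(t_{k-1})$, $+\varphi_n(t_k)$ and leave behind exactly $a_{n-k}^{(\alpha_n^*)}$, so that for every full cell $1\le k\le n-1$
\begin{equation*}
I_{n-k}^{(\alpha_n^*)}=a_{n-k}^{(\alpha_n^*)}-\varphi_n(t_{k-1}),\qquad J_{n-k}^{(\alpha_n^*)}=\varphi_n(t_k)-a_{n-k}^{(\alpha_n^*)}.
\end{equation*}
When $1\le k\le n-2$ both $a_{n-k-1}^{(\alpha_n^*)}$ and $a_{n-k}^{(\alpha_n^*)}$ are full-cell averages, and subtracting these identities at indices $k+1$ and $k$ makes $\varphi_n(t_k)$ cancel, giving the workhorse identity $a_{n-k-1}^{(\alpha_n^*)}-a_{n-k}^{(\alpha_n^*)}=I_{n-k-1}^{(\alpha_n^*)}+J_{n-k}^{(\alpha_n^*)}$.

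With this identity, the case $1\le k\le n-2$ follows purely from Lemma~\ref{lem:Ink_bnk_Jnk}. Part~I on the cell $[t_k,t_{k+1}]$ gives $b_{n-k-1}^{(\alpha_n^*)}\le\tfrac{\rho_{k+1}}{\rho_{k+1}+1}I_{n-k-1}^{(\alpha_n^*)}\le\tfrac12 I_{n-k-1}^{(\alpha_n^*)}$, hence $I_{n-k-1}^{(\alpha_n^*)}-b_{n-k-1}^{(\alpha_n^*)}\ge\tfrac12 I_{n-k-1}^{(\alpha_n^*)}$; part~III gives $J_{n-k}^{(\alpha_n^*)}\ge I_{n-k}^{(\alpha_n^*)}$; and chaining part~I with part~II (twice) gives $\rho_{k-1}b_{n-k+1}^{(\alpha_n^*)}\le\tfrac{\rho_{k-1}}{2}I_{n-k+1}^{(\alpha_n^*)}\le\tfrac{\rho_{k-1}^2\rho_k}{2}I_{n-k-1}^{(\alpha_n^*)}\le\tfrac12 I_{n-k-1}^{(\alpha_n^*)}$ since $\rho_{k-1},\rho_k\le1$. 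Inserting these three estimates into the workhorse identity yields the stated bound for $k>1$; the sub-case $k=1$ is handled identically, the coefficient $\tfrac32$ in front of $I_{n-1}^{(\alpha_n^*)}$ being absorbed by the additional facts $J_{n-1}^{(\alpha_n^*)}\ge I_{n-1}^{(\alpha_n^*)}$ (part~III) and $I_{n-2}^{(\alpha_n^*)}\ge I_{n-1}^{(\alpha_n^*)}$ (part~II).

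The cell $k=n-1$ is the delicate one, because there $a_0^{(\alpha_n^*)}$ is an average only over the partial interval $[t_{n-1},t_{n-\theta_n}]$ and the workhorse identity is unavailable. Here I would evaluate $a_0^{(\alpha_n^*)}$ outright: $a_0^{(\alpha_n^*)}=(1-\theta_n)^{1-\alpha_n^*}\tau_n^{-\alpha_n^*}/\Gamma(2-\alpha_n^*)$, whereas $\varphi_n(t_{n-1})=(1-\alpha_n^*)(1-\theta_n)^{-\alpha_n^*}\tau_n^{-\alpha_n^*}/\Gamma(2-\alpha_n^*)$, so their ratio is $(1-\theta_n)/(1-\alpha_n^*)$, which is $\ge1$ exactly because $\alpha_n^*\ge\theta_n$ — this is the only place the standing hypothesis enters. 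Consequently $a_0^{(\alpha_n^*)}\ge\varphi_n(t_{n-1})$, whence $a_0^{(\alpha_n^*)}-a_1^{(\alpha_n^*)}\ge\varphi_n(t_{n-1})-a_1^{(\alpha_n^*)}=J_1^{(\alpha_n^*)}\ge I_1^{(\alpha_n^*)}$ by part~III, which is the $n=2$ assertion; for $n>2$ one further notes $\tfrac12 I_1^{(\alpha_n^*)}\ge\rho_{n-2}b_2^{(\alpha_n^*)}$ (again by parts~I and~II), so that $a_0^{(\alpha_n^*)}-a_1^{(\alpha_n^*)}\ge J_1^{(\alpha_n^*)}\ge I_1^{(\alpha_n^*)}\ge\tfrac12 I_1^{(\alpha_n^*)}+\rho_{n-2}b_2^{(\alpha_n^*)}$. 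I expect this last cell to be the main obstacle: a naive midpoint/concavity bound for the partial-cell average $a_0^{(\alpha_n^*)}$ is too lossy to dominate $a_1^{(\alpha_n^*)}$ together with the $I$- and $b$-terms, and one genuinely needs the closed form of $a_0^{(\alpha_n^*)}$ and the hypothesis $\alpha_n^*\ge\theta_n$ to obtain the clean comparison $a_0^{(\alpha_n^*)}\ge\varphi_n(t_{n-1})$; the remainder is bookkeeping with Lemma~\ref{lem:Ink_bnk_Jnk}.
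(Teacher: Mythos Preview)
Your proposal is correct and follows essentially the same route as the paper: the paper likewise integrates by parts to obtain $a_{n-k}^{(\alpha_n^*)}=\varphi_n(t_{k-1})+I_{n-k}^{(\alpha_n^*)}=\varphi_n(t_k)-J_{n-k}^{(\alpha_n^*)}$, deduces the identity $a_{n-k-1}^{(\alpha_n^*)}-a_{n-k}^{(\alpha_n^*)}=I_{n-k-1}^{(\alpha_n^*)}+J_{n-k}^{(\alpha_n^*)}$ for $1\le k\le n-2$, and finishes with Lemma~\ref{lem:Ink_bnk_Jnk}; for $k=n-1$ it computes $a_0^{(\alpha_n^*)}-a_1^{(\alpha_n^*)}=\frac{\alpha_n^*-\theta_n}{1-\alpha_n^*}\,\varphi_n(t_{n-1})+J_1^{(\alpha_n^*)}$ explicitly, which is exactly your comparison $a_0^{(\alpha_n^*)}\ge\varphi_n(t_{n-1})$ written as a difference. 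The only cosmetic discrepancy is bookkeeping in the $k>1$ case (you absorb $\rho_{k-1}b_{n-k+1}^{(\alpha_n^*)}$ into $\tfrac12 I_{n-k-1}^{(\alpha_n^*)}$, the paper absorbs it into $J_{n-k}^{(\alpha_n^*)}$), but the underlying inequalities from Lemma~\ref{lem:Ink_bnk_Jnk} are the same.
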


\begin{proof} The definition (\ref{eq:ak_def}) of $a_{n-k}^{(\alpha_n^*)}$ and integration by parts imply that for $1\leq k\leq n-1$
\begin{equation}\label{eq:ak_Jk}a_{n-k}^{(\alpha_n^*)}=\frac{1}{\tau_{k}}\int_{t_{k-1}}^{t_{k}}\frac{ (t_{n-\theta_n}-s)^{-\alpha_n^*}}{\Gamma(1-\alpha_n^*)}d(s-t_{k-1})=
\frac{ (t_{n-\theta_n}-t_k)^{-\alpha_n^*}}{\Gamma(1-\alpha_n^*)}-J_{n-k}^{(\alpha_n^*)},\end{equation}
and
\begin{equation}\label{eq:ak_Ik}a_{n-k}^{(\alpha_n^*)}=-\frac{1}{\tau_{k}}\int_{t_{k-1}}^{t_{k}}\frac{ (t_{n-\theta_n}-s)^{-\alpha_n^*}}{\Gamma(1-\alpha_n^*)}d(t_k-s)=
\frac{(t_{n-\theta_n}-t_{k-1})^{-\alpha_n^*}}{\Gamma(1-\alpha_n^*)}+I_{n-k}^{(\alpha_n^*)}.\end{equation}
For  $1\leq k\leq n-2$ and $3\leq n\leq N$, (\ref{eq:ak_Jk}), (\ref{eq:ak_Ik}) and the definition of $a_{n-k}^{(\alpha_n^*)}$
 yield
\begin{eqnarray*}
 a_{n-k-1}^{(\alpha_n^*)}-a_{n-k}^{(\alpha_n^*)}= I_{n-k-1}^{(\alpha_n^*)}+J_{n-k}^{(\alpha_n^*)}.
\end{eqnarray*}
For the case $k=1$ and $3\leq n\leq N$, Lemma \ref{lem:Ink_bnk_Jnk} and $\rho_k\leq 1$ satisfy
$$I_{n-2}^{(\alpha_n^*)}\geq \frac{\rho_2+1}{\rho_2}b_{n-2}^{(\alpha_n^*)}\geq 2b_{n-2}^{(\alpha_n^*)},\ \
I_{n-2}^{(\alpha_n^*)}\geq \frac{1}{\rho_{1}}I_{n-1}^{(\alpha_n^*)}\geq I_{n-1}^{(\alpha_n^*)}\ \
\text{and}\ \
J_{n-1}^{(\alpha_n^*)}\geq I_{n-1}^{(\alpha_n^*)},$$
and then we have
$$ a_{n-2}^{(\alpha_n^*)}-a_{n-1}^{(\alpha_n^*)}\geq b_{n-2}^{(\alpha_n^*)}+\frac{3}{2}I_{n-1}^{(\alpha_n^*)}.$$
Similar, for the case $k\geq 2$ and $3\leq n\leq N$, Lemma \ref{lem:Ink_bnk_Jnk} and $\rho_k\leq 1$ yield
$$I_{n-k}^{(\alpha_n^*)}\geq  \frac{\rho_k+1}{\rho_k}b_{n-k}^{(\alpha_n^*)} \geq  2b_{n-k}^{(\alpha_n^*)},\ \
I_{n-k}^{(\alpha_n^*)}\geq \frac{1}{\rho_{k-1}}I_{n-k+1}^{(\alpha_n^*)}\geq I_{n-k+1}^{(\alpha_n^*)}$$
and
$$
J_{n-k}^{(\alpha_n^*)}\geq I_{n-k}^{(\alpha_n^*)}\geq \frac{1}{2}I_{n-k+1}^{(\alpha_n^*)}+\frac{1}{2}I_{n-k}^{(\alpha_n^*)}\geq b_{n-k+1}^{(\alpha_n^*)}+\frac{1}{2}I_{n-k}^{(\alpha_n^*)}\geq \rho_{k-1}b_{n-k+1}^{(\alpha_n^*)}+\frac{1}{2}I_{n-k}^{(\alpha_n^*)}.$$
Thus we have
$$a_{n-k-1}^{(\alpha_n^*)}-a_{n-k}^{(\alpha_n^*)}\geq
 b_{n-k-1}^{(\alpha_n^*)}+\rho_{k-1}b_{n-k+1}^{(\alpha_n^*)}+I_{n-k}^{(\alpha_n^*)}.$$
Now consider the case $k=n-1$ and $2\leq n\leq N$. By directly calculating the integration in $a_0^{(\alpha_n^*)}$ and (\ref{eq:ak_Jk}) for
$k=n-1$, we have
\begin{equation}\label{eq:a0_a1}a_0^{(\alpha_n^*)}-a_1^{(\alpha_n^*)}=
\frac{(\alpha_n^*-\theta_n)}{(1-\alpha_n^*)}\frac{(t_{n-\theta_n}-t_{n-1})^{-\alpha_n^*}}{\Gamma(1-\alpha_n^*)}+J_1^{(\alpha_n^*)}.
\end{equation}
For $k=1$ and $n=2$, if the condition $\alpha_n^*\geq\theta_n$ holds, then  Lemma \ref{lem:Ink_bnk_Jnk} satisfies
$$ a_0^{(\alpha_n^*)}-a_1^{(\alpha_n^*)}\geq J_1^{(\alpha_n^*)}\geq I_1^{(\alpha_n^*)}.$$
For $k=1$ and $n\geq 3$, Lemma \ref{lem:Ink_bnk_Jnk}
yields
\begin{equation}\label{eq:J1_b2}J_1^{(\alpha_n^*)}\geq I_1^{(\alpha_n^*)}\geq I_2^{(\alpha_n^*)}\geq 2\rho_{n-2}b_2^{(\alpha_n^*)},
\end{equation}
and then it follows for the condition $\alpha_n^*\geq\theta_n$ that
$$ a_0^{(\alpha_n^*)}-a_1^{(\alpha_n^*)}\geq\rho_{n-2}b_2^{(\alpha_n^*)}+\frac{1}{2}I_1^{(\alpha_n^*)}.$$
\end{proof}

\begin{lemma}\label{thm:ckn_pro}
Let the parameter $\theta_n=\alpha_n/2, \alpha_n\in [0,1)$, and $\alpha_n^*=\alpha(t_{n-\theta_n})$. The  coefficients $c_{n-k,n}^{(\alpha_n^*)}$ satisfy

I) The  coefficients $c_{n-k,n}^{(\alpha_n^*)}$ are bounded
 \begin{equation}\label{eq:c0n_pro} c_{0,n}^{(\alpha_n^*)}\leq \frac{9}{8\tau_n}\int_{t_{n-1}}^{t_n}\frac{(t_n-s)^{-\alpha_n^*}}{\Gamma(1-\alpha_n^*)}ds,\ \ 1\leq n
 \end{equation}
 and
  \begin{equation}\label{eq:cnk_n1}c_{n-k,n}^{(\alpha_n^*)}\geq \frac{1}{(1+3^r)\tau_k}\int_{t_{k-1}}^{t_k}\frac{(t_n-s)^{-\alpha_n^*}}{\Gamma(1-\alpha_n^*)}ds,\quad \text{for}\ \ 1\leq k\leq n,\ 1\leq n.
 \end{equation}

II) If $\alpha_n^*\geq \alpha_n/2$ holds, the positive coefficients $c_{n-k,n}^{(\alpha_n^*)}$ are monotone,
\begin{equation}\label{eq:ckn_prop} c_{0,n}^{(\alpha_n^*)}\geq c_{1,n}^{(\alpha_n^*)}\geq \cdots\geq c_{n-1,n}^{(\alpha_n^*)},\quad 1\leq n\leq N.\end{equation}

III) If the condition $\alpha_n^*\geq \alpha_n$ holds, then  the following inequality holds
\begin{equation}\label{eq:c0n_prop} \frac{1-2\theta_n}{1-\theta_n}c_{0,n}^{(\alpha_n^*)}-c_{1,n}^{(\alpha_n^*)}> 0,\quad 2\leq n\leq N.\end{equation}
\end{lemma}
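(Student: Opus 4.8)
The plan is to read all three parts off the explicit representation (\ref{eq:ckn_def}) of $c_{n-k,n}^{(\alpha_n^*)}$ in terms of the kernels $a_{n-k}^{(\alpha_n^*)},b_{n-k}^{(\alpha_n^*)}$, feeding in the coefficient estimates of Lemmas~\ref{lem:Gamma_est}--\ref{lem:akn_akn} together with the elementary facts $\theta_n=\alpha_n/2\le 1/2$, $\rho_k\le 1$ and $t_{n-1}\ge 2^{-r}t_n$ from (\ref{eq:tn1_tn}). Two closed forms will be used throughout. Since $t_{n-\theta_n}-t_{n-1}=(1-\theta_n)\tau_n$, an explicit integration gives $a_0^{(\alpha_n^*)}=(1-\theta_n)^{1-\alpha_n^*}\,\frac{1}{\tau_n}\int_{t_{n-1}}^{t_n}\frac{(t_n-s)^{-\alpha_n^*}}{\Gamma(1-\alpha_n^*)}\,ds$; writing $\mathcal{K}_n:=(t_{n-\theta_n}-t_{n-1})^{-\alpha_n^*}/\Gamma(1-\alpha_n^*)$ for brevity, the same computation yields $a_0^{(\alpha_n^*)}=\tfrac{1-\theta_n}{1-\alpha_n^*}\mathcal{K}_n$, while integration by parts (the identity (\ref{eq:ak_Jk}) with $k=n-1$) gives $a_1^{(\alpha_n^*)}=\mathcal{K}_n-J_1^{(\alpha_n^*)}$.

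For Part I, I treat $k=n$ and $1\le k\le n-1$ separately. When $k=n\ge 2$ one has $c_{0,n}^{(\alpha_n^*)}=a_0^{(\alpha_n^*)}+\rho_{n-1}b_1^{(\alpha_n^*)}$; since $(1-\theta_n)^{1-\alpha_n^*}\le 1$, the first summand is bounded above by the integral appearing in (\ref{eq:c0n_pro}), while the bound $b_1^{(\alpha_n^*)}\le\tfrac{\alpha_n^*\rho_{n-1}^2}{2(\rho_{n-1}+1)}a_1^{(\alpha_n^*)}$ extracted from the proof of Lemma~\ref{lem:ak_bk}, combined with $a_1^{(\alpha_n^*)}\le\mathcal{K}_n$, $\rho_{n-1}\le 1$, $\alpha_n^*(1-\alpha_n^*)\le\tfrac14$ and $(1-\theta_n)^{-\alpha_n^*}\le 2$, shows $\rho_{n-1}b_1^{(\alpha_n^*)}$ is at most $\tfrac18$ of that same integral; adding gives (\ref{eq:c0n_pro}), the case $n=1$ being immediate. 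For the lower bound, $(1-\theta_n)^{1-\alpha_n^*}\ge\tfrac12$ gives $c_{0,n}^{(\alpha_n^*)}\ge a_0^{(\alpha_n^*)}\ge\tfrac12$ times the integral, which exceeds its $1/(1+3^r)$-multiple since $1+3^r\ge 4$; and for $1\le k\le n-1$ each branch of (\ref{eq:ckn_def}) satisfies $c_{n-k,n}^{(\alpha_n^*)}\ge a_{n-k}^{(\alpha_n^*)}-b_{n-k}^{(\alpha_n^*)}\ge\tfrac34 a_{n-k}^{(\alpha_n^*)}$ by Lemma~\ref{lem:ak_bk}, while $0<t_{n-\theta_n}-s<t_n-s$ on $[t_{k-1},t_k]$ forces $a_{n-k}^{(\alpha_n^*)}\ge\tfrac{1}{\tau_k}\int_{t_{k-1}}^{t_k}\tfrac{(t_n-s)^{-\alpha_n^*}}{\Gamma(1-\alpha_n^*)}\,ds$; since $\tfrac34\ge 1/(1+3^r)$ this yields (\ref{eq:cnk_n1}).

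For Part II, I index by $j=n-k$ and prove $c_{j,n}^{(\alpha_n^*)}-c_{j+1,n}^{(\alpha_n^*)}\ge 0$ for $0\le j\le n-2$. Substituting (\ref{eq:ckn_def}) writes each such difference as $\bigl(a_{n-k}^{(\alpha_n^*)}-a_{n-k+1}^{(\alpha_n^*)}\bigr)$ plus a combination of $b$-coefficients; the lower bounds of Lemma~\ref{lem:akn_akn} on the first differences of the $a$'s supply exactly the $+b$ terms needed to absorb the negative $-b$ terms, leaving a remainder that is a sum of the strictly positive quantities $I_{\cdot}^{(\alpha_n^*)},b_{\cdot}^{(\alpha_n^*)}$ of Lemma~\ref{lem:Ink_bnk_Jnk}. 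The hypothesis $\alpha_n^*\ge\theta_n$ enters only in the single case $j=0$, namely $c_{0,n}^{(\alpha_n^*)}-c_{1,n}^{(\alpha_n^*)}$, where the second estimate of Lemma~\ref{lem:akn_akn} (which requires it) is invoked; all other differences need no sign condition. This gives (\ref{eq:ckn_prop}). The only real nuisance here is the case bookkeeping: separating $n=2$, $n=3$ and $n\ge 4$, and $j=0$, $1\le j\le n-3$, $j=n-2$, and lining up the shifted indices $n-k,\ n-k+1$ correctly when quoting Lemma~\ref{lem:akn_akn}.

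For Part III, using $1-2\theta_n=1-\alpha_n$ together with $c_{0,n}^{(\alpha_n^*)}=\tfrac{1-\theta_n}{1-\alpha_n^*}\mathcal{K}_n+\rho_{n-1}b_1^{(\alpha_n^*)}$ and $c_{1,n}^{(\alpha_n^*)}=\mathcal{K}_n-J_1^{(\alpha_n^*)}+\rho_{n-2}b_2^{(\alpha_n^*)}-b_1^{(\alpha_n^*)}$, a direct computation collapses the left side of (\ref{eq:c0n_prop}) to
\[
\frac{1-2\theta_n}{1-\theta_n}c_{0,n}^{(\alpha_n^*)}-c_{1,n}^{(\alpha_n^*)}
=\frac{\alpha_n^*-\alpha_n}{1-\alpha_n^*}\,\mathcal{K}_n
+\bigl(J_1^{(\alpha_n^*)}-\rho_{n-2}b_2^{(\alpha_n^*)}\bigr)
+b_1^{(\alpha_n^*)}+\frac{1-2\theta_n}{1-\theta_n}\rho_{n-1}b_1^{(\alpha_n^*)},
\]
with the convention that the term $\rho_{n-2}b_2^{(\alpha_n^*)}$ is dropped when $n=2$ (where $c_{1,2}^{(\alpha_2^*)}=a_1^{(\alpha_2^*)}-b_1^{(\alpha_2^*)}$). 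Here $\tfrac{1-2\theta_n}{1-\theta_n}>0$ because $\alpha_n<1$ forces $\theta_n<\tfrac12$; the first term is nonnegative precisely under the hypothesis $\alpha_n^*\ge\alpha_n$, which is exactly what condition (\ref{cond}) with $\kappa=\alpha_n$ guarantees; the bracket is positive since $J_1^{(\alpha_n^*)}\ge I_1^{(\alpha_n^*)}\ge 2\rho_{n-2}b_2^{(\alpha_n^*)}$ by Lemma~\ref{lem:Ink_bnk_Jnk} (cf. (\ref{eq:J1_b2})); and the last two terms are nonnegative with $b_1^{(\alpha_n^*)}>0$. Hence the expression is strictly positive, which is (\ref{eq:c0n_prop}). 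Overall the only genuinely delicate step is the case analysis of Part II, Parts I and III being routine substitutions of the closed forms for $a_0^{(\alpha_n^*)},a_1^{(\alpha_n^*)}$ into the coefficient inequalities already proved in Lemmas~\ref{lem:ak_bk}--\ref{lem:akn_akn}.
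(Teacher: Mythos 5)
Your proposal is correct and follows essentially the same route as the paper: read the three claims off the definition (\ref{eq:ckn_def}) using the closed forms of $a_0^{(\alpha_n^*)},a_1^{(\alpha_n^*)}$, the bounds of Lemmas \ref{lem:ak_bk}--\ref{lem:akn_akn} (with the hypothesis $\alpha_n^*\geq\theta_n$ entering only through the $a_0^{(\alpha_n^*)}-a_1^{(\alpha_n^*)}$ estimate), and for Part III exactly the identity (\ref{eq:a0_a1_1}) together with (\ref{eq:J1_b2}). The only cosmetic deviations are in Part I, where you bound $c_{n-k,n}^{(\alpha_n^*)}\geq\tfrac34 a_{n-k}^{(\alpha_n^*)}$ via $b_{n-k}^{(\alpha_n^*)}\leq\tfrac14 a_{n-k}^{(\alpha_n^*)}$ instead of the paper's direct kernel computation (yielding $1/(1+2^r)$) and route the $b_1^{(\alpha_n^*)}$ bound through $a_1^{(\alpha_n^*)}\leq\mathcal{K}_n$; both give the stated constants, so the argument is sound.
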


\begin{proof} First consider the case I).
The similar inference is referred to the proof of  Theorem 2.2 in \cite{LIAO2021567}.
For the case $n=1$, directly computing and $1/2<1-\theta_n\leq 1, 0<1-\alpha_n^*\leq 1$ yields
$$c_{0,1}^{(\alpha_1^*)}=a_0^{(\alpha_1^*)}=\frac{(1-\theta_n)^{1-\alpha_n^*}}{\Gamma(2-\alpha_n^*)\tau_1^{\alpha_n^*}}<\frac{1}{\Gamma(2-\alpha_n^*)\tau_1^{\alpha_n^*}}.$$
For the case $n\geq 2$, $c_{0,n}^{(\alpha_n^*)}=a_0^{(\alpha_n^*)}+\rho_{n-1}b_1^{(\alpha_n^*)}$.
The definition of $a_0^{(\alpha_n^*)}$ in (\ref{eq:ckn_def}) and $t_{n-\theta_n}-t_{n-1}\leq t_n-t_{n-1}$ satisfy
$$a_0^{(\alpha_n^*)}=\frac{(t_{n-\theta_n}-t_{n-1})^{1-\alpha_n^*}}{\Gamma(2-\alpha_n^*)\tau_n}\leq \frac{(t_n-t_{n-1})^{1-\alpha_n^*}}{\Gamma(2-\alpha_n^*)\tau_n}=
\frac{1}{\Gamma(2-\alpha_n^*)\tau_n^{\alpha_n^*}}.$$
The similar inference in Lemma \ref{lem:ak_bk} implies that
$$b_1^{(\alpha_n^*)}\leq\frac{\alpha_n^*\tau_{n-1}^2}{2(\tau_n+\tau_{n-1})\Gamma(1-\alpha_n^*) }
(t_{n-\theta_n}-t_{n-1})^{-\alpha_n^*-1} \leq \frac{1}{8\Gamma(2-\alpha_n^*)\tau_n^{\alpha_n^*}}.$$
Thus we have
$$c_{0,n}^{(\alpha_n^*)}\leq \frac{9}{8\Gamma(2-\alpha_n^*)\tau_n^{\alpha_n^*}}=\frac{9}{8\tau_n}\int_{t_{n-1}}^{t_n}\frac{(t_n-s)^{-\alpha_n^*}}{\Gamma(1-\alpha_n^*)}ds.$$
and can obtain (\ref{eq:c0n_pro}).
For the case $1\leq k\leq n-1, n\geq 2$, the definition (\ref{eq:ckn_def})
 of $c_{n-k,n}^{(\alpha_n^*)}$,
 $b_{n-k+1}^{(\alpha_n^*)}>0$ and $\tau_k\geq 2^{-r}\tau_{k+1}$ yield
\begin{eqnarray*}c_{n-k,n}^{(\alpha_n^*)}\geq a_{n-k}^{(\alpha_n^*)}-b_{n-k}^{(\alpha_n^*)}&=&\int_{t_{k-1}}^{t_k}\frac{(t_{k+1}+t_k-2s)(t_{n-\theta_n}-s)^{-\alpha_n^*}}{\Gamma(1-\alpha_n^*)
\tau_k(\tau_k+\tau_{k+1})}ds\\
&\geq & \frac{1}{\tau_k+\tau_{k+1}}\int_{t_{k-1}}^{t_k}\frac{(t_{n-\theta_n}-s)^{-\alpha_n^*}}{\Gamma(1-\alpha_n^*)}ds\\
&\geq & \frac{1}{(1+2^r)\tau_k}\int_{t_{k-1}}^{t_k}\frac{(t_{n-\theta_n}-s)^{-\alpha_n^*}}{\Gamma(1-\alpha_n^*)}ds.
\end{eqnarray*}
Similarly, we have
$$c_{0,n}^{(\alpha_n^*)}\geq\frac{1}{\tau_n}\int_{t_{n-1}}^{t_{n-\theta_n}}\frac{(t_{n-\theta_n}-s)^{-\alpha_n^*}}{\Gamma(1-\alpha_n^*)} ds=\frac{t_{n-\theta_n}^{1-\alpha_n^*}}{\tau_n\Gamma(2-\alpha_n^*)}
 \geq  %\frac{t_{n}^{1-\alpha_n^*}}{2\tau_n\Gamma(2-\alpha_n^*)}= 
 \frac{1}{2\tau_n}\int_{t_{n-1}}^{t_n}\frac{(t_n-s)^{-\alpha_n^*}}{\Gamma(1-\alpha_n^*)}ds.
$$
Thus, we can obtain (\ref{eq:cnk_n1}).
Secondly consider the case II).
The definition (\ref{eq:ckn_def}) of $c_{n-k,n}^{(\alpha_n^*)}$ implies that
$$c_{n-k-1,n}^{(\alpha_n^*)}-c_{n-k,n}^{(\alpha_n^*)}=\left\{\begin{array}{ll}
a_0^{(\alpha_n^*)}-a_1^{(\alpha_n^*)}+b_1^{(\alpha_n^*)}, & k=1, n=2,\\
a_0^{(\alpha_n^*)}-a_1^{(\alpha_n^*)}-\rho_{n-2}b_2^{(\alpha_n^*)}+(\rho_{n-1}+1)b_1^{(\alpha_n^*)},&  k=n-1, n\geq 3,\\
a_{n-2}^{(\alpha_n^*)}-a_{n-1}^{(\alpha_n^*)}-b_{n-2}^{(\alpha_n^*)}+(1+\rho_1)b_{n-1}^{(\alpha_n^*)},& k=1, n\geq 3,\\
a_{n-k-1}^{(\alpha_n^*)}-a_{n-k}^{(\alpha_n^*)}-b_{n-k-1}^{(\alpha_n^*)}-\rho_{k-1}b_{n-k+1}^{(\alpha_n^*)}+(1+\rho_k)b_{n-k}^{(\alpha_n^*)},&\text{else}.\end{array}\right.
$$
Lemma \ref{lem:akn_akn} yields
$$c_{n-k-1,n}^{(\alpha_n^*)}-c_{n-k,n}^{(\alpha_n^*)}\geq (1+\rho_k)b_{n-k}^{(\alpha_n^*)}+\frac{1}{2}I_{n-k}^{(\alpha_n^*)}.$$
It follows from $b_{n-k}^{(\alpha_n^*)}>0$ and $I_{n-k}^{(\alpha_n^*)}>0$ that $c_{n-k-1,n}^{(\alpha_n^*)}-c_{n-k,n}^{(\alpha_n^*)}>0$, i.e. (\ref{eq:ckn_prop}) holds.
Now we give the proof for the case III).
 Directly calculating the integration in the definition of $a_0^{(\alpha_n^*)}$ and using (\ref{eq:a0_a1}), we have
  \begin{eqnarray} 
  \frac{1-2\theta_n}{1-\theta_n}a_0^{(\alpha_n^*)}-a_1^{(\alpha_n^*)}&=&a_0^{(\alpha_n^*)}-a_1^{(\alpha_n^*)}-\frac{\theta_n}{1-\theta_n}a_0^{(\alpha_n^*)} \nonumber=\frac{(\alpha_n^*-2\theta_n)(t_{n-\theta_n}-t_{n-1})^{-\alpha_n^*}}
  {(1-\alpha_n^*)\Gamma(1-\alpha_n^*)} +J_1^{(\alpha_n^*)}.\label{eq:a0_a1_1} 
\end{eqnarray}
For the case $n\geq 3$, Using (\ref{eq:J1_b2}), (\ref{eq:a0_a1_1}) and the condition $0<\theta_n<2\theta_n=\alpha_n\leq\alpha_n^*<1$, we can obtain
\begin{eqnarray*}\frac{1-2\theta_n}{1-\theta_n}c_{0,n}^{(\alpha_n^*)}-c_{1,n}^{(\alpha_n^*)}=
\frac{1-2\theta_n}{1-\theta_n}a_0^{(\alpha_n^*)}-a_1^{(\alpha_n^*)}+\left(\frac{1-2\theta_n}{1-\theta_n}\rho_{n-1}+1\right)b_1^{(\alpha_n^*)}
-\rho_{n-2}b_2^{(\alpha_n^*)}
\geq J_1^{(\alpha_n^*)}-\rho_{n-2}b_2^{(\alpha_n^*)}\geq 0.
\end{eqnarray*}
Similarly, we can obtain
$$\frac{1-2\theta_n}{1-\theta_n}c_{0,n}^{(\alpha_n^*)}-c_{1,n}^{(\alpha_n^*)}\geq J_1^{(\alpha_n^*)}+\left(\frac{1-2\theta_n}{1-\theta_n}\rho_{n-1}+1\right)b_1^{(\alpha_n^*)}> 0,$$
 for the case $n=2$.
These complete the proof.

\end{proof}

\begin{lemma}\label{thm:dtaualpha}
Let the parameter $\theta_n=\alpha_n/2, \alpha_n\in [0,1)$, and $\alpha_n^*=\alpha(t_{n-\theta_n})$ such that $\alpha_n^*\geq\alpha_n$.  Let the sequence $\{v^k\}_{k=0}^N$ belongs to $L_2(\Omega)$ and $v^{n-\theta_n}:=\theta_nv^{n-1}+(1-\theta_n)v^n$.   Then it holds that
\begin{equation}\label{eq:ckn-v}
\left((D_{\tau}^{\alpha_n^*}v)^{n-\theta_n}, v^{n-\theta_n}\right)\geq \frac{1}{2}\sum_{k=1}^{n}c_{n-k,n}^{(\alpha_n^*)}\left(\|v^k\|^2-\|v^{k-1}\|^2\right),\quad 2\leq n\leq N.\end{equation}
\end{lemma}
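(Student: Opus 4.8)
The plan is to carry over Alikhanov's energy estimate to the variable-exponent kernel, reducing the claim to the coefficient inequality already supplied by Lemma~\ref{thm:ckn_pro}. Abbreviate $c_j:=c_{j,n}^{(\alpha_n^*)}$ and $g^k:=\triangledown_{\tau}v^k=v^k-v^{k-1}$; it suffices to prove $\mathcal J\ge0$, where
\[
\mathcal J:=2\bigl(D_{\tau}^{\alpha_n^*}v^{n-\theta_n},v^{n-\theta_n}\bigr)-\sum_{k=1}^{n}c_{n-k}\bigl(\|v^k\|^2-\|v^{k-1}\|^2\bigr).
\]
Since $\|v^k\|^2-\|v^{k-1}\|^2=(g^k,v^k+v^{k-1})$, one has $\mathcal J=2\sum_{k=1}^{n}c_{n-k}\bigl(g^k,\,v^{n-\theta_n}-\tfrac12(v^k+v^{k-1})\bigr)$. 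Using $v^{n-\theta_n}=v^{n-1}+(1-\theta_n)g^n$ and the telescoping $v^{n-1}-v^{k-1}=\sum_{j=k}^{n-1}g^j$, I would then record that
\[
v^{n-\theta_n}-\tfrac12(v^k+v^{k-1})=\tfrac12 g^k+\sum_{j=k+1}^{n-1}g^j+(1-\theta_n)g^n\qquad(1\le k\le n-1),
\]
while for $k=n$ the bracket is $(\tfrac12-\theta_n)g^n$. Substituting and collecting terms yields
\[
\mathcal J=\Sigma+2(1-\theta_n)\sum_{k=1}^{n-1}c_{n-k}(g^k,g^n)+(1-2\theta_n)c_0\|g^n\|^2,\qquad
\Sigma:=\sum_{k=1}^{n-1}c_{n-k}\|g^k\|^2+2\!\!\sum_{1\le k<j\le n-1}\!\!c_{n-k}(g^k,g^j).
\]

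The second step converts $\Sigma$ and the mixed term $\sum_{k<n}c_{n-k}(g^k,g^n)$ into sums of nonnegative weighted squares. This is where the monotonicity $c_0\ge c_1\ge\cdots\ge c_{n-1}>0$ from part~II of Lemma~\ref{thm:ckn_pro} is used; it applies because the hypothesis $\alpha_n^*\ge\alpha_n$ forces $\alpha_n^*\ge\alpha_n/2=\theta_n$. Setting $e_i:=c_{n-i-1}-c_{n-i}\ge0$, so that $c_{n-k}=c_{n-1}+\sum_{i=1}^{k-1}e_i$ and $c_{n-1}+\sum_{i=1}^{n-2}e_i=c_1$, an interchange of summations together with the elementary identity $\sum_{k}\|a_k\|^2+2\sum_{k<j}(a_k,a_j)=\bigl\|\sum_k a_k\bigr\|^2$ gives
\[
\Sigma=c_{n-1}\|v^{n-1}-v^0\|^2+\sum_{i=1}^{n-2}e_i\|v^{n-1}-v^i\|^2,\qquad
\sum_{k=1}^{n-1}c_{n-k}g^k=c_{n-1}(v^{n-1}-v^0)+\sum_{i=1}^{n-2}e_i(v^{n-1}-v^i).
\]
Writing $w_0:=v^{n-1}-v^0$ and $w_i:=v^{n-1}-v^i$ and completing the square in each weighted block, this reorganizes as
\[
\mathcal J=c_{n-1}\bigl\|w_0+(1-\theta_n)g^n\bigr\|^2+\sum_{i=1}^{n-2}e_i\bigl\|w_i+(1-\theta_n)g^n\bigr\|^2+\bigl[(1-2\theta_n)c_0-(1-\theta_n)^2c_1\bigr]\|g^n\|^2.
\]

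Finally, part~III of Lemma~\ref{thm:ckn_pro}, again available under $\alpha_n^*\ge\alpha_n$, gives $\tfrac{1-2\theta_n}{1-\theta_n}c_0>c_1$, hence $(1-2\theta_n)c_0>(1-\theta_n)c_1\ge(1-\theta_n)^2c_1$ because $0<1-\theta_n\le1$ and $c_1>0$; so the coefficient of $\|g^n\|^2$ in the last display is positive. Since $c_{n-1}>0$ and $e_i\ge0$, every term is nonnegative, whence $\mathcal J\ge0$, which is precisely \eqref{eq:ckn-v}. I expect the main obstacle to be purely organizational: keeping track of the double sum so that it collapses into weighted squares (which is exactly where the monotonicity of $\{c_{n-k}\}$ enters), and recognizing that the leftover $\|g^n\|^2$-coefficient is the very quantity bounded below in part~III of Lemma~\ref{thm:ckn_pro}. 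The restriction $2\le n\le N$ is what makes both the $w_i$-decomposition and part~III of that lemma applicable.
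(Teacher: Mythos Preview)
Your argument is correct and is essentially the same as the paper's: both proofs expand $2(D_{\tau}^{\alpha_n^*}v^{n-\theta_n},v^{n-\theta_n})-\sum_k c_{n-k}(\|v^k\|^2-\|v^{k-1}\|^2)$, use the monotonicity in Lemma~\ref{thm:ckn_pro}\,II to regroup the off-diagonal terms into nonnegative weighted squares, and appeal to Lemma~\ref{thm:ckn_pro}\,III for the residual $\|v^n-v^{n-1}\|^2$ coefficient. Your completed-square form $c_{n-1}\|v^{n-\theta_n}-v^0\|^2+\sum_i e_i\|v^{n-\theta_n}-v^i\|^2+[(1-2\theta_n)c_0-(1-\theta_n)^2c_1]\|g^n\|^2$ is exactly what the paper's identity becomes after using $\theta_n\|v^k-v^{n-1}\|^2+(1-\theta_n)\|v^k-v^n\|^2=\|v^k-v^{n-\theta_n}\|^2+\theta_n(1-\theta_n)\|g^n\|^2$; the paper simply discards the extra $\theta_n(1-\theta_n)c_1\|g^n\|^2$ and applies part~III directly, while you keep it and then use $1-\theta_n\le1$.
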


\begin{proof} The definition (\ref{eq:Caputo_v_disc}) of $(D_{\tau}^{\alpha_n^*}v)^{n-\theta_n}$ and the monotonicity (\ref{eq:ckn_prop}) of the coefficients $c_{n-k,n}^{(\alpha_n^*)}$ imply that
\begin{equation*}\begin{aligned}
2\left((D_{\tau}^{\alpha_n^*}v)^{n-\theta_n}, v^{n-\theta_n}\right)&=2\sum_{k=1}^nc_{n-k,n}^{(\alpha_n^*)}\left(v^k-v^{k-1}, v^{n-\theta_n}\right)\\
&=\sum_{k=1}^nc_{n-k,n}^{(\alpha_n^*)}\left(\|v^k\|^2-\|v^{k-1}\|^2\right)+c_{n-1,n}^{(\alpha_n^*)}\left(\theta_n\|v^0-v^{n-1}\|^2+(1-\theta_n)\|v^0-v^n\|^2\right)\\
&+\sum_{k=1}^{n-2}\left(c_{n-k-1,n}^{(\alpha_n^*)}-c_{n-k,n}^{(\alpha_n^*)}\right)
\left(\theta_n\|v^{k}-v^{n-1}\|^2+(1-\theta_n)\|v^k-v^n\|^2\right) \\
&
+\left((1-2\theta_n)c_{0,n}^{(\alpha_n^*)}-(1-\theta_n)c_{1,n}^{(\alpha_n^*)}\right)
\|v^n-v^{n-1}\|^2\\
&\geq\sum_{k=1}^nc_{n-k,n}^{(\alpha_n^*)}\left(\|v^k\|^2-\|v^{k-1}\|^2\right)+\left((1-2\theta_n)c_{0,n}^{(\alpha_n^*)}-(1-\theta_n)c_{1,n}^{(\alpha_n^*)}\right)
\|v^n-v^{n-1}\|^2.
\end{aligned}
\end{equation*}
It follows from Lemma \ref{thm:ckn_pro} that the inequality (\ref{eq:ckn-v}) holds and it completes proof.
\end{proof}

{\bf Remark 2.} Lemma \ref{thm:dtaualpha} is very important to prove the stability and the optimal convergent order from Theorem \ref{thm:stable} and Theorem \ref{thm:full_error}  and its sufficient condition is $\alpha_n^*\geq\alpha_n$ which means that $\alpha_n$ satisfies the condition $\alpha(t_{n-\alpha_n/2})\geq\alpha_n$. From Lemma \ref{lem:CaputoEst1}, the condition $\alpha_n\in [\min_{t\in[t_{n-1},t_n]}$, $\min_{t\in[t_{n-1},t_n]}]$ is crucial  to obtain at least $(3-\alpha_n^*)$-order accuracy.
The combination of two conditions is the condition (\ref{cond}).

The following Lemma \ref{lem:discrete_gronwall} and Lemma \ref{lem:Pij} from \cite{HUANG202343} are very important for the proof of the stability and error analysis.

\begin{lemma}\label{lem:discrete_gronwall}Suppose that the nonnegative sequences $\{\xi^k\}_{k=1}^n$ and $\{\eta^k\}_{k=1}^n$ are bounded and the grid function $\{\zeta^n\}_{n=0}^N$ satisfies
$$(D_{\tau}^{\alpha_n^*}\zeta^2)^{n-\theta_n}:=\sum_{k=1}^{n} c_{n-k,n}^{(\alpha_n^*)}\triangledown_{\tau}(\zeta^k)^2\leq \xi^n\zeta^{n-\theta_n}+(\eta^n)^2, \ \text{for}\ \ n\geq 1.$$
Then we have
$$\zeta^n\leq \zeta^0+\max_{1\leq k\leq n}\sum_{j=1}^k\mathbb{P}_{k-j}^{(k)}(\xi^j+\eta^j)+\max_{1\leq j\leq n}\eta^j,\ \ \text{for}\ \ 1\leq n.$$
\end{lemma}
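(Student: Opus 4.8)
The plan is to mimic the standard argument for discrete fractional Grönwall inequalities (as in Liao--McLean--Zhang and the referenced Lemma 4.4--4.5 of \cite{HUANG202343}), adapted to the variable-exponent kernels $c_{n-k,n}^{(\alpha_n^*)}$ and their complementary kernels $\mathbb{P}_j^{(n)}$. First I would recall the key structural facts established earlier: the kernels $c_{n-k,n}^{(\alpha_n^*)}$ are positive and, under the hypothesis $\alpha_n^*\geq\alpha_n$, monotone (Lemma \ref{thm:ckn_pro} II), while the complementary kernels $\mathbb{P}_j^{(n)}$ are nonnegative and satisfy the defining identity $\sum_{k=m}^n\mathbb{P}_{n-k}^{(n)}c_{k-m,k}^{(\alpha_k^*)}=1$. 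The bound on $\mathbb{P}_j^{(n)}$ (imitating Corollary 4.1 of \cite{HUANG202343}) gives $\sum_{j=1}^n\mathbb{P}_{n-j}^{(n)}\leq$ a quantity controlled by $t_n^{\alpha_n^*}\Gamma(1+\alpha_n^*)$, hence uniformly bounded; this is what makes the final estimate $\alpha$-robust.

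The main steps would be: (1) Start from the hypothesis $D_\tau^{\alpha_n^*}v^2(t_{n-\theta_n})\leq \xi^n v^{n-\theta_n}+(\eta^n)^2$. Using the convexity/averaging definition $v^{n-\theta_n}=\theta_n v^{n-1}+(1-\theta_n)v^n$ together with Lemma \ref{thm:dtaualpha} applied to $w^k:=\|v^k\|$ (or rather its discrete analogue), convert the left side into $\sum_{k=1}^n c_{n-k,n}^{(\alpha_n^*)}\big((v^k)^2-(v^{k-1})^2\big)$, so that $\sum_{k=1}^n c_{n-k,n}^{(\alpha_n^*)}\big((v^k)^2-(v^{k-1})^2\big)\leq \xi^n v^{n-\theta_n}+(\eta^n)^2$. (2) Introduce the maximal value $v^{*,n}:=\max_{0\leq k\leq n}v^k$ to linearize the quadratic: bound $v^{n-\theta_n}\leq v^{*,n}$ and $(v^n)^2-(v^{n-1})^2$-type telescoping so that after multiplying by $\mathbb{P}_{\cdot}^{(\cdot)}$ and summing over the time index one gets $(v^n)^2\leq (v^0)^2+v^{*,n}\sum_{j=1}^n\mathbb{P}_{n-j}^{(n)}\xi^j+\sum_{j=1}^n\mathbb{P}_{n-j}^{(n)}(\eta^j)^2$. (3) Take the maximum over $n$, use $v^{*,n}\leq v^0+$(the right-hand quantities), and invoke the elementary inequality: if $x^2\leq a x + b$ with $a,b\geq0$ then $x\leq a+\sqrt{b}$, to split off $v^{*,n}$ and close the recursion, arriving at $v^n\leq v^{*,n}\leq v^0+\max_{1\leq k\leq n}\sum_{j=1}^k\mathbb{P}_{k-j}^{(k)}\xi^j+\max_{1\leq j\leq n}\eta^j$ after absorbing $\sum\mathbb{P}(\eta^j)^2\leq (\max_j\eta^j)^2\sum\mathbb{P}$ and using the uniform bound on $\sum_j\mathbb{P}_{n-j}^{(n)}$ — actually one keeps $\sum_{j=1}^k\mathbb{P}_{k-j}^{(k)}\xi^j$ in the statement rather than bounding $\xi$, which is exactly the form quoted.

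The delicate point — and the main obstacle — is step (2)--(3): the summation-by-parts / convolution-inversion argument that lets one replace $\sum_k c_{n-k,n}^{(\alpha_n^*)}(\text{increments})$ by the $\mathbb{P}$-weighted sum requires that the kernels at \emph{different} outer time levels $n$ interlock correctly through the recursion defining $\mathbb{P}_j^{(n)}$, and here the exponent $\alpha_n^*$ genuinely depends on $n$, so one cannot simply cite the constant-order manipulation verbatim. One must verify that the monotonicity $c_{0,n}^{(\alpha_n^*)}\geq c_{1,n}^{(\alpha_n^*)}\geq\cdots$ (Lemma \ref{thm:ckn_pro}) and the sign condition (\ref{eq:c0n_prop}) are exactly what is needed to carry out the telescoping with the averaged quantity $v^{n-\theta_n}$, and that the nonnegativity of $\mathbb{P}_j^{(n)}$ survives the variable-exponent recursion — both of which are the content of the ``imitating Lemma 4.4, Lemma 4.5, Corollary 4.1 in \cite{HUANG202343}'' remark. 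Given those inputs, the remaining algebra is routine; accordingly I would present steps (1) and (3) in detail and refer to \cite{HUANG202343,LIAO2019218} for the convolution-inversion bookkeeping in step (2).
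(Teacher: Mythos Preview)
The paper does not actually supply a proof of this lemma: it merely states the result after the sentence ``Imitating Lemma 4.4, Lemma 4.5, Corollary 4.1 in \cite{HUANG202343}, we can obtain the following\ldots'', so your sketch---complementary-kernel convolution inversion followed by the quadratic-to-linear trick $x^2\le ax+b\Rightarrow x\le a+\sqrt{b}$---is exactly the argument being cited and there is nothing further to compare. One small clean-up: in your step~(1) you do not need Lemma~\ref{thm:dtaualpha}, because the hypothesis $D_\tau^{\alpha_n^*}v^2(t_{n-\theta_n})$ already \emph{is} $\sum_{k=1}^n c_{n-k,n}^{(\alpha_n^*)}\bigl((v^k)^2-(v^{k-1})^2\bigr)$ by definition~(\ref{eq:Caputo_v_disc}); Lemma~\ref{thm:dtaualpha} is what the paper uses upstream (in Theorems~\ref{thm:stable} and~\ref{thm:full_error}) to manufacture that hypothesis, not inside the Gr\"onwall argument itself.
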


\begin{lemma}\label{lem:Pij} Let $\gamma\in (0,1)$, one has
$$\sum_{j=1}^n\mathbb{P}_{n-j}^{(n)}t_j^{\gamma-\alpha_j^*}\leq \frac{(1+2^r)t_n^{\gamma}
\max_{1\leq j\leq n}\Gamma(1+\gamma-\alpha_j^*)}{\Gamma(1+\gamma)},\ \ \text{for}\ \ 1\leq n.$$
\end{lemma}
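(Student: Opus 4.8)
The plan is to prove the bound on the weighted sum $\sum_{j=1}^n\mathbb{P}_{n-j}^{(n)}t_j^{\gamma-\alpha_j^*}$ by the standard "verify the ansatz via the defining identity of the complementary kernels" technique, exactly in the spirit of Corollary 4.1 in \cite{HUANG202343} and Lemma 2.1 in \cite{LIAO2019218}. Recall that the kernels $\mathbb{P}_{n-k}^{(n)}$ are defined precisely so that $\sum_{k=m}^n\mathbb{P}_{n-k}^{(n)}c_{k-m,k}^{(\alpha_k^*)}=1$. The key auxiliary fact I need is a \emph{lower bound} of convolution type: for a suitable candidate sequence $w_j$ (a discretization of $t^{\gamma}$, scaled by the right Gamma factor), one should have $\sum_{k=j}^n c_{n-k,n}^{(\alpha_n^*)}(w_k-w_{k-1})\gtrsim$ (something comparable to $t_n^{\gamma-\alpha_n^*}$ up to constants). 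Combined with the positivity and monotonicity of the $c_{n-k,n}^{(\alpha_n^*)}$ from Lemma \ref{thm:ckn_pro}, this lets one run the usual induction/comparison argument.

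\textbf{Concretely, I would proceed as follows.} First, set $G_n:=\sum_{j=1}^n\mathbb{P}_{n-j}^{(n)}t_j^{\gamma-\alpha_j^*}$ and introduce the continuous comparison function. The heuristic is that $\mathbb{P}$ plays the role of a discrete fractional integration operator $\,{}_0I_t^{\alpha}$ with variable exponent, and $\,{}_0I_t^{\alpha}t^{\gamma-\alpha}=\frac{\Gamma(1+\gamma-\alpha)}{\Gamma(1+\gamma)}t^{\gamma}$; the factor $(1+2^r)$ and the $\max_j$ over the Gamma functions absorb the variable-exponent and graded-mesh discrepancies. The core step is to show
\[
\sum_{k=j}^{n} c_{n-k,n}^{(\alpha_n^*)}\bigl(t_k^{\gamma}-t_{k-1}^{\gamma}\bigr)\ \geq\ \frac{\Gamma(1+\gamma)}{(1+2^r)\max_{1\le j\le n}\Gamma(1+\gamma-\alpha_j^*)}\, t_j^{\gamma-\alpha_j^*},
\]
or a close variant thereof. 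For this I would use the representation of $c_{n-k,n}^{(\alpha_n^*)}$ via the integrals in \eqref{eq:ak_def}–\eqref{eq:bk_def} together with the lower bound \eqref{eq:cnk_n1}, i.e. $c_{n-k,n}^{(\alpha_n^*)}\ge \frac{1}{(1+3^r)\tau_k}\int_{t_{k-1}}^{t_k}(t_n-s)^{-\alpha_n^*}/\Gamma(1-\alpha_n^*)\,ds$, and the elementary estimate $t_k^{\gamma}-t_{k-1}^{\gamma}\ge \gamma\, t_{k-1}^{\gamma-1}\tau_k$ (convexity/concavity of $s\mapsto s^\gamma$ according to the sign of $\gamma-1$). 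Then the left-hand sum is bounded below by a Riemann sum for $\int_0^{t_j}(t_n-s)^{-\alpha_n^*}\gamma s^{\gamma-1}\,ds$ up to the graded-mesh constant, and a Beta-function computation of that integral yields the $t_j^{\gamma-\alpha_j^*}$ behavior with the stated constants.

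\textbf{Finally}, with this convolution lower bound in hand, I would combine it with the defining identity $\sum_{k=m}^n\mathbb{P}_{n-k}^{(n)}c_{k-m,k}^{(\alpha_k^*)}=1$ and the nonnegativity of all $\mathbb{P}_{n-k}^{(n)}$ (which follows from the recursion together with the monotonicity \eqref{eq:ckn_prop} of the coefficients) to get, by the standard swap-of-summation trick,
\[
G_n\ \le\ C\sum_{j=1}^n\mathbb{P}_{n-j}^{(n)}\sum_{k=j}^{j}\cdots \quad\rightsquigarrow\quad G_n\le C\, t_n^{\gamma},
\]
namely one telescopes $\sum_j \mathbb{P}_{n-j}^{(n)}\bigl(\text{lower bound}\bigr)\le \sum_j\mathbb{P}_{n-j}^{(n)}\sum_{k\ge j}c_{n-k,n}^{(\alpha_n^*)}(t_k^\gamma-t_{k-1}^\gamma)=\sum_{k=1}^n(t_k^\gamma-t_{k-1}^\gamma)\sum_{j\le k}\mathbb{P}_{n-j}^{(n)}c_{\dots}$, and the inner sum is $\le 1$ (or $=1$) by the defining identity, so the whole thing collapses to $t_n^\gamma-t_0^\gamma=t_n^\gamma$. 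Tracking the constants through the Beta integral and the two graded-mesh factors $(1+2^r)$, $(1+3^r)$ gives the claimed bound. \textbf{The main obstacle} I anticipate is the bookkeeping in the convolution lower bound: one has to be careful that the estimate for $c_{n-k,n}^{(\alpha_n^*)}$ uses the argument $t_n-s$ rather than $t_{n-\theta_n}-s$ (hence the loss absorbed into constants), that the variable exponent $\alpha_j^*$ depends on $j$ so the $\Gamma(1+\gamma-\alpha_j^*)$ must be pulled out as a maximum, and that the endpoints $k=n$ and $k=1$ (where $c_{n-k,n}^{(\alpha_n^*)}$ has a slightly different form per \eqref{eq:ckn_def}) are handled separately — but none of these is essentially hard, just delicate. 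I would explicitly note that the argument mirrors Corollary 4.1 of \cite{HUANG202343} and only the constants differ because of the relaxed selection of $\alpha_n$.
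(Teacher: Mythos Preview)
Your overall strategy is the standard one (and indeed the paper gives no proof, deferring entirely to \cite{HUANG202343}): use the defining identity of the complementary kernels, bound $t_j^{\gamma-\alpha_j^*}$ above by a discrete convolution, swap the order of summation, and telescope. However, the specific convolution inequality you write down has the indices at the wrong level, and as stated the argument does not close.

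You propose to show
\[
\sum_{k=j}^{n} c_{n-k,n}^{(\alpha_n^*)}\bigl(t_k^{\gamma}-t_{k-1}^{\gamma}\bigr)\ \geq\ \text{const}\cdot t_j^{\gamma-\alpha_j^*},
\]
i.e.\ a tail sum at level $n$. But after swapping, the inner sum becomes $\sum_{j\le k}\mathbb{P}_{n-j}^{(n)}\,c_{n-k,n}^{(\alpha_n^*)}=c_{n-k,n}^{(\alpha_n^*)}\sum_{j\le k}\mathbb{P}_{n-j}^{(n)}$, which is \emph{not} the defining identity $\sum_{k=m}^n\mathbb{P}_{n-k}^{(n)}c_{k-m,k}^{(\alpha_k^*)}=1$ (the second subscript of $c$ must vary with the summation index). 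Moreover, your inequality is actually false for small $j$ when $\gamma<\alpha_j^*$: the left side stays bounded ($\approx t_n^{\gamma-\alpha_n^*}$) while $t_j^{\gamma-\alpha_j^*}\to\infty$ as $t_j\to 0$.

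The correct bound is at level $j$, namely
\[
\sum_{m=1}^{j} c_{j-m,j}^{(\alpha_j^*)}\bigl(t_m^{\gamma}-t_{m-1}^{\gamma}\bigr)\ \geq\ \frac{\Gamma(1+\gamma)}{(1+2^r)\,\Gamma(1+\gamma-\alpha_j^*)}\, t_j^{\gamma-\alpha_j^*}.
\]
This follows from the lower bound \eqref{eq:cnk_n1} together with Chebyshev's integral inequality (since $s\mapsto \gamma s^{\gamma-1}$ is decreasing and $s\mapsto (t_j-s)^{-\alpha_j^*}$ is increasing on each $[t_{m-1},t_m]$), which turns the sum into a Riemann sum for $\int_0^{t_j}\gamma s^{\gamma-1}(t_j-s)^{-\alpha_j^*}\,ds$; the Beta integral then gives exactly the stated constant. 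With this corrected inequality the swap yields $\sum_{m=1}^n(t_m^\gamma-t_{m-1}^\gamma)\sum_{j=m}^n\mathbb{P}_{n-j}^{(n)}c_{j-m,j}^{(\alpha_j^*)}=t_n^\gamma$, and taking the maximum of $\Gamma(1+\gamma-\alpha_j^*)$ over $j$ gives the claim. So your plan is salvageable, but the ``core step'' needs to be rewritten at level $j$, not level $n$.
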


\begin{corollary}Setting $l_N=1/\ln N$, one has
\begin{equation}\label{eq:Pjn_tj}
\sum_{j=1}^n\mathbb{P}_{n-j}^{(n)}t_j^{-\alpha_j^*}\leq \frac{(1+2^r)e^r
\max_{1\leq j\leq n}\Gamma(1+l_N-\alpha_j^*)}{\Gamma(1+l_N)},\ \ \text{for}\ \ 1\leq n,
\end{equation}
\begin{equation}\label{eq:Pjn_t0}
\sum_{j=1}^n\mathbb{P}_{n-j}^{(n)}\leq \frac{(1+2^r)e^rt_n^{\alpha^*}
\max_{1\leq j\leq n}\Gamma(1+l_N-\alpha_j^*)}{\Gamma(1+l_N)},\ \ \text{for}\ \ 1\leq n.
\end{equation}
\end{corollary}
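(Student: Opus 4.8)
The plan is to read this corollary off the preceding lemma by the single substitution $\gamma=l_N:=1/\ln N$. First note that $l_N\in(0,1)$ as soon as $N\ge 3$ (the cases $N\le 2$ being of no interest, or handled by inspection), so the preceding lemma applies verbatim with this choice of $\gamma$; moreover $1+l_N-\alpha_j^*>l_N>0$, so every Gamma value appearing is finite and positive. The one arithmetic identity that drives everything is
\[
N^{\,r l_N}=N^{\,r/\ln N}=e^{r},\qquad\text{and more generally}\qquad n^{\,r l_N}=e^{\,r\ln n/\ln N}\le e^{r}\ \ (1\le n\le N),
\]
which is exactly the mechanism that converts an otherwise dangerous power of $N$ into a fixed constant.

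For the first estimate I would factor $t_j^{-\alpha_j^*}=t_j^{-l_N}\,t_j^{\,l_N-\alpha_j^*}$. Since the graded mesh satisfies $t_1\le t_j\le t_n$ and $-l_N<0$, one has $t_j^{-l_N}\le t_1^{-l_N}$, so this constant may be pulled out of the sum and the preceding lemma applied to $\sum_{j}\mathbb{P}_{n-j}^{(n)}t_j^{\,l_N-\alpha_j^*}$:
\[
\sum_{j=1}^n\mathbb{P}_{n-j}^{(n)}t_j^{-\alpha_j^*}\le t_1^{-l_N}\sum_{j=1}^n\mathbb{P}_{n-j}^{(n)}t_j^{\,l_N-\alpha_j^*}\le \frac{(1+2^r)\,t_1^{-l_N}t_n^{\,l_N}\,\max_{1\le j\le n}\Gamma(1+l_N-\alpha_j^*)}{\Gamma(1+l_N)} .
\]
It remains to observe that $t_1^{-l_N}t_n^{\,l_N}=(t_n/t_1)^{l_N}=(n^r)^{l_N}=n^{\,r l_N}\le e^{r}$ (the factors of $T$ cancelling), which yields the first inequality. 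For the second, I would multiply the first bound by $t_n^{\alpha^*}$; it suffices to check $t_j^{\alpha_j^*}\le t_n^{\alpha^*}$ for $1\le j\le n$, which follows from $0<t_j\le t_n$ together with $0\le\alpha_j^*\le\alpha^*<1$ (raising both the base up to $t_n$ and the exponent up to $\alpha^*$). Hence
\[
\sum_{j=1}^n\mathbb{P}_{n-j}^{(n)}=\sum_{j=1}^n\mathbb{P}_{n-j}^{(n)}\,t_j^{\alpha_j^*}\,t_j^{-\alpha_j^*}\le t_n^{\alpha^*}\sum_{j=1}^n\mathbb{P}_{n-j}^{(n)}t_j^{-\alpha_j^*},
\]
and the first part closes the argument.

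There is essentially no deep obstacle here, since the corollary is a specialization of the preceding lemma; the only point that genuinely needs care is the calibration of $\gamma$. The preceding lemma cannot be invoked at $\gamma=0$ directly (its proof requires $\gamma>0$), and the price paid for replacing $t_j^{-\alpha_j^*}$ by $t_j^{\,\gamma-\alpha_j^*}$ is precisely the factor $t_1^{-\gamma}=T^{-\gamma}N^{\,r\gamma}$, which grows polynomially in $N$ unless $\gamma$ is as small as $O(1/\ln N)$; the value $\gamma=1/\ln N$ is the natural threshold at which $N^{\,r\gamma}=e^{r}$ stays bounded. Everything after that choice is routine monotonicity bookkeeping with the powers of $t_j$ and $t_n$ on the graded mesh.
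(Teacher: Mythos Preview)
The paper states this corollary without proof, merely citing \cite{HUANG202343}, so there is nothing to compare against; your approach via $\gamma=l_N$ in the preceding lemma is the intended one, and your proof of the first inequality is correct and clean. The identity $(t_n/t_1)^{l_N}=n^{rl_N}\le e^r$ is exactly the point.

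There is, however, a genuine gap in your argument for the second inequality. You claim $t_j^{\alpha_j^*}\le t_n^{\alpha^*}$ follows from $t_j\le t_n$ and $\alpha_j^*\le\alpha^*$, but this is false whenever $t_n<1$: raising the base gives $t_j^{\alpha_j^*}\le t_n^{\alpha_j^*}$, yet for $t_n<1$ increasing the exponent from $\alpha_j^*$ to $\alpha^*$ \emph{decreases} the value, so $t_n^{\alpha_j^*}\ge t_n^{\alpha^*}$. Concretely, take $T=1$, $n<N$, and some $j$ with $\alpha_j^*$ near $0$; then $t_j^{\alpha_j^*}$ is close to $1$ while $t_n^{\alpha^*}<1$. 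The bound you actually get from your decomposition is $t_j^{\alpha_j^*}\le\max\{1,\,t_n^{\alpha^*}\}$ (split into $t_j\le 1$ and $t_j>1$), which would yield the second estimate with $t_n^{\alpha^*}$ replaced by $\max\{1,t_n^{\alpha^*}\}$. Either the corollary carries an implicit normalization $T\ge 1$ (so that eventually $t_n\ge 1$ is not needed for a uniform-in-$n$ bound only at $n=N$), or the stated factor $t_n^{\alpha^*}$ should be read as $\max\{1,t_n^{\alpha^*}\}$; in any case your monotonicity step as written does not hold on the whole graded mesh.
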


\section{fully discretization}\label{sec4}

We use  finite element method to approximate the equation (\ref{eq:nn_time_dis}).
Define a quasi-uniform partition of $\Omega$ with mesh diameter $h$ and let $S_h^p(\Omega)$ be the space of continuous and piecewise polynomials with degree at most $p$ based on this partition of  $\Omega$. Let  $\Pi_h : H_0^1(\Omega)\rightarrow S_h^p(\Omega)$ be the elliptic projection operator defined by
\begin{equation}\label{eq:Pi_h}
(\mathbf{K}\nabla (v-\Pi_hv),\nabla w_h)=0, \forall w_h\in S_h,\ \text{for}\ v\in H_0^1(\Omega).
\end{equation}
Then, according to the classical finite element theory \cite{THOMEE2006}, we know that  the approximation property of $\Pi_hv$ is the following for $s>1$
\begin{equation}\label{eq:Pi_error}
\|v-\Pi_hv\|_{H^k(\Omega)}\leq C_Ih^{\mu-k}\|v\|_{H^s(\Omega)},\quad v\in H_0^1(\Omega)\cap H^s(\Omega), \ k=0,1.
\end{equation}
where $\mu=\min\{s,p+1\}$.
Furthermore, the following Poincar\'{e}-Friedrichs inequality will be used frequently
\begin{equation}\label{eq:Poincare_inequality}
\|v\|\leq C_P \|\nabla v\|,\quad \forall v\in H_0^1(\Omega).
\end{equation}

Taking $t=t_{n-\theta_n}$ in the equation (\ref{eq:tFDE}), then we have
\begin{equation}\label{eq:tn_alphan}
\ ^C_0D_t^{\alpha(t_{n-\theta_n})}u(\mathbf{x},t_{n-\theta_n})
+\mathscr{L}u(\mathbf{x},t_{n-\theta_n})=f(\mathbf{x},t_{n-\theta_n})
\end{equation}
 Using $L2-1_\sigma$ formula to approximate temporal terms, then we can rewrite  (\ref{eq:tn_alphan}) as
\begin{eqnarray}
(D_{\tau}^{\alpha_n^*}u)^{n-\theta_n}+\mathscr{L}u^{n-\theta_n}=
f(\mathbf{x},t_{n-\theta_n})+E_n+R_n,\quad 1\leq n\leq N,\label{eq:nn_time_dis}
\end{eqnarray}
where $E_n$ and  $R_n$ are defined by
$$E_n:=\mathscr{L}u^{n-\theta_n}-\mathscr{L}u(\mathbf{x},t_{n-\theta_n}),$$
and
$$R_n:=(D_{\tau}^{\alpha_n^*}u)^{n-\theta_n}-\ ^C_0D_t^{\alpha_n^*}u(\mathbf{x},t_{n-\theta_n}).$$

Multiplying  the equations  (\ref{eq:nn_time_dis}) by any $w\in H^1_0(\Omega)$, integrating
on $\Omega$, and applying integration by parts, we have the weak variational problem: find $u^n\in H^1_0(\Omega)$ such that for any $ w\in H^1_0(\Omega)$ and $1\leq n$
\begin{equation}
\left((D_{\tau}^{\alpha_n^*}u)^{n-\theta_n},w\right)+(\mathbf{K}\nabla u^{n-\theta_n},\nabla w)=
(f(\mathbf{x},t_{n-\theta_n}),w)+(E_n+R_n,w).\label{eq:nn_time_space_dis00}
 \end{equation}

 Omitting the truncate error term $E_n+R_n$  on the right-hand side (\ref{eq:nn_time_space_dis00}), we construct a difference scheme for (\ref{eq:tFDE})-(\ref{eq:tFDE1}) as follows:
find $U^n\in H^1_0(\Omega)$ such that for any $w\in H^1_0(\Omega)$
\begin{equation}\label{eq:nn_time_space_dis0}\begin{array}{l}
  \left((D_{\tau}^{\alpha_n^*}U)^{n-\theta_n},w\right)+(\mathbf{K}\nabla U^{n-\theta_n},\nabla w)=
(f(\mathbf{x},t_{n-\theta_n}),w),\quad    1\leq n,\\
  (\mathbf{K}\nabla (u_0-U^0),\nabla w)=0. 
  \end{array}
 \end{equation}

Thus, we obtain the finite element scheme:  find $u^n_h:=u_h(t_n)$ and $u^n_h\in S_h$  such that for any $w_h\in S_h$
\begin{equation}\begin{array}{l}
  ((D_{\tau}^{\alpha_n^*}u_h)^{n-\theta_n},w_h)+(\mathbf{K}\nabla u^{n-\theta_n}_h,\nabla w_h)=
(f(\mathbf{x},t_{n-\theta_n}),w_h)\quad 1\leq n,\\
(\mathbf{K}\nabla (u_0-\Pi_hu_0),\nabla w_h)=0.
\end{array} \label{eq:nn_time_space_dis}\end{equation}

\section{Stability and Error estimate}

In this section, we present the stability of the finite element scheme (\ref{eq:nn_time_space_dis}) with respective to the initial value $u_0$ and the right source term $f$ and the error estimate for approximate solution $u_h^n$.

\begin{theorem} \label{thm:stable} Suppose that Assumption A holds and $f(\mathbf{x},t_n)\in L_2(\Omega)$ for any $1\leq n\leq N$. Let $\theta_n=\alpha_n/2, \alpha_n^*=\alpha(t_{n-\theta_n})$ such that $0\leq\alpha_n\leq\alpha_n^*<1$ for $ 1\leq n$. Let $\{u_h^n: 0\leq n\leq N\}$ be the solution of the discrete problem (\ref{eq:nn_time_space_dis}). Then we have for $1\leq n\leq N$
\begin{equation}\label{eq:stable}\|u_h^n\|\leq \|u_h^0\|+2 (1+2^r)e^rt_n^{\alpha^*}\frac{
\max_{1\leq j\leq n}\Gamma(1+1/\ln N-\alpha_j^*)}{\Gamma(1+1/\ln N)} \max_{1\leq j\leq n}\|f(\mathbf{x},t_{j-\theta_j})\|.\end{equation}
\end{theorem}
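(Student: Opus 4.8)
The plan is to derive a discrete energy inequality by testing the scheme \eqref{eq:nn_time_space_dis} with an appropriate function, and then to apply the discrete fractional Gr\"onwall inequality of Lemma~\ref{lem:discrete_gronwall} together with the kernel bounds \eqref{eq:Pjn_tj}--\eqref{eq:Pjn_t0} from the Corollary. First I would take $w_h=u_h^{n-\theta_n}$ in \eqref{eq:nn_time_space_dis}, so that the equation becomes
\begin{equation*}
(D_{\tau}^{\alpha_n^*}u_h^{n-\theta_n},u_h^{n-\theta_n})+(\mathbf{K}\nabla u_h^{n-\theta_n},\nabla u_h^{n-\theta_n})=(f(\mathbf{x},t_{n-\theta_n}),u_h^{n-\theta_n}).
\end{equation*}
The diffusion term is nonnegative by the coercivity of $\mathbf{K}$ in Assumption~A, so it can be dropped, and the right-hand side is bounded by $\|f(\mathbf{x},t_{n-\theta_n})\|\,\|u_h^{n-\theta_n}\|$ using Cauchy--Schwarz. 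This leaves $(D_{\tau}^{\alpha_n^*}u_h^{n-\theta_n},u_h^{n-\theta_n})\le \|f(\mathbf{x},t_{n-\theta_n})\|\,\|u_h^{n-\theta_n}\|$ for $1\le n\le N$ (the case $n=1$ to be checked separately since Lemma~\ref{thm:dtaualpha} is stated for $n\ge 2$, but the single-term formula $D_\tau^{\alpha_1^*}u_h^{1-\theta_1}=c_{0,1}^{(\alpha_1^*)}\nabla_\tau u_h^1$ makes the $n=1$ inequality elementary).

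Next I would invoke Lemma~\ref{thm:dtaualpha}, whose hypothesis $\alpha_n^*\ge\alpha_n$ is exactly the condition assumed in the theorem, to get
\begin{equation*}
\frac12\sum_{k=1}^n c_{n-k,n}^{(\alpha_n^*)}\bigl(\|u_h^k\|^2-\|u_h^{k-1}\|^2\bigr)\le \|f(\mathbf{x},t_{n-\theta_n})\|\,\|u_h^{n-\theta_n}\|.
\end{equation*}
Writing $\Phi^n:=\|u_h^n\|$ and noting $\|u_h^{n-\theta_n}\|\le\theta_n\Phi^{n-1}+(1-\theta_n)\Phi^n\le\max\{\Phi^{n-1},\Phi^n\}$, the left side is precisely $\tfrac12 D_\tau^{\alpha_n^*}(\Phi^\cdot)^2$ evaluated in the ``$(\Phi^k)^2$'' sense, i.e.\ $\tfrac12\sum_k c_{n-k,n}^{(\alpha_n^*)}\triangledown_\tau (\Phi^k)^2 = \tfrac12 D_\tau^{\alpha_n^*}(\Phi^2)^{n-\theta_n}$ in the notation of Lemma~\ref{lem:discrete_gronwall} (with $v^k$ there replaced by $\Phi^k$). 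Thus the grid function $\Phi^n=\|u_h^n\|$ satisfies $D_\tau^{\alpha_n^*}(\Phi^2)^{n-\theta_n}\le \xi^n \Phi^{n-\theta_n}+(\eta^n)^2$ with $\xi^n=2\|f(\mathbf{x},t_{n-\theta_n})\|$ and $\eta^n=0$. Applying Lemma~\ref{lem:discrete_gronwall} gives
\begin{equation*}
\|u_h^n\|\le \|u_h^0\|+\max_{1\le k\le n}\sum_{j=1}^k \mathbb{P}_{k-j}^{(k)}\,2\|f(\mathbf{x},t_{j-\theta_j})\|.
\end{equation*}

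Finally I would estimate the kernel sum: pulling $\max_{1\le j\le k}\|f(\mathbf{x},t_{j-\theta_j})\|$ out and bounding $\sum_{j=1}^k\mathbb{P}_{k-j}^{(k)}$ by \eqref{eq:Pjn_t0} with $t_k\le t_n$, which yields exactly the factor $(1+2^r)e^r t_n^{\alpha^*}\max_{1\le j\le n}\Gamma(1+1/\ln N-\alpha_j^*)/\Gamma(1+1/\ln N)$, giving \eqref{eq:stable}. The main obstacle I anticipate is the bookkeeping at the ``$v^{n-\theta_n}$'' level: one must be careful that Lemma~\ref{thm:dtaualpha} converts $(D_\tau^{\alpha_n^*}u_h^{n-\theta_n},u_h^{n-\theta_n})$ into a telescoping-type sum in $\|u_h^k\|^2$, and that this sum is genuinely of the form $D_\tau^{\alpha_n^*}$ applied to the squared-norm grid function so that Lemma~\ref{lem:discrete_gronwall} is directly applicable with $\eta^n\equiv 0$; also the $n=1$ step must be handled outside Lemma~\ref{thm:dtaualpha}. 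Everything else is a routine combination of coercivity, Cauchy--Schwarz, and the already-established kernel estimates.
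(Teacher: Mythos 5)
Your proposal is correct and follows essentially the same route as the paper's proof: test with $u_h^{n-\theta_n}$, drop the coercive diffusion term, apply Cauchy--Schwarz, convert the time-derivative term via Lemma~\ref{thm:dtaualpha}, and conclude with the discrete Gr\"onwall inequality of Lemma~\ref{lem:discrete_gronwall} together with the kernel bound \eqref{eq:Pjn_t0}. Your explicit treatment of the $n=1$ case (where $D_\tau^{\alpha_1^*}u_h^{1-\theta_1}=c_{0,1}^{(\alpha_1^*)}\triangledown_\tau u_h^1$ and $\theta_1\le 1/2$ makes the energy inequality elementary) is a minor point the paper glosses over, but otherwise the two arguments coincide.
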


\begin{proof} The mathematical induction method is applied to prove (\ref{eq:stable}).
Taking $w=u_h^{n-\theta_n}:=\theta_nu_h^{n-1}+(1-\theta_n)u_h^n$ in (\ref{eq:nn_time_space_dis}), we have for $n\geq 1$
\begin{equation} \label{eq:stable1}
   ((D_{\tau}^{\alpha_n^*}u_h)^{n-\theta_h},u_h^{n-\theta_n})+(\mathbf{K}\nabla u^{n-\theta_n}_h,\nabla u_h^{n-\theta_n})=
(f(\mathbf{x},t_{n-\theta_n}),u_h^{n-\theta_n}).
 \end{equation}
 Assumption A yields
 \begin{equation*} (\mathbf{K}\nabla u_h^{n-\theta_n},\nabla u_h^{n-\theta_n})\geq K_*\|\nabla u_h^{n-\theta_n}\|^2\geq 0.\end{equation*}
By  Schwartz inequality (\ref{eq:Poincare_inequality}), we have
\begin{equation*}
(f(\mathbf{x},t_{n-\theta_n}),u_h^{n-\theta_n})\leq \|f(\mathbf{x},t_{n-\theta_n})\|\|u_h^{n-\theta_n}\|\leq \|f(\mathbf{x},t_{n-\theta_n})\|\|u_h\| ^{n-\theta_n},
\end{equation*}
where $\|u_h\| ^{n-\theta_n}:=\theta_n\|u_h^{n-1}\|+(1-\theta_n)\|u_h^n\|$.
Inserting (\ref{eq:ckn-v}) and above two inequality into  (\ref{eq:stable1}), we have for $n\geq 1$
\begin{eqnarray*}
   \frac{1}{2}(D_{\tau}^{\alpha_n^*}\|u_h\|^2)^{n-\theta_n}=\frac{1}{2}\sum_{k=1}^{n}c_{n-k,n}^{(\alpha_n^*)}\left(\|u_h^k\|^2-\|u_h^{k-1}\|^2\right)\leq \left((D_{\tau}^{\alpha_n^*}u_h)^{n-\theta_n},u_h^{n-\theta_n}\right)
 \leq \|f(\mathbf{x},t_{n-\theta_n})\|\|u_h\| ^{n-\theta_n}.
\end{eqnarray*}
Using Lemma \ref{lem:discrete_gronwall}, we can obtain
\begin{eqnarray*}\|u_n^n\|&\leq& \|u_h^0\|+2\max_{1\leq k\leq n}\sum_{j=1}^k\mathbb{P}_{k-j}^{(k)}\|f(\mathbf{x},t_{j-\theta_j})\|\\
&\leq &
\|u_h^0\|+2\max_{1\leq j\leq n}\|f(\mathbf{x},t_{j-\theta_j})\|\max_{1\leq k\leq n}\sum_{j=1}^k\mathbb{P}_{k-j}^{(k)}\\
&\leq& \|u_h^0\|+2 (1+2^r)e^rt_n^{\alpha^*}\frac{
\max_{1\leq j\leq n}\Gamma(1+1/\ln N-\alpha_j^*)}{\Gamma(1+1/\ln N)} \max_{1\leq j\leq n}\|f(\mathbf{x},t_{j-\theta_j})\|.
\end{eqnarray*}
Therefore, the inequality (\ref{eq:stable}) holds.
This completes the proof.
\end{proof}

\begin{lemma}\label{thm:truncate_est}
Suppose that $u$ is the solution of the problem  (\ref{eq:tFDE})-(\ref{eq:tFDE1}) satisfying the assumption (\ref{eq:assump-solution})
  and Assumption A holds. Let $\alpha_n\in [\min_{t\in[t_{n-1},t_n]},$ $\min_{t\in[t_{n-1},t_n]}]$ with
  $\theta_n=\alpha_n/2$ and $\alpha_n^*=\alpha(t_{n-\theta_n})$,
then there exist positive constants $C_t$ and $C_{\tau}$ independent of $\tau$ and $n$ such that
\begin{equation}\label{eq:E1n_est}\|E_n+R_n\|\leq C_t t_{n-\alpha_n}^{-\alpha_n^*}
N^{-\min\{2, r\delta\}},\quad 1\leq n\leq N,
\end{equation}
and
\begin{equation}\label{eq:R3n_est0}\| (D_{\tau}^{\alpha_n^*}(u-\Pi_hu))^{n-\theta_n}\|\leq \frac{1}{\delta}C_{\tau}t_{n-\alpha_n}^{-\alpha_n^*}h^\mu,\quad 1\leq n\leq N,
\end{equation}
where $\Pi_h$ be the elliptic projection operator defined in (\ref{eq:Pi_h}) and $\mu=\min\{p+1,s\}, s>1$.
\end{lemma}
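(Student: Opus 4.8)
The plan is to reduce each of the two estimates to tools already established: (\ref{eq:E1n_est}) to the truncation-error Lemmas \ref{lem:v_format1} and \ref{lem:CaputoEst1}, and (\ref{eq:R3n_est0}) to the positivity of the kernels $c^{(\alpha_n^*)}_{n-k,n}$ (Lemma \ref{thm:ckn_pro}) together with the projection estimate (\ref{eq:Pi_error}) and, once more, Lemma \ref{lem:CaputoEst1}. First I would prove (\ref{eq:E1n_est}) by splitting $E_n+R_n$. For $R_n$, apply Lemma \ref{lem:CaputoEst1} to $v=u$, whose hypotheses are exactly the bounds in (\ref{eq:assump-solution}); this gives $\|R_n\|\le C_L t_{n-\theta_n}^{-\alpha_n^*}N^{-\min\{3-\alpha^*,r\delta\}}$, and because $3-\alpha^*>2$ and $t_{n-\theta_n}=t_n-\theta_n\tau_n\ge t_n-\alpha_n\tau_n=t_{n-\alpha_n}$ (using $\theta_n=\alpha_n/2\le\alpha_n$) this already reads $\|R_n\|\le C_L t_{n-\alpha_n}^{-\alpha_n^*}N^{-\min\{2,r\delta\}}$. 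For $E_n$, the point is that, by linearity of $\mathscr{L}$ and since $\theta_n t_{n-1}+(1-\theta_n)t_n=t_{n-\theta_n}$, we have $E_n=\mathscr{L}u^{n-\theta_n}-\mathscr{L}u(\cdot,t_{n-\theta_n})=(\mathscr{L}u)^{n-\theta_n}-(\mathscr{L}u)(\cdot,t_{n-\theta_n})$, i.e. $E_n$ is precisely the temporal linear-interpolation error of the function $\mathscr{L}u$ at $t_{n-\theta_n}$; applying Lemma \ref{lem:v_format1} to $v=\mathscr{L}u$ (valid since $\theta_n\in[0,1/2]$ and $\|\p_t^l(\mathscr{L}u)(\cdot,t)\|\le C(1+t^{\delta-l})$, $l=0,1,2$, follows from (\ref{eq:assump-solution}) and the bounds on $\mathbf{K}$ in Assumption A, equivalently from $\mathscr{L}u=f-{}^C_0D_t^{\alpha(t)}u$) yields $\|E_n\|\le C t_{n-\theta_n}^{-\alpha_n^*}N^{-\min\{2,r\delta\}}\le C t_{n-\alpha_n}^{-\alpha_n^*}N^{-\min\{2,r\delta\}}$. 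Summing the two bounds gives (\ref{eq:E1n_est}).

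For (\ref{eq:R3n_est0}) I would first use $c^{(\alpha_n^*)}_{n-k,n}>0$ and (\ref{eq:Caputo_v_disc}) to pass to norms,
\[
\big\|D_{\tau}^{\alpha_n^*}(u-\Pi_hu)^{n-\theta_n}\big\|\le\sum_{k=1}^{n}c^{(\alpha_n^*)}_{n-k,n}\big\|(u-\Pi_hu)(\cdot,t_k)-(u-\Pi_hu)(\cdot,t_{k-1})\big\|,
\]
and then, writing the $k$th increment as $\int_{t_{k-1}}^{t_k}(I-\Pi_h)\p_s u(\cdot,s)\,ds$ and using (\ref{eq:Pi_error}) with the regularity of $\p_t u$ (so that $\|(I-\Pi_h)\p_s u(\cdot,s)\|\le C_I h^\mu(1+s^{\delta-1})$), bound it by $C_I h^\mu\int_{t_{k-1}}^{t_k}(1+s^{\delta-1})\,ds=C_I h^\mu\big(\phi(t_k)-\phi(t_{k-1})\big)$ with $\phi(t):=t+t^\delta/\delta$. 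Since $\phi(0)=0$, the last sum equals $C_I h^\mu\,D_\tau^{\alpha_n^*}\phi^{n-\theta_n}$, i.e. $C_I h^\mu$ times the $L2-1_\sigma$ approximation of ${}^C_0D_t^{\alpha_n^*}\phi(t_{n-\theta_n})$ for the $\mathbf{x}$-independent function $\phi$; note the $\delta^{-1}$ in (\ref{eq:R3n_est0}) has now entered through $\int_0^{t}s^{\delta-1}\,ds=t^\delta/\delta$.

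It remains to bound $D_\tau^{\alpha_n^*}\phi^{n-\theta_n}$ by $C\,\delta^{-1}t_{n-\alpha_n}^{-\alpha_n^*}$, for which I would write $D_\tau^{\alpha_n^*}\phi^{n-\theta_n}\le\big|D_\tau^{\alpha_n^*}\phi^{n-\theta_n}-{}^C_0D_t^{\alpha_n^*}\phi(t_{n-\theta_n})\big|+{}^C_0D_t^{\alpha_n^*}\phi(t_{n-\theta_n})$. The first term is controlled by Lemma \ref{lem:CaputoEst1} applied to the scalar $\phi$, whose derivatives $\phi^{(l)}$, $l=0,1,2,3$, satisfy the required bounds with a constant of order $\delta^{-1}$; this gives $\le C_L\delta^{-1}t_{n-\theta_n}^{-\alpha_n^*}N^{-\min\{3-\alpha^*,r\delta\}}\le C_L\delta^{-1}t_{n-\alpha_n}^{-\alpha_n^*}$. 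For the second term one computes explicitly ${}^C_0D_t^{\alpha_n^*}\phi(t)=\frac{t^{1-\alpha_n^*}}{\Gamma(2-\alpha_n^*)}+\frac{1}{\delta}\,\frac{\Gamma(1+\delta)}{\Gamma(1+\delta-\alpha_n^*)}\,t^{\delta-\alpha_n^*}$ and, using $t^{1-\alpha_n^*}\le T t^{-\alpha_n^*}$ and $t^{\delta-\alpha_n^*}\le T^\delta t^{-\alpha_n^*}$ on $[0,T]$ together with the uniform two-sided bounds on the $\Gamma$-values involved (in the spirit of Lemma \ref{lem:Gamma_est}), gets ${}^C_0D_t^{\alpha_n^*}\phi(t_{n-\theta_n})\le C\delta^{-1}t_{n-\theta_n}^{-\alpha_n^*}\le C\delta^{-1}t_{n-\alpha_n}^{-\alpha_n^*}$. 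Collecting everything and using $N^{-\min\{3-\alpha^*,r\delta\}}\le1$ yields (\ref{eq:R3n_est0}).

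The substantive issue — that Lemma \ref{lem:CaputoEst1} holds under the relaxed requirement (\ref{cond}), i.e. the treatment of the mismatch $\alpha_n\neq\alpha_n^*$ via the cancellation estimate (\ref{eq:alphan}) — has already been isolated in that lemma, so the remaining difficulty here is bookkeeping: (i) recognizing $\sum_k c^{(\alpha_n^*)}_{n-k,n}\int_{t_{k-1}}^{t_k}(1+s^{\delta-1})\,ds$ as a discrete Caputo derivative of $\phi$, which is what lets one recover the sharp singular weight $t_{n-\alpha_n}^{-\alpha_n^*}$ (a cruder estimate using only $c^{(\alpha_n^*)}_{n-k,n}\le c^{(\alpha_n^*)}_{0,n}$ would produce a spurious $\tau_n^{-\alpha_n^*}$); and (ii) carrying the exact $\delta^{-1}$ factor through. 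The one preliminary check I would not skip is that $\mathscr{L}u$ genuinely inherits the temporal regularity $\|\p_t^l(\mathscr{L}u)(\cdot,t)\|\le C(1+t^{\delta-l})$ (and, for the $L^2$-norm form of (\ref{eq:E1n_est}), that $\mathscr{L}u(\cdot,t)\in L_2(\Omega)$ with that bound), which is where the full strength of the solution-regularity part of Assumption A is used.
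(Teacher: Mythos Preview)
Your proposal is correct and follows essentially the same route as the paper: for (\ref{eq:E1n_est}) the paper also just invokes Lemmas \ref{lem:v_format1} and \ref{lem:CaputoEst1}, and for (\ref{eq:R3n_est0}) it likewise passes to norms via the positivity of $c^{(\alpha_n^*)}_{n-k,n}$, bounds each increment by $C_Ih^\mu\int_{t_{k-1}}^{t_k}t^{\delta-1}\,dt$, recognizes the resulting sum as $D_\tau^{\alpha_n^*}(t^\delta)^{n-\theta_n}$, and then splits this into the truncation error (Lemma \ref{lem:CaputoEst1}) plus the exact ${}^C_0D_t^{\alpha_n^*}(t^\delta)$. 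The only cosmetic difference is that you keep the full $1+t^{\delta-1}$ and work with $\phi(t)=t+t^\delta/\delta$ rather than $t^\delta$; your explicit flag about verifying the temporal $L_2$-regularity of $\mathscr{L}u$ is a point the paper passes over silently.
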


\begin{proof} 
 Making use of
 Lemma \ref{lem:v_format1} and Lemma \ref{lem:CaputoEst1}, we can obtain the inequality (\ref{eq:E1n_est}).
 Denote $\eta:=u-\Pi_hu$.
We use (\ref{eq:Caputo_v_disc}) and (\ref{eq:ckn_def}) to get
\begin{eqnarray*} \| (D_{\tau}^{\alpha_n^*}\eta)^{n-\theta_n}\|&=& \left\|
\sum_{k=1}^nc_{n-k,n}^{(\alpha_n^*)}\left(
  \eta(\mathbf{x},t_k)-\eta(\mathbf{x},t_{k-1})\right)\right\|= \left\|\sum_{k=1}^nc_{n-k,n}^{(\alpha_n^*)}
  \int_{t_{k-1}}^{t_k}\left(u_t(\mathbf{x},t)-\Pi_hu_t(\mathbf{x},t)\right)dt\right\|\\
  &\leq &\sum_{k=1}^nc_{n-k,n}^{(\alpha_n^*)}
  \int_{t_{k-1}}^{t_k}\left\| u_t(\mathbf{x},t)-\Pi_hu_t(\mathbf{x},t)\right\| dt
\end{eqnarray*}
It follows from (\ref{eq:Pi_error}) and the assumption (\ref{eq:assump-solution}) of the solution
$$ \int_{t_{k-1}}^{t_k}\left\| u_t(\mathbf{x},t)-\Pi_hu_t(\mathbf{x},t)\right\| dt \leq C_Ih^\mu Q\int_{t_{k-1}}^{t_k}t^{\delta-1} dt
=\frac{1}{\delta}C_IQh^\mu(t_k^{\delta}-t_{k-1}^{\delta}).
$$
Lemma \ref{lem:CaputoEst1} yields
\begin{eqnarray*} \sum_{k=1}^nc_{n-k,n}^{(\alpha_n^*)}(t_k^{\delta}-t_{k-1}^{\delta})
&=&\left|D_{\tau}^{\alpha_n^*}(t^{\delta})^{n-\theta_n}-\  ^C_0D_t^{\alpha_n^*}(t_{n-\theta_n}^{\delta})\right|+\left| ^C_0D_t^{\alpha_n^*}(t_{n-\theta_n}^{\delta})\right|\\
&\leq &  C_Lt_{n-\theta_n}^{-\alpha_n^*}
N^{-\min\{3-\alpha^*, r\delta\}}+\delta t_{n-\theta_n}^{\delta-\alpha_n^*}\frac{\Gamma(\delta)}{\Gamma(1+\delta-\alpha_n^*)}\\
&\leq &  C_Lt_{n-\theta_n}^{-\alpha_n^*}
N^{-\min\{3-\alpha^*, r\delta\}}+\delta t_{n-\theta_n}^{\delta-\alpha_n^*}\frac{\Gamma(\delta)}{\Gamma(1+\delta-\alpha^*)}\\
&\leq & Ct_{n-\theta_n}^{-\alpha_n^*}
\end{eqnarray*}
Thus, we have
\begin{eqnarray*} \|(D_{\tau}^{(\alpha_n^*)}(u-\Pi_hu))^{n-\theta_n}\|&\leq & \frac{1}{\delta}Ct_{n-\alpha_n}^{-\alpha_n^*}C_IQh^\mu .
\end{eqnarray*}

\end{proof}

Define the discrete-in-time $l^{\infty}$ norm as $\|v\|_{\hat{l}^{\infty}(L_2)}:=\max_{1\leq n\leq N}\|v(\cdot,t_n)\|$. Now we prove the error estimate of $u-u_n^n$ under the norm $\|\cdot\|_{\hat{l}^{\infty}(L_2)}$ in the following theorem.

\begin{theorem}\label{thm:full_error}
 Suppose that $u$ is the solution of the problem  (\ref{eq:tFDE})-(\ref{eq:tFDE1}) satisfying the assumption (\ref{eq:assump-solution})
 and $u_h^n$ is the solution of the discrete problem (\ref{eq:nn_time_space_dis}). Let Assumption A hold and $\alpha_n$ satisfy the condition (\ref{cond}) with $\theta_n=\alpha_n/2$ and $\alpha_n^*=\alpha(t_{n-\theta_n})$.
 Then we have
\begin{equation}\label{eq:full_error}
\|u-u_h\|_{\hat{l}^{\infty}(L_2)} \leq C_e  
\left(N^{-\min\{2, r\delta\}}+h^\mu \right),
\end{equation}
where $\mu=\min\{p+1,s\}, s>1$.
\end{theorem}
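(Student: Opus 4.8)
The plan is to use the standard error splitting into an elliptic projection error and a finite-element residual, and then to close the estimate with the $\alpha$-robust discrete fractional Gr\"onwall inequality of Lemma \ref{lem:discrete_gronwall}. Write $u(\cdot,t_n)-u_h^n=\eta^n+\xi^n$ with $\eta^n:=u(\cdot,t_n)-\Pi_h u(\cdot,t_n)$ and $\xi^n:=\Pi_h u(\cdot,t_n)-u_h^n\in S_h$. The projection part $\eta^n$ is controlled directly by (\ref{eq:Pi_error}) and the regularity in Assumption~A, giving $\|\eta^n\|\le C_I h^\mu$ for all $n$; so the work is to estimate $\xi^n$. First I would derive the error equation: subtracting (\ref{eq:nn_time_space_dis}) from the variational identity (\ref{eq:nn_time_space_dis00}) restricted to test functions $w_h\in S_h$, and using the orthogonality (\ref{eq:Pi_h}) defining $\Pi_h$ to eliminate the term $(\mathbf{K}\nabla\eta^{n-\theta_n},\nabla w_h)$, yields
\begin{equation*}
(D_{\tau}^{\alpha_n^*}\xi^{n-\theta_n},w_h)+(\mathbf{K}\nabla\xi^{n-\theta_n},\nabla w_h)=(E_n+R_n,w_h)-(D_{\tau}^{\alpha_n^*}\eta^{n-\theta_n},w_h),\qquad\forall w_h\in S_h.
\end{equation*}

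Next I would take $w_h=\xi^{n-\theta_n}:=\theta_n\xi^{n-1}+(1-\theta_n)\xi^n$. The diffusion term is nonnegative by Assumption~A, and the crucial structural point is that the condition (\ref{cond}), which guarantees $\alpha_n^*\ge\alpha_n$, lets me invoke Lemma \ref{thm:dtaualpha} (for $n\ge 2$) to get $(D_{\tau}^{\alpha_n^*}\xi^{n-\theta_n},\xi^{n-\theta_n})\ge\frac{1}{2}\sum_{k=1}^n c_{n-k,n}^{(\alpha_n^*)}(\|\xi^k\|^2-\|\xi^{k-1}\|^2)$; the case $n=1$ is handled directly using $\xi^0=0$ (since $u_h^0=\Pi_h u_0$) together with $1-\theta_1\ge 1/2$, which gives the same inequality. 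Applying Cauchy--Schwarz to the right-hand side and using $\|\xi^{n-\theta_n}\|\le\theta_n\|\xi^{n-1}\|+(1-\theta_n)\|\xi^n\|=:\|\xi\|^{n-\theta_n}$, I obtain
\begin{equation*}
D_{\tau}^{\alpha_n^*}(\|\xi\|^2)^{n-\theta_n}\le 2\big(\|E_n+R_n\|+\|D_{\tau}^{\alpha_n^*}\eta^{n-\theta_n}\|\big)\,\|\xi\|^{n-\theta_n},\qquad 1\le n\le N,
\end{equation*}
which is precisely of the form required by Lemma \ref{lem:discrete_gronwall} (with the $\eta$-term there set to zero).

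Then I would apply Lemma \ref{lem:discrete_gronwall}; since $\xi^0=0$ this gives $\|\xi^n\|\le 2\max_{1\le k\le n}\sum_{j=1}^k\mathbb{P}_{k-j}^{(k)}\big(\|E_j+R_j\|+\|D_{\tau}^{\alpha_j^*}\eta^{j-\theta_j}\|\big)$. Now I insert the truncation bounds of Lemma \ref{thm:truncate_est}, namely $\|E_j+R_j\|\le C_t\,t_{j-\alpha_j}^{-\alpha_j^*}N^{-\min\{2,r\delta\}}$ and $\|D_{\tau}^{\alpha_j^*}\eta^{j-\theta_j}\|\le\frac{1}{\delta}C_{\tau}\,t_{j-\alpha_j}^{-\alpha_j^*}h^\mu$, reduce $t_{j-\alpha_j}^{-\alpha_j^*}\le 2^{r\alpha^*}t_j^{-\alpha_j^*}$ using (\ref{eq:tn1_tn}) (and a trivial bound at $j=1$), and then bound $\sum_{j=1}^k\mathbb{P}_{k-j}^{(k)}t_j^{-\alpha_j^*}$ by the $\alpha$-robust kernel estimate (\ref{eq:Pjn_tj}). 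Absorbing the harmless factor $2^{r\alpha^*}$ into the constant and recalling that the singularity exponent $\delta$ is the initial order $\alpha_0=\alpha(0)$, this produces $\|\xi^n\|\le 2C(1+2^r)e^r\frac{\max_{1\le j\le n}\Gamma(1+l_N-\alpha_j^*)}{\Gamma(1+l_N)}\big(N^{-\min\{2,r\alpha_0\}}+h^\mu\big)$. Adding $\|\eta^n\|\le C_I h^\mu$ and taking the maximum over $1\le n\le N$ yields the claimed bound (\ref{eq:full_error}).

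The main obstacle is purely structural: the whole argument rests on the positivity estimate of Lemma \ref{thm:dtaualpha}, whose hypothesis $\alpha_n^*\ge\alpha_n$ is exactly what condition (\ref{cond}) secures, and on the $\alpha$-robust Gr\"onwall inequality together with the kernel-sum bound (\ref{eq:Pjn_tj}); once these are available, the error equation, the Cauchy--Schwarz step, and the substitution of the truncation estimates are routine. The only points requiring a little care are the separate treatment of $n=1$, where Lemma \ref{thm:dtaualpha} is not stated, and the bookkeeping that converts $t_{n-\alpha_n}^{-\alpha_n^*}$ into $t_n^{-\alpha_n^*}$ so that (\ref{eq:Pjn_tj}) can be applied.
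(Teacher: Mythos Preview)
Your proposal is correct and follows essentially the same approach as the paper: the same splitting $u^n-u_h^n=\eta^n+\xi^n$, the same error equation obtained via the $\Pi_h$-orthogonality, the same test function $w_h=\xi^{n-\theta_n}$ combined with Lemma~\ref{thm:dtaualpha}, then Lemma~\ref{lem:discrete_gronwall}, Lemma~\ref{thm:truncate_est}, and the kernel bound (\ref{eq:Pjn_tj}). The only cosmetic difference is that the paper converts the weight via the simpler $t_{j-\theta_j}\ge(1-\theta_j)t_j\ge\tfrac12 t_j$ (yielding a factor $2$) rather than your use of (\ref{eq:tn1_tn}) (yielding $2^{r\alpha^*}$), and the paper does not isolate the $n=1$ case explicitly; neither affects the argument.
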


\begin{proof}%The mathematical induction method will be used to prove (\ref{eq:full-error}).
Let $\eta^n:=u^n-\Pi_hu^n, \xi_h^n:=\Pi_hu^n-u_h^n$ and $e_h^n:=u^n-u_h^n=\eta^n+\xi_h^n$.
Subtracting (\ref{eq:nn_time_space_dis}) from (\ref{eq:nn_time_space_dis0}), we have for any $w_h\in S_h$
\begin{equation}
 ((D_{\tau}^{(\alpha_n^*)e_h)^{n-\theta_n}},w_h)+(\mathbf{K}\nabla e_h^{n-\theta_n},\nabla w_h)=
(E_n+R_n,w_h),\quad n\geq 1.
 \label{eq:error}\end{equation}
and $e_h^0=\eta^0,\ \xi_h^0=0$.
%Let
%$$R_3^n:=E_1^n- D_{\tau}^{(\alpha_n^*)}\eta^n,\quad n\geq 1.$$
Taking $w_h=\xi_h^{n-\theta_n}$ and using the definition of $\Pi_h$, the equation (\ref{eq:error})  can be rewritten into for $n\geq 1$
\begin{eqnarray*}
   ((D_{\tau}^{(\alpha_n^*)\xi_h)^{n-\theta_n}},\xi_h^{n-\theta_n})
 +(\mathbf{K}\nabla \xi_h^{n-\theta_n},\nabla \xi_h^{n-\theta_n})=
(E_n+R_n-(D_{\tau}^{\alpha_n^*}\eta)^{n-\theta_n},\xi_h^{n-\theta_n}).
\end{eqnarray*}
Using the similar inference of  Theorem \ref{thm:stable}, we have
\begin{eqnarray*}
 \|\xi_h^n\|&\leq &\|\xi_h^0\|+ 2\max_{1\leq k\leq n}\sum_{j=1}^k\mathbb{P}_{k-j}^{(k)}\|E_j+R_j-(D_{\tau}^{\alpha_j^*}\eta)^{j-\theta_j}\|\\
&\leq & \|\xi_h^0\|+ 2C\max_{1\leq k\leq n}\sum_{j=1}^k\mathbb{P}_{k-j}^{(k)}t_{j-\theta_j}^{-\alpha_j^*}
\left(N^{-\min\{2, r\delta\}}+h^\mu \right).
\end{eqnarray*}
Since the inequality (\ref{eq:tn1_tn}) yields
$$t_{j-\theta_j}=\theta_jt_{j-1}+(1-\theta_j)t_j\geq (1-\theta_j)t_j\leq \frac{1}{2}t_j,$$
then we have
$$t_{j-\theta_j}^{-\alpha_j^*}\leq 2^{\alpha_j^*}t_{j}^{-\alpha_j^*}\leq 2t_{j}^{-\alpha_j^*}.$$
Using the inequality (\ref{eq:Pjn_tj}) and $\xi_h^0=0$, we can obtain
\begin{equation*}
 \|\xi_h^n\|\leq   2C(1+2^r)e^r\frac{
\max_{1\leq j\leq n}\Gamma(1+l_N-\alpha_j^*)}{\Gamma(1+l_N)}
\left(N^{-\min\{2, r\delta\}}+h^\mu \right).
\end{equation*}
  By triangular inequality and the estimate (\ref{eq:Pi_error}), we have
 \begin{eqnarray*}\max_{1\leq n\leq N}\|e_h^n\|&\leq & \max_{1\leq n\leq N}\left(\|\xi_h^n\|+\|\eta^n\|\right)\\
 &\leq &\left(2(1+2^r)e^r\frac{
\max_{1\leq j\leq N}\Gamma(1+l_N-\alpha_j^*)}{\Gamma(1+l_N)}+C_I\right)
\left(N^{-\min\{2, r\delta\}}+h^\mu \right).\end{eqnarray*}
  This completes the proof of (\ref{eq:full_error}).
\end{proof}

\section{Numerical Examples}

In our numerical experiments, we employ a uniform triangular partition featuring $M+1$ nodes per spatial direction with a mesh size of $h = 1/M$, along with a graded mesh comprising $N+1$ nodes in the temporal direction.

Let $e_h^n=u^n-u_h^n$ and
$$\|e_h\|_{\hat{l}^{\infty}(L_2)}:=\max_{1\leq n\leq N}\|e_h^n\|_{L^2(\Omega)}, \quad \text{order}:=\frac{\|e_h\|_{\hat{l}^{\infty}(L_2)}}{\|e_{h/2}\|_{\hat{l}^{\infty}(L_2)}}.$$

\subsection{Numerical results for variable-exponent subdiffusion equation}

We present some examples to show the initial singularity of the solution and the optimal convergent orders for our method. 
Wang and Zheng in \cite{WANG20191778} established the well-posedness and regularity of the problem (\ref{eq:tFDE11}), demonstrating that the initial singularity of its solution depends on the value of the order function $\alpha(t)$ at $t = 0$. However, for the subdiffusion equation, no relevant theoretical results have been reported yet. We will give Example \ref{exp:700}  to numerically illustrate the initial singularity of the solution for the subdiffusion equation and provide in Example \ref{exp:70} an exact solution whose initial singularity is explicitly determined by $\alpha(0)$.

\begin{example}\label{exp:700}(See \cite{Zheng2024}) Let $\Omega=(0,1)^2, \alpha(t)=\alpha_0+0.1t, u_0=\sin(\pi x)\sin(\pi y)$ and
$f=x(1-x)y(1-y)$. In order to observe the initial singularity of solutions, we
take $T=0.1$. We use the uniform spatial partition with the mesh size $h=1/64$ and
the uniform temporal mesh size $\tau=1/100$. The degree of the polynomial in finite element space is $p=1, 2$. The initial singularity of the numerical  solutions $u_h^n(\frac{1}{2},\frac{1}{2})$ for $\alpha_0=0.2, 0.4, 0.6, 0.8$ are shown in figure \ref{fig:initial_singular}.
\end{example}

%\begin{figure}
% \centering  
%  \includegraphics[width=0.5\paperwidth]{initial_singular2.jpg}
% \caption{The initial singularity of the solutions $u_h^n(\frac{1}{2},\frac{1}{2})$ for $\alpha(0)=0.4, 0.6, 0.8$.}
% \label{fig:initial_singular}
%\end{figure}

\begin{figure}
     \begin{minipage}[c]{0.5\linewidth}
       \includegraphics[width=0.35\paperwidth]{initial_singular1.jpg}
 \end{minipage}%
 \begin{minipage}[c]{0.5\linewidth}
    \includegraphics[width=0.35\paperwidth]{initial_singular2.jpg}
\end{minipage}
 \caption{Initial singularity of the solutions $u_h^n(\frac{1}{2},\frac{1}{2},t)$ for $\alpha(0)=0.2,0.4, 0.6, 0.8$ with $p=1$ (left) and $p=2$ (right).}
  \label{fig:initial_singular}
\end{figure}

In Example \ref{exp:700}, we check the initial singularity of the solution. The order function $\alpha(t)$, initial value $u_0$ and right function $f$ are given but the exact solution is unknown. We plot the graph of $u_h^n$ at the point $(x,y)=(1/2,1/2)$ with respect to time. Figure \ref{fig:initial_singular} shows the numerical solution has the initial singularity and a smaller value of $\alpha(0)$ leads to a stronger initial singularity.

\begin{example}\label{exp:70}Suppose that the domain $\Omega=(0,1)^2$, the time interval $[0,T]=[0,1], \mathbf{K}=\mathbf{I}$ where $\mathbf{I}$ is the identity matrix, and the variable order $\alpha(t)$ is given by
$$\alpha(t)=0.9+(\delta-0.9)\left(1-t-\frac{\sin(2\pi(1-t))}{2\pi}\right).$$
Then $\alpha(t)$ is monotonically increasing over the interval  $[0,T]$.
The exact solution is given by $$u(x,y,t)=(1+t^{\delta})\sin(\pi x)\sin(\pi y).$$
Thus, the source term $f$, the initial condition, boundary condition are evaluated accordingly. The degree of polynomial in finite element space is $p=2$.
\end{example}

%In numerical experiments, we find that compared to quadratic polynomials in finite element space, linear polynomials require finer grids but their approximate error are same. Because we mainly focus on approximate error in time discretization, we use quadratic polynomials in order to reduce computational complexity.

Because $\alpha_n\in[\min_{t\in [t_{n-1},t_n]}\alpha(t),\max_{t\in [t_{n-1},t_n]}\alpha(t)]\subset [0,1]$,  we have $\alpha_n/2<0.5$ and then $t_{n-\alpha_n/2}>t_{n-a}$ for any $a\geq 0.5$. Also since $\alpha(t)$ is  monotonically increasing, then $\alpha_n^*=\alpha(t_{n-\alpha_n/2})>\alpha(t_{n-a})$ for any $a\geq 0.5$. Thus, all the points $t_{n-\alpha_n/2}$ with $\alpha_n=\alpha(t_{n-a})$ for any $a\geq 0.5$  satisfy the condition $\alpha_n^*=\alpha(t_{n-\alpha_n/2})>\alpha_n$ in (\ref{cond}).

In Table \ref{tab:exmp701}, we check the case for superconvergent points.  Take two superconvergent  points $t_{n-\alpha_n/2}$ with $\alpha_n=\alpha(t_{n-0.6}), \alpha(t_{n-0.8})$ and the third superconvergent point $t_{n-\alpha_n/2}$ where $\alpha_n$ is the solution of the nonlinear equation $\kappa=\alpha(t_{n-\kappa/2})$ and can be obtained  by Newton's iterate method in \cite{DU20202952}. Table \ref{tab:exmp701} shows there exist lots of superconvergent points satisfying the condition (\ref{cond}) and our method has the same convergent order for different superconvergent points.

In Table \ref{tab:exmp702}, we check the numerical accuracy of the solution with initial singularity. In Table \ref{tab:exmp702}, we take different graded mesh $r=1,2,3,4$, different initial value $\delta=0.4, 0.6, 0.8$  and superconvergent point $t_{n-\alpha_n/2}$ with $\alpha_n=\alpha(t_{n-0.5})$.
 Because the exact solution $u\leq C(1+t^{\delta})$, we know from Theorem \ref{thm:full_error} that
$$\|e_h\|_{\hat{l}^{\infty}(L_2)}\leq C (N^{-\min\{2,r\delta\}}+h^{p+1}).$$
Table \ref{tab:exmp702} shows the convergent order  with respect to temporal mesh size depends on $r\delta$ as $r\delta<2$ while the order is 2 as $r\delta>2$.

 \begin{table*}[!t]%
\centering %
\caption{ Convergent orders for graded mesh $r=2$  with superconvergent points $t_{n-\alpha_n/2}$ and $M=128$ in Example \ref{exp:70} }
\label{tab:exmp701}
\begin{tabular*}{\textwidth}{@{\extracolsep\fill}llllllll@{\extracolsep\fill}}
\toprule
&&  \multicolumn{2}{c}{$\alpha_n=\alpha(t_{n-0.6})$}& \multicolumn{2}{c}{$\alpha_n=\alpha(t_{n-0.8})$}& \multicolumn{2}{c}{$\alpha_n=\alpha(t_{n-\alpha_n/2})$}\\ \cmidrule{3-4}\cmidrule{5-6}\cmidrule{7-8}
$\delta$&$N$&$\|e_h\|_{\hat{l}^{\infty}(L_2)}$& order& $\|e_h\|_{\hat{l}^{\infty}(L_2)}$ &order& $\|e_h\|_{\hat{l}^{\infty}(L_2)}$ &order\\ \hline
0.2 &16&1.2415e-02 && 1.2415e-02&&1.2414-02&\\
  &32& 8.9943e-03 &0.4759& 8.9943e-03&0.4759&8.9942-03&0.4759\\
 &64&  6.4422e-03  &0.4870&  6.4422e-03&0.4870&6.4421-03&0.4870
 \\\hline
 0.4 &16&   4.9659e-03  &&  4.9659e-03&&4.9659-03&\\
  &32& 2.2476e-03 &1.1707&  2.2476e-03&1.1707&2.2476-03&1.1707\\
  &64& 9.4294e-04  &1.2676& 9.4294e-04&1.2676&9.4294-04&1.2676
  \\\hline
 0.6 &16& 9.2667e-04 &&  9.2667e-04 &&  9.2667e-04&\\
  &32&  2.5979e-04 &1.8781&  2.5979e-04  &1.8781& 2.5979e-04&1.8781\\
  &64&  6.9064e-05 &1.9334&  6.9064e-05 &1.9334&  6.9064e-05&1.9334\\
\bottomrule
\end{tabular*} 
\end{table*}

\begin{table*}[!t]%
\centering %
\caption{ Convergent orders for different temporal mesh with $\alpha_n=\alpha(t_{n-1/2})$ and $M=128$ in Example \ref{exp:70} }
\label{tab:exmp702}
\begin{tabular*}{\textwidth}{@{\extracolsep\fill}llllllll@{\extracolsep\fill}}
\toprule
&& $\delta=0.4$&& $\delta=0.6$&&$\delta=0.8$&\\ \cmidrule{3-4}\cmidrule{5-6}\cmidrule{7-8}
$r$&$N$ & $\|e_h\|_{\hat{l}^{\infty}(L_2)}$&order&$\|e_h\|_{\hat{l}^{\infty}(L_2)}$& order& $\|e_h\|_{\hat{l}^{\infty}(L_2)}$ &order\\ \hline
1&16& 1.9893e-02 &&  1.0578e-02 &&  3.3196e-03&\\
  &32& 1.4350e-02 &0.4713&  6.1210e-03  &0.7893& 1.4436e-03&1.2013\\
 &64&  1.0226e-02  &0.4888& 3.4041e-03  &0.8465& 5.8275e-04&1.3087\\
\hline
2&16& 4.9659e-03 &&  9.2667e-04  && 2.3151e-04&\\
   &32&2.2476e-03 &1.1707&  2.5979e-04 &1.8781&  5.8541e-05&2.0304\\
  &64& 9.4294e-04  &1.2676& 6.9064e-05 &1.9334&  1.4761e-05 &2.0106
  \\\hline
  3&16&  1.1013e-03  && 5.2116e-04 &&  3.4158e-04&\\
  &32& 3.2930e-04 &1.8253&  1.3050e-04 &2.0935&  8.5634e-05&2.0917\\
  &64&  9.2144e-05  &1.8801& 3.2664e-05  &2.0448& 2.1437e-05&2.0445
  \\\hline
4&16&  9.2537e-04 &&  9.1094e-04&&   5.9965e-04&\\
  &32& 2.3236e-04 &2.1394&  2.2834e-04&2.1421&   1.5065e-04&2.1386\\
  &64& 5.8149e-05 &2.0688&  5.7188e-05 &2.0676&  3.7753e-05&2.0667\\
\bottomrule
\end{tabular*} 
\end{table*}

\begin{example}\label{exp:71}
Suppose that the domain $\Omega=(0,1)^2$, the time interval $[0,T]=[0,1], \mathbf{K}=0.001\mathbf{I}$ where $\mathbf{I}$ is the identity matrix, and the variable order $\alpha(t)$ is given by
$$\alpha(t)=0.9e^{-t}.$$
Then $\alpha(t)$ is monotonically decreasing over the interval  $[0,T]$.
The exact solution is given by $$u(x,y,t)=(1+t^{\delta})\sin(2\pi x)\sin(2\pi y).$$
Thus, the source term $f$, the initial condition, boundary condition are evaluated accordingly. The degree of polynomial in finite element space is $p=2$.
\end{example}

Because $\alpha(t)$ is monotonically decreasing, we have 
$$\alpha(t_n)\leq \alpha_n\in[\min_{t\in [t_{n-1},t_n]}\alpha(t),\max_{t\in [t_{n-1},t_n]}\alpha(t)]$$
 and then $t_{n-\alpha_n/2}\leq t_{n-\alpha(t_n)/a}<t_n$ for any $a\geq 2$. Thus, let $\alpha_n=\alpha(t_{n-\alpha(t_n)/a})$ for any $a\geq 2$ and then the condition $\alpha_n^*=\alpha(t_{n-\alpha_n/2})\geq \alpha_n$ holds.

In Table \ref{tab:exmp711}, we check the case for superconvergent point.
We take two superconvergent  points $t_{n-\alpha_n/2}$ with $\alpha_n=\alpha(t_{n-a_1}), \alpha(t_{n-a_2})$ and $a_1=\alpha(t_n)/4, a_2=\alpha(t_n)/2$  and the third superconvergent point $t_{n-\alpha_n/2}$ where $\alpha_n$ is the solution of nonlinear equation $\kappa=\alpha(t_{n-\kappa/2})$  and can be obtained by Newton's iterate method  in \cite{DU20202952}. Table \ref{tab:exmp711} shows there exist lots of superconvergent points satisfying the condition (\ref{cond}) and our method has the same convergent order for different superconvergent points.

In Table \ref{tab:exmp712}, we check numerical accuracy of the solution with initial singularity. In Table \ref{tab:exmp712}, we take different graded mesh $r=1,2,3,4$, different initial value $\delta=0.4, 0.6, 0.8$ and superconvergent point $t_n$.
Table \ref{tab:exmp712} shows the convergent order with respective to temporal mesh size is $O(N^{-\min\{2,r\delta\}})$.

\begin{table*}[!t]%
\centering %
\caption{ Convergent orders for graded mesh $r=2$  with superconvergent points $t_{n-\alpha_n/2}$ and $M=128$  in Example \ref{exp:71} }
\label{tab:exmp711}
\begin{tabular*}{\textwidth}{@{\extracolsep\fill}llllllll@{\extracolsep\fill}}
\toprule
&& \multicolumn{2}{c}{$\alpha_n=\alpha(t_{n-a_1})$}& \multicolumn{2}{c}{$\alpha_n=\alpha(t_{n-a_2})$}&\multicolumn{2}{c}{$\alpha_n=\alpha(t_{n-\alpha_n/2})$}\\\cmidrule{3-4}\cmidrule{5-6}\cmidrule{7-8}
$\delta$&$N$ & $\|e_h\|_{\hat{l}^{\infty}(L_2)}$& order& $\|e_h\|_{\hat{l}^{\infty}(L_2)}$ &order& $\|e_h\|_{\hat{l}^{\infty}(L_2)}$ &order\\ \hline
0.2 &16&   1.2372e-02 &&   1.2374e-02 &&   1.2374e-02& \\
   &32&    8.9872e-03 &0.4720&   8.9877e-03  &0.4722&  8.9877e-03& 0.4722\\
  &64&     6.4411e-03  &0.4861&  6.4412e-03&0.4862&    6.4412e-03&0.4862
 \\\hline
 0.4 &16&  4.9589e-03 &&  4.9611e-03 &&  4.9611e-03&\\
  &32&   2.2479e-03 &1.1684&  2.2481e-03 &1.1689&  2.2481e-03&1.1689\\
  &64& 9.4311e-04 &1.2675&  9.4313e-04 &1.2677&  9.4313e-04&1.2677
  \\\hline
 0.8 &16&  2.2597e-04 &&  2.2647e-04 &&  2.2648e-04&\\
  &32&  5.7803e-05  &2.0134& 5.7854e-05  &2.0153& 5.7855e-05&2.0154\\
  &64&   1.4563e-05 &2.0118&  1.4569e-05 &2.0124&  1.4569e-05&2.0125\\
\bottomrule
\end{tabular*} 
\end{table*}

\begin{table*}[!t]%
\centering %
\caption{ Convergent orders for different graded mesh with $\alpha_n=\alpha(t_{n})$ and $M=128$  in Example \ref{exp:71} }
\label{tab:exmp712}
\begin{tabular*}{\textwidth}{@{\extracolsep\fill}llllllll@{\extracolsep\fill}}
\toprule
 &&$\delta=0.4$&& $\delta=0.6$&&$\delta=0.8$\\ \cmidrule{3-4}\cmidrule{5-6}\cmidrule{7-8}
$r$&$N$ & $\|e_h\|_{\hat{l}^{\infty}(L_2)}$&order&$\|e_h\|_{\hat{l}^{\infty}(L_2)}$& order& $\|e_h\|_{\hat{l}^{\infty}(L_2)}$ &order\\ \hline
1 &16&1.8659e-02 &&  1.0016e-02  && 3.2105e-03&\\
  &32& 1.3936e-02  &0.4211& 6.0011e-03  &0.7389& 1.4405e-03&1.1563\\
  &64& 1.0096e-02 &0.4650&  3.3874e-03 &0.8250&  5.8697e-04&1.2952\\
\hline
2&16& 4.9566e-03  && 9.2850e-04 &&  2.2547e-04&\\
  &32& 2.2476e-03 &1.1679&  2.6008e-04 &1.8793&  5.7752e-05&2.0114\\
  &64& 9.4308e-04 &1.2674&  6.9147e-05 &1.9333&  1.4557e-05&2.0111\\\hline
  3&16&1.1006e-03 &&  4.3396e-04 &&  2.2600e-04&\\
   &32&3.2934e-04 &1.8242&  1.1168e-04  &2.0521& 5.7927e-05&2.0582\\
  &64& 9.2154e-05  &1.8801& 2.8303e-05&2.0264&   1.4645e-05&2.0299
  \\\hline
4&16& 6.9060e-04 &&  5.1599e-04  && 3.3000e-04&\\
   &32&1.8453e-04 &2.0431&  1.3361e-04 &2.0918&  8.5707e-05&2.0871\\
  &64& 4.7736e-05 &2.0193&  3.3999e-05 &2.0438&  2.1858e-05&2.0405\\
\bottomrule
\end{tabular*} 
\end{table*}

\subsection{Numerical results for variable-exponent mobile-immobile problem}
\label{subsec2}

In this section, we  combine the $L2-1_\sigma$ method and finite element method to solve the variable-exponent mobile-immobile transport problem and only present the numerical scheme and some numerical results without theoretical analysis.  

Consider the variable-exponent mobile-immobile time-fractional problem \cite{ZHENG20211522}
\begin{equation}\label{eq:tFDE11}\begin{array}{l}
 u_t(\mathbf{x},t)+\ ^C_0D_t^{\alpha(t)}u(\mathbf{x},t)+\mathscr{L}u(\mathbf{x},t)=f(\mathbf{x},t), (\mathbf{x},t)\in\Omega\times (0,T],\\
 u(\mathbf{x},0)=u_0(\mathbf{x}),\mathbf{x}\in\Omega;\quad u(\mathbf{x},t)=0, (\mathbf{x},t)\in\p\Omega\times[0,T].
 \end{array}\end{equation}
 Choose  the appropriate parameter $\theta_n\in [0,1]$ and take $t=t_{n-\theta_n}$ in the equation (\ref{eq:tFDE11}) to get
\begin{equation}\label{eq:tn_alphan11}
u_t(\mathbf{x},t_{n-\theta_n})+ \ ^C_0D_t^{\alpha(t_{n-\theta_n})}u(\mathbf{x},t_{n-\theta_n})
+\mathscr{L}u(\mathbf{x},t_{n-\theta_n})=f(\mathbf{x},t_{n-\theta_n}).
\end{equation}
We use the same method in Section \ref{sec3} and \ref{sec4} to discretize the Caputo fractional differential term $\ ^C_0D_t^{\alpha(t)}u$ and space variables. For the discretization of $u_t(\cdot,t_{n-\theta_n})$, we apply the derivative of quadratic interpolate polynomial at three points $t_{n-2}, t_{n-1}, t_{n}$ to approximate $u(\cdot,t)$ in $(t_{n-1},t_n)$ and then replace $u_t(\cdot,t_{n-\theta_n})$ with $D_{\tau}u^n$ whose definition is the following: for a given sequence of functions $\{v^n\}_{n=0}^N$
 
\begin{eqnarray*}
D_{\tau}v^n=\left\{\begin{array}{ll}
\frac{1}{\tau}(v^1-v^0),\quad & n=1,\\
\frac{1}{2\tau}\left((3-2\theta_n)v^n-(4-4\theta_n)v^{n-1}+(1-2\theta_n)v^{n-2}\right),\quad &2\leq n.\end{array}\right.\label{eq:vt_disc}
\end{eqnarray*}
Denote $\alpha_n^*=\alpha(t_{n-\theta_n})$. Thus, (\ref{eq:tn_alphan11}) can be rewritten  as
 \begin{equation}\begin{array}{l}
(2-2\theta_1)D_{\tau}u^1+(D_{\tau}^{\alpha_1^*}u)^{1-\theta_1}+\mathscr{L}u^{1-\theta_1}=
f(\mathbf{x},t_{1-\theta_1})+(1-2\theta_1)u_t(\mathbf{x},t_0)+E_1^1,\\
D_{\tau}u^n+(D_{\tau}^{\alpha_n^*}u)^{n-\theta_n}+\mathscr{L}u^{n-\theta_n}=
f(\mathbf{x},t_{n-\theta_n})+E_1^n,\quad 2\leq n.\end{array}\label{eq:nn_time_dis12}
\end{equation}

Multiplying  the equations  %(\ref{eq:n1_time_dis11}) and
 (\ref{eq:nn_time_dis12}) by any $w\in H^1_0(\Omega)$, integrating
on $\Omega$, and applying integration by parts, we have the weak variational problem: find $u^n\in H^1_0(\Omega)$ such that
\begin{equation*}\begin{array}{l}
 (2-2\theta_1)(D_{\tau}u^1,w)+((D_{\tau}^{\alpha_1^*}u)^{1-\theta_1},w)+(\mathbf{K}\nabla u^{1-\theta_1},w)
 =(f(\mathbf{x},t_{1-\theta_1}),w)+(1-2\theta_1)(u_t(\mathbf{x},t_0),w)+(E_1^1,w) \\
 (D_{\tau}u^n,w)+ ((D_{\tau}^{\alpha_n^*}u)^{n-\theta_n},w)+(\mathbf{K}\nabla u^{n-\theta_n},\nabla w)=
(f(\mathbf{x},t_{n-\theta_n})+E_1^n,w), \ \ n\geq 2
\end{array}\end{equation*}
where $u_t(\mathbf{x},t_0)$ is unknown but through the original equation (\ref{eq:tFDE11}),
it can be calculated by
$$u_t(\mathbf{x},t_0)=f(\mathbf{x},0)-\mathscr{L}u(\mathbf{x},0)\doteq \hat{f}_1(\mathbf{x}), \ t_0=0.$$

 Dropping the local truncation error on the right-hand side, we obtain the finite element scheme: find $u^n_h\in S_h$ with $u^0_h:=\Pi_hu_0$ such that for any $w\in S_h$
\begin{equation}\label{eq:nn_time_space_dis13}
\begin{array}{l}
(2-2\theta_1)(D_{\tau}u^1_h,w)+((D_{\tau}^{(\alpha_1^*)}u_h)^{1-\theta_1},w)+(\mathbf{K}\nabla u^{1-\theta_1}_h,\nabla w)
 =(f(\mathbf{x},t_{1-\theta_1}),w)+(1-2\theta_1)(\hat{f}_1,w), \\ 
 (D_{\tau}u^n_h,w)+ ((D_{\tau}^{\alpha_n^*}u_h)^{n-\theta_n},w)+(\mathbf{K}\nabla u^{n-\theta_n}_h,\nabla w)=
(f(\mathbf{x},t_{n-\theta_n}),w),\ n\geq 2,\end{array}
 \end{equation} 
where the initial finite element approximation $u_h^0$ is obtained by $u_h^0=\Pi_hu(\mathbf{x},t_0)$. Similarly, we choose $\theta_n=\alpha_n/2$ where $\alpha_n$ is a value in the range of $\alpha(t)$ on the interval $[t_{n-1}, t_n]$, such that $\alpha_n\leq \alpha_n^*$.

\begin{example}\label{exp:72}Suppose that the domain $\Omega=(0,1)^2$, the time interval $[0,T]=[0,1], \mathbf{K}=0.001\mathbf{I}$ where $\mathbf{I}$ is the identity matrix, and the variable order $\alpha(t)$ is given by
$$\alpha(t)=\alpha(1)+(\alpha(0)-\alpha(1))\left(1-t-\frac{\sin(2\pi(1-t))}{2\pi}\right).$$%
%the right function $f$ is the following
%$$f(x,y,t)=\left(3-\alpha(0)+0.008\pi^2t+\frac{\Gamma(4-\alpha(0))}{\Gamma(4-\alpha(0)-\alpha(t))}
%t^{1-\alpha(t)}\right)t^{2-\alpha(0)}\sin(2\pi x)\sin(2\pi y).$$
The exact solution is given by $$u(x,y,t)=t^{3-\alpha(0)}\sin(2\pi x)\sin(2\pi y).$$
Thus, the initial condition $u_0(x,y)=0$, boundary condition $u(x,y,t)=0, (x,y)\in \Omega$ and the right function $f$ is derived from the exact solution $u$.
 The degree of polynomial in finite element space is $p=1,2$.
\end{example}

\begin{table*}[!t]%
\centering %
\caption{Errors and orders of $e_h$  with  $(\alpha(0),\alpha(T))=(0,0.7)$ and $M=256$ in Example \ref{exp:72}}%
\label{tab:exmp721}
\begin{tabular*}{\textwidth}{@{\extracolsep\fill}llllllll@{\extracolsep\fill}}
\toprule
&& \multicolumn{2}{c}{$\alpha_n=\alpha(t_{n-1/2})$}& \multicolumn{2}{c}{$\alpha_n=\alpha(t_{n})$}&\multicolumn{2}{c}{$\alpha_n=\alpha(t_{n-1})$}\\ \cmidrule{3-4}\cmidrule{5-6}\cmidrule{7-8}
$p$&$N$ & $\|e_h\|_{\hat{l}^{\infty}(L_2)}$&order&$\|e_h\|_{\hat{l}^{\infty}(L_2)}$& order& $\|e_h\|_{\hat{l}^{\infty}(L_2)}$ &order\\ \hline
 1 &8&  4.0859e-03 &&   3.2721e-03  &&  5.1924e-03& \\
  &16&  1.1291e-03 &1.8554&   1.0205e-03  &1.6809&  1.2823e-03&2.0177\\
  &32&  2.9485e-04 &1.9372&   2.8007e-04  &1.8654&  3.1634e-04& 2.0192 \\ 
\midrule
2 &8&4.0863e-03  && 3.2723e-03 &&  5.1929e-03&\\
  &16& 1.1294e-03  &1.8553& 1.0206e-03 &1.6808&  1.2827e-03&2.0174\\
 &32&  2.9497e-04 &1.9368&  2.8017e-04 &1.8651&  3.1649e-04&2.0189   \\
\bottomrule
\end{tabular*} 
\end{table*}

\begin{table*}[!t]%
\centering %
\caption{Errors and orders with  $(\alpha(0),\alpha(T))=(0.4,0.6)$, $N=512$ and $\alpha_n=\alpha(t_{n-1/2})$ in Example \ref{exp:72} }%
\label{tab:exmp722}
\begin{tabular*}{\textwidth}{@{\extracolsep\fill}llll||llll@{\extracolsep\fill}}
\toprule
$p$&$M$ & $\|e_h\|_{\hat{l}^{\infty}(L_2)}$&order&$p$&$M$&  $\|e_h\|_{\hat{l}^{\infty}(L_2)}$ &order\\ \hline
1 &8&  3.2719e-02 &&2&8&  3.7141e-03&\\
  &16& 7.1210e-03 &2.2000&& 16& 5.2535e-04&2.8217\\
 &32&  1.6920e-03 &2.0734&&32&  6.8064e-05&2.9483\\
  &64& 4.1622e-04  &2.0233&&64& 8.5914e-06&2.9859\\
\bottomrule
\end{tabular*} 
\end{table*}

From Table \ref{tab:exmp721}-Table \ref{tab:exmp722}, numerical results suggest a second-order accuracy with respect to temporal mesh, and we will work for a rigorous proof in the future.

\section{Conclusion}

In this paper, we develop a numerical scheme for solving variable-exponent subdiffusion equations by combining the $L2-1_{\sigma}$ temporal discretization with finite element spatial approximation. To handle the solution's initial singularity, we employ a graded temporal mesh. Our analysis reveals that at each $n$th time step, there exist multiple superconvergent points $t_{n-\kappa_n/2}$ in$[t_{n-1},t_n]$, where the parameter $\kappa_n$ only needs to satisfy $\alpha(t_{n-\kappa_n/2})\geq \kappa_n$. Notably, $\kappa_n=\min_{t\in [t_{n-1},t_n]}\alpha(t)$ automatically satisfies this condition, and when $\alpha(t)$ is monotonic over the interval, $\kappa_n$ can be determined immediately without additional computation - an improvement over previous results in \cite{DU20202952}. The proposed scheme is rigorously proved to be unconditionally stable and achieves a convergence rate of $O\left(N^{-\min\{r\delta,2\}}+h^{\mu}\right)$. Numerical experiments confirm these theoretical findings and demonstrate the method's effectiveness.

Numerical experiments suggest that the condition $\alpha(t_{n-\kappa_n/2}) \geq \kappa_n$ may admit further relaxation. In future work, we will rigorously analyze weaker assumptions for superconvergence and develop high-accuracy methods tailored to more complex variable-order time-fractional models, such as those involving nonlinear or multi-term dynamics.

%\backmatter
\section*{Author contributions}

Hongying Huang and Xiangcheng Zheng wrote the main manuscript. Huili Zhang implemented the numerical scheme, obtained the numerical experiments, and made revisions to the manuscript. All authors reviewed the manuscript.

\section*{Acknowledgments}
The authors are grateful to anonymous referees for their valuable comments
and suggestions, which helped to improve the article.

%\bibliography{wileyNJD-MPS}

%\bibliographystyle{wileyNJD-MPS}

 %\bibliography{HighOrder-ref}

\end{document}